\documentclass{article}

\usepackage{setspace} 
\usepackage[backend=biber, style=alphabetic, sorting=nyt, maxalphanames=99, minalphanames=99, maxbibnames=99]{biblatex}

\usepackage{amsmath}
\usepackage{amsthm}
\usepackage{amssymb}
\usepackage{enumitem}
\usepackage{tikz-cd}
\usepackage{mathtools}
\usepackage{xcolor}
\usepackage{import}
\usepackage{graphicx}
\usepackage{mathrsfs}
\usepackage[a4paper, portrait]{geometry}

\usepackage[colorlinks=true, linkcolor=blue]{hyperref}

\usepackage{cleveref}

\newtheorem{theorem}{Theorem}[section]
\newtheorem{lemma}[theorem]{Lemma}
\newtheorem{corollary}[theorem]{Corollary}
\newtheorem{proposition}[theorem]{Proposition}

\theoremstyle{definition}
\newtheorem{definition}[theorem]{Definition}

\newtheorem{remark}[theorem]{Remark}
\newtheorem{example}[theorem]{Example}

\crefname{lemma}{lemma}{lemmas}
\Crefname{lemma}{Lemma}{Lemmas}
\crefname{corollary}{corollary}{corollaries}
\Crefname{corollary}{Corollary}{Corollaries}
\crefname{proposition}{proposition}{propositions}
\Crefname{proposition}{Proposition}{Propositions}
\crefname{definition}{definition}{definitions}
\Crefname{definition}{Definition}{Definitions}
\crefname{notation}{notation}{notations}
\Crefname{notation}{Notation}{Notations}
\crefname{remark}{remark}{remarks}
\Crefname{remark}{Remark}{Remarks}
\crefname{example}{example}{examples}
\Crefname{example}{Example}{Examples}
\crefname{question}{question}{questions}
\Crefname{question}{Question}{Questions}

\newcommand{\cat}[1]{\mathcal{#1}}
\newcommand{\hocat}[1]{\cat{K}^{#1}}
\newcommand{\Hom}[1]{\mathrm{Hom}_{#1}}
\newcommand{\Ext}[1]{\mathbb{E}_{#1}}
\newcommand{\Extg}[1]{\mathrm{Ext}_{#1}}
\newcommand{\inflation}{\rightarrowtail}
\newcommand{\deflation}{\twoheadrightarrow}

\newcommand{\proj}{\mathrm{proj}}
\newcommand{\inj}{\mathrm{inj}}
\newcommand{\add}{\mathrm{add}}
\newcommand{\mo}{\mathrm{mod}\ }

\newcommand{\id}[1]{\mathrm{id}(#1)}
\newcommand{\cone}[1]{\mathrm{cone}(#1)}
\newcommand{\idd}[1]{\mathrm{idd}(#1)}

\newcommand{\pd}[1]{\mathrm{pd}(#1)}

\newcommand{\xinflation}[1]{\stackrel{#1}{\rightarrowtail}}
\newcommand{\xdeflation}[1]{\stackrel{#1}\twoheadrightarrow}
\newcommand{\mat}[1]{\left[\begin{smallmatrix}#1\end{smallmatrix}\right]}

\title{Extriangulated ideal quotients and \(d\)-Auslander categories}
\author{Lior Silberberg}
\date{}

\begin{document}

	\maketitle
	
	\begin{abstract}
		Building on recent studies of 0-Auslander categories, we establish a connection between \(d\)-Auslander extriangulated categories and categories of \((d+2)\)-term complexes up to homotopy. 
		We give a precise homological condition under which an algebraic extriangulated category admits an extriangulated ideal quotient equivalent to \(\mathcal{K}^{[-d-1,0]}(\mathcal{A})\). 
		We then demonstrate that \(d\)-cluster-tilting subcategories in triangulated categories serve as a key source of \(d\)-Auslander extriangulated categories. 
		Using these structural results, we answer a question posed by Iyama \cite[Appendix A]{Yang}  by proving that \(\mathcal{K}^{[-d-1,0]}(\mathcal{N})\) admits a triangulated structure when \(\mathcal{N}\) is a weakly idempotent complete algebraic \((d+4)\)-angulated category.
	\end{abstract}
	
	\section{Introduction}
	
	Extriangulated categories were introduced by Nakaoka and Palu in \cite{NP} as a simultaneous generalization of exact and of triangulated categories. In the recent paper \cite{FGPPP}, the authors introduce the notion of extriangulated ideal quotient, which, roughly speaking, is an additive quotient which does not alter the extriangulated structure. For any extriangulated category \(\cat{C}\), they consider the ideal \(J\) of morphisms which factor through some morphism \(f : I \rightarrow P\) with an injective domain and a projective codomain. They then show that the ideal quotient \(\cat{C}/J\) has a natural extriangulated structure, and that the quotient functor \(\cat{C} \rightarrow \cat{C}/{J}\) is an extriangulated ideal quotient. \par 
	Inspired by the results of \cite{BrustleYang}, they use this construction to study algebraic \(0\)-Auslander extriangulated categories, which are closely related to cluster categories. Their main result, which was also obtained by \cite{Chen} and \cite{Yang}, is the following.
	\begin{theorem}\label{theorem-FGPPP}
		Let \(\cat{C}\) be an algebraic \(0\)-Auslander extriangulated category. Then there exists an additive category \(\cat{A}\) and an equivalence of extriangulated categories
		\begin{align*}
			\cat{C}/{J} \simeq \hocat{[-1,0]}(\cat{A}),
		\end{align*}
		where \(\hocat{[-1,0]}(\cat{A})\) is the category of two term complexes in \(\cat{A}\) up to homotopy.
	\end{theorem}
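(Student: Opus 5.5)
Since \(\cat{C}\) is algebraic, I will realize it (up to equivalence of extriangulated categories) as an extension-closed subcategory of the stable category \(\underline{\cat{E}}\) of a Frobenius exact category \(\cat{E}\), or equivalently of \(H^0\) of an exact dg category. Then \(\mathbb{E}(X,Y)\simeq \underline{\cat{E}}(X,\Sigma Y)\) and the conflations are induced by triangles. Let \(\cat{P}\) be the subcategory of projectives of \(\cat{C}\) and take \(\cat{A}:=\cat{P}\). The \(0\)-Auslander hypothesis gives two facts. First, every \(X\in\cat{C}\) has a two-term projective resolution, i.e.\ a conflation \(P_1\inflation P_0\deflation X\) with \(P_i\in\cat{P}\). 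Second, every projective \(P\) admits a conflation \(P\inflation I^0\deflation I^1\) with \(I^0,I^1\) projective-injective. I then define a functor \(F\colon \cat{C}\to\hocat{[-1,0]}(\cat{P})\) by sending \(X\) to the chosen two-term complex \((P_1\to P_0)\). On morphisms, a map \(X\to X'\) lifts to a morphism of resolutions because \(P_0\) is projective. The lift is unique up to homotopy: a map of resolutions inducing zero on \(X\) factors so as to give a homotopy, using projectivity of \(P_0\) and the fact that \(\Hom{}(P_0,-)\) turns the conflation into an exact sequence.

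Next I would show that \(F\) is full, essentially surjective, and has kernel exactly \(J\). For essential surjectivity, a complex \(P_1\xrightarrow{f}P_0\) should be the resolution of the cone of \(f\) computed in \(\underline{\cat{E}}\). The key point is that this cone lies in \(\cat{C}\), and I would argue this via the \(0\)-Auslander condition: embed \(P_1\inflation I^0\), take the pushout along \(f\), and obtain a conflation \(P_0\inflation E\deflation I^1\). Then \(E\in\cat{C}\) by extension-closure, and one checks that \(E\) is resolved by \(P_1\to P_0\oplus I^0\), which is homotopy equivalent to \(f\) modulo the injective summand. This modification is exactly where the ideal \(J\) enters, since maps factoring through \(I\to P\) become null-homotopic. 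Fullness follows because \(\Hom{}(P_0,-)\) is exact on conflations, so a chain map between resolutions induces a morphism on cokernels. For the kernel, if \(F(g)\) is null-homotopic via \(h\colon P_0\to P_1'\), then \(g\) factors as \(X\to \Sigma P_1\to\cdots\). Here I would use the conflation \(P_1\inflation I^0\deflation I^1\) to rewrite the factorization through a morphism \(I^1\to P_0'\), injective to projective. This places \(g\) in \(J\). Conversely, maps in \(J\) go to zero because \(F(I)\) is the complex \(P\to I^0\) concentrated appropriately and \(\Hom{}\) into it from a projective is homotopically trivial.

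Finally I will check that the induced equivalence \(\cat{C}/J\to\hocat{[-1,0]}(\cat{P})\) is extriangulated. The extriangulated structure on \(\hocat{[-1,0]}(\cat{P})\) is the one inherited as an extension-closed subcategory of the triangulated category \(\hocat{b}(\cat{P})\). I would use the horseshoe lemma to send conflations in \(\cat{C}\) to componentwise-split sequences of resolutions, which give triangles in \(\hocat{b}(\cat{P})\). I would then compare \(\mathbb{E}\) on both sides through the isomorphism \(\mathbb{E}_{\cat{C}}(X,Y)\cong \Hom{\hocat{}}(FX,\Sigma FY)\). Both sides are computed as cokernels of \(\Hom{}(P_0,Y)\to\Hom{}(P_1,Y)\) after resolving \(Y\), and the equality uses the fact that injectives \(I\) satisfy \(\mathbb{E}(-,I)=0\).

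I expect the main obstacle to be the kernel identification, namely showing that \(\ker F\) is exactly \(J\) and not just the larger ideal of morphisms factoring through objects with projective resolutions of a special shape. The compatibility of the \(\mathbb{E}\)-bifunctors under the quotient also needs care. That is where the algebraic (dg) enhancement is genuinely needed, to have functorial cones that make the comparison natural.
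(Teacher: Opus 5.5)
\textbf{There is a genuine gap: the choice \(\cat{A}:=\cat{P}\) is wrong, and the functor \(F\) is not well defined as you construct it.}

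\textbf{First problem: the category \(\cat{A}\).} With \(\cat{A}=\cat{P}\) the claimed equivalence is false. The category is forced to be \(\cat{P}/[\cat{Q}]\), where \(\cat{Q}\) is the subcategory of projective-injectives. For \(Q\in\cat{Q}\), \(\mathrm{id}_Q\) factors through \(Q\to Q\), which has injective domain and projective codomain, so \(\mathrm{id}_Q\in J\). Yet your \(F(Q)=(0\to Q)\neq 0\). More generally, \(J(P,P')=[\cat{Q}](P,P')\) for \(P,P'\in\cat{P}\), because any map \(P\to I\) extends along \(P\inflation Q\).

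A concrete counterexample is \(\cat{C}=\mathrm{mod}\,kA_2\) for the quiver \(1\to 2\). It is exact and hereditary, and \(P_2\inflation P_1\deflation S_1\) gives dominant dimension \(\geq 1\), so it is \(0\)-Auslander. Here \(J=[P_1]\), and \(\cat{C}/J\) has two indecomposables, \(S_1\) and \(S_2\). By contrast, \(\hocat{[-1,0]}(\proj(kA_2))\) has five: \(P_1,P_2,\Sigma P_1,\Sigma P_2\) and \(P_2\to P_1\). In this example your \(F\) is just the fully faithful embedding, with kernel \(0\).

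The defect is already visible in your essential-surjectivity step. The complex \((P_1\to P_0\oplus I^0)\) is \(\Sigma^{-1}\) of the cone of a map \((P_1\xrightarrow{f}P_0)\to\Sigma I^0\). It is not homotopy equivalent to \((P_1\xrightarrow{f}P_0)\) in \(\hocat{}(\cat{P})\). The two only become isomorphic once \(I^0\) is killed, i.e.\ in \(\hocat{-}(\cat{P})/\hocat{b}(\cat{Q})\) or in \(\hocat{}(\cat{P}/[\cat{Q}])\). This is why the paper composes with \(\rho\colon\hocat{-}(\cat{P})/\hocat{b}(\cat{Q})\to\hocat{-}(\cat{P}/[\cat{Q}])\). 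It is also why its kernel computation must first handle chain maps whose components lie only in \([\cat{Q}]\) (\Cref{lemma-only-zero}), before factoring through the cokernel \(E\) of \(K\inflation Q\), which is injective when \(d=0\).

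A smaller slip: dominant dimension gives \(P\inflation I^0\deflation I^1\) with \(I^1\) only injective. If \(I^1\) were projective, this conflation would split.

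\textbf{Second problem: well-definedness of \(F\).} Your uniqueness-up-to-homotopy argument needs the inflation \(P_1'\inflation P_0'\) to be a monomorphism. That holds in exact categories but not in extriangulated ones.

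Take \(\cat{C}=\hocat{[-1,0]}(\add R)\) with \(R=k[x]/(x^2)\); it is algebraic and \(0\)-Auslander with \(\cat{Q}=0=J\). Let \(X=(R\xrightarrow{x}R)\), resolved by the conflation \(R\xinflation{x}R\deflation X\). The pair \((0,x)\) is a chain map lifting \(0_X\), because its degree-\(0\) component is the inflation itself. It is even a morphism of \(\mathbb{E}\)-triangles. But it is not null-homotopic: that would require \(h\in R\) with \(hx=0\) and \(xh=x\), which is impossible since \(R\) is commutative. So choosing resolutions inside \(\cat{C}\) and lifting does not produce a functor, even here where \(\cat{A}=\cat{P}\) happens to be correct.

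The paper avoids this by resolving in an exact model \(\cat{E}\) with \(\cat{C}\simeq\cat{E}/[\cat{Q}_0]\), where resolutions are functorial up to homotopy. It then proves that \(\cat{E}\to\hocat{-}(\cat{P})/\hocat{b}(\cat{Q}_0)\) is full with kernel exactly \([\cat{Q}_0]\) (\Cref{lemma-kernel,lemma-full}). The enhancement you invoke at the end has to enter already in the definition of \(F\).

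With both repairs, your pushout trick and your factorization through an injective-to-projective morphism are essentially the \(d=0\) case of the paper's argument.
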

	It is thus natural to study the connection between categories of \((d+2)\)-term complexes up to homotopy and (\(d\)-Auslander) extriangulated categories. Using an explicit embedding of algebraic extriangulated categories inside a triangulated category, we can give a precise relation between the aforementioned notions.
	\begin{theorem}[\Cref{theorem-ideal-quotients}]
		\label{theorem-introduction}
		Let \(\cat{C}\) be an algebraic extriangulated category. Denote by \(\cat{P}\), \(\cat{I}\) and \(\cat{Q}\) its subcategories of projective, injective, and projective-injective objects, respectively. The following statements are equivalent:
		\begin{enumerate}
			\item \(\cat{C}\) admits an extriangulated ideal quotient which is equivalent as an extriangulated category to \(\hocat{[-d-1,0]}(\cat{A})\) for some additive category \(\cat{A}\).
			\item \(\cat{C}\) is \(d\)-Auslander and satisfies \(\Ext{\cat{C}}^k(I,P)=0\) for any \(1\leq k \leq d\), \(I \in \cat{I}\) and \(P \in \cat{P}\).
		\end{enumerate}
		When these equivalent conditions hold, we have \(\cat{A} \simeq \cat{P}/[\cat{Q}]\) and \(\cat{C}/[\cat{I}\rightarrow \cat{P}] \simeq \hocat{[-d-1,0]}(\cat{A})\). 
	\end{theorem}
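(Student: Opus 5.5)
The plan is to prove both implications by working inside a triangulated envelope. Since \(\cat{C}\) is algebraic, I would first embed it, as an extension-closed subcategory, into an algebraic triangulated category \(\cat{T}\). The natural choice is the homotopy category of a suitable dg enhancement, arranged so that \(\cat{P}\) and \(\cat{I}\) become explicit and \(\Ext{\cat{C}}^k(X,Y)=\Hom{\cat{T}}(X,Y[k])\) for \(X,Y\in\cat{C}\). Recall that \(d\)-Auslander means: every object admits a conflation-resolution by projectives of length \(d+1\), i.e.\ \(\mathrm{pd}\le d+1\); every projective has a coresolution of length \(d+1\) by projective-injectives; and the dual ``dominant dimension'' condition holds. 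The proof splits into three steps.

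\textbf{(1)\(\Rightarrow\)(2).} Extriangulated ideal quotients preserve \(\Ext{}\) and the extriangulated structure, so they preserve projectives, injectives and higher extensions. Hence it suffices to check the conditions in \(\hocat{[-d-1,0]}(\cat{A})\) and then lift them along the quotient functor, using that the kernel ideal consists of morphisms factoring through \(\cat{I}\to\cat{P}\).

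In \(\hocat{[-d-1,0]}(\cat{A})\), viewed as an extension-closed subcategory of \(\hocat{b}(\cat{A})\):
\begin{enumerate}
\item The projectives are the complexes concentrated in degree \(0\).
\item The injectives are those concentrated in degree \(-d-1\).
\item The projective-injectives are zero.
\end{enumerate}
The \(d\)-Auslander conditions then follow from the brutal truncation filtration of a complex, which yields a resolution by stalk complexes. The vanishing \(\Ext{}^k(I,P)=\Hom{}(A[d+1],B[k])=0\) for \(1\le k\le d\) is a direct degree count.

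\textbf{(2)\(\Rightarrow\)(1).} Set \(\cat{A}=\cat{P}/[\cat{Q}]\). I would construct a functor \(F:\cat{C}\to\hocat{[-d-1,0]}(\cat{A})\) by sending \(X\) to a chosen projective resolution \(P_{d+1}\to\dots\to P_0\to X\), regarded as a complex in \(\cat{A}\) in degrees \([-d-1,0]\). The key steps, in order, are:
\begin{enumerate}
\item Show \(F\) is well defined and full, by comparison of resolutions.
\item Show \(F\) is essentially surjective. For a complex \(P_\bullet\) of projectives, use the coresolution of each \(P_i\) by \(\cat{Q}\) to realize its differentials as iterated cones in \(\cat{T}\). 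The dominant-dimension condition makes these cones land in \(\cat{C}\) and makes the differentials well defined modulo \([\cat{Q}]\).
\item Identify the kernel of \(F\) with \([\cat{I}\to\cat{P}]\), using that injectives are exactly the objects whose resolution is killed modulo \(\cat{Q}\).
\item Show that \(F\) sends \(\Ext{}\) isomorphically, so that by the earlier result of \cite{FGPPP} on quotients by \(J\) it becomes an equivalence of extriangulated categories.
\end{enumerate}

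The main obstacle will be the kernel and faithfulness computation. This is exactly where the hypothesis \(\Ext{}^k(I,P)=0\) for \(1\le k\le d\) should enter. A morphism \(X\to Y\) that vanishes in the homotopy category of truncated resolutions should factor through \(X\to I\to P\to Y\), and building this factorization requires lifting homotopies across the \(d\) intermediate degrees. Each lifting step is obstructed by a class in \(\Ext{}^k(I,P)\).

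I would first try to prove this by induction on the degree, working in \(\cat{T}\) with the octahedral axiom. The final identifications \(\cat{A}\simeq\cat{P}/[\cat{Q}]\) and \(\cat{C}/[\cat{I}\to\cat{P}]\simeq\hocat{[-d-1,0]}(\cat{A})\) then come out of the construction.
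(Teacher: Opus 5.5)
Your architecture matches the paper's. You pass to the triangulated hull $\cat{D}=\hocat{-}(\cat{P})/\hocat{b}(\cat{Q})$ of an exact model, send an object to its projective resolution reduced modulo $[\cat{Q}]$, and aim to show that the resulting functor $F$ to $\hocat{[-d-1,0]}(\cat{P}/[\cat{Q}])$ is full, essentially surjective, and has kernel $[\cat{I}\rightarrow\cat{P}]$. Your (1)$\Rightarrow$(2) is also essentially the paper's. Your essential-surjectivity sketch can be made precise: dominant dimension puts every $\Sigma^jP$, $0\le j\le d+1$, in the image, and that image is extension-closed by \Cref{proposition-embedding}, so brutal truncation finishes.

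The gap is the inclusion $\ker F\subseteq[\cat{I}\rightarrow\cat{P}]$, which is where the theorem actually lives. You only assert that each lifting step is obstructed by a class in $\Ext{\cat{C}}^k(I,P)$ and propose an induction using the octahedral axiom, with no mechanism. The one justification you give is false: injectives are not the objects whose resolution dies modulo $\cat{Q}$. Take the dominant-dimension coresolution $P\inflation Q_1\deflation X_2,\ \dots,\ X_{d+1}\inflation Q_{d+1}\deflation X_{d+2}$ of a projective $P$. Then $X_{d+2}$ is injective with resolution $P\to Q_1\to\cdots\to Q_{d+1}$. So $F(X_{d+2})$ is the stalk complex $\bar P$ in degree $-d-1$, which is nonzero unless $1_P\in[\cat{Q}]$. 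Degree reasons only show that $F$ kills each morphism $I\to P$, which is the easy inclusion.

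The missing idea is \Cref{lemma-hom-to-projective}.
\begin{enumerate}
\item Inside $\cat{D}$ one has $\Ext{\cat{C}}^k(X,Y)\cong\Hom{\cat{D}}(X,\Sigma^kY)$, and $\Sigma^{d+1}P$ is injective. So the hypothesis gives $\Hom{\cat{D}}(\Sigma^jP,P')=0$ for $1\le j\le d$.
\item Induct on $\pd{M}$ along the conflation $\tau_{-l+1}\tilde P^\bullet\inflation\tilde P^\bullet\deflation\Sigma^l\tilde P^{-l}$. This shows that, for $\id{M}\le d$, every morphism $M\to P'$ into a projective factors through $\cat{Q}$ when $\pd{M}\le d$, and through $\cat{I}$ when $\pd{M}=d+1$. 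The obstruction term $\Hom{\cat{D}}(\Sigma^l\tilde P^{-l},P')$ vanishes for $l\le d$. For $l=d+1$, the resulting surjection forces every map to factor through the injective $\Sigma^{d+1}\tilde P^{-d-1}$.
\item Now take $f\in\ker F$. After a homotopy, a chain map lifting $f$ has all components in $[\cat{Q}]$. \Cref{lemma-only-zero} homotopes it, degree by degree, to one concentrated in degree $0$, applying the lemma to the cokernels of $C^{n-1}\inflation Q^n$.
\item Finally, $f$ factors through the cokernel $E$ of $\Omega M\inflation P^0\inflation Q$, where $P^0\inflation Q\to\tilde P^0$ factors $f^0$. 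Since $\id{E}\le d$, the lemma puts $E\to\tilde P^0$ in $[\cat{I}]$, so $f\in[\cat{I}\rightarrow\cat{P}]$.
\end{enumerate}

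Separately, the claim $\cat{A}\simeq\cat{P}/[\cat{Q}]$ for an arbitrary $\cat{A}$ as in (1) does not come out of your construction. You need to factor the given quotient $\pi$ through $\cat{C}/[\cat{I}\rightarrow\cat{P}]\simeq\hocat{[-d-1,0]}(\cat{P}/[\cat{Q}])$ and then compare $\mathbb{E}(\Sigma P,P')\cong\mathrm{Hom}(P,P')$ on both sides. That factorization needs only $[\cat{I}\rightarrow\cat{P}]\subseteq\ker\pi$. The equality $\ker\pi=[\cat{I}\rightarrow\cat{P}]$ that you invoke in (1)$\Rightarrow$(2) is unjustified and not needed.
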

	Clearly, this theorem specializes to \Cref{theorem-FGPPP} in the case \(d=0\). From another point of view, \Cref{theorem-introduction} can also be seen as a complete homological characterization of truncated homotopy categories among the class of algebraic extriangulated categories. \par
	One source of \(d\)-Auslander extriangulated categories is cluster-tilting subcategories in triangulated categories. Namely, If \(\cat{M}\) is a \((d+2)\)-cluster-tilting subcategory of a triangulated category \(\cat{T}\), we can consider the relative extriangulated structure on \(\cat{T}\) which makes \(\cat{M}\) the subcategory of projective objects. Denote by \(\cat{T}_\cat{M}\) the category \(\cat{T}\) with this extriangulated structure. We show that \(\cat{T}_\cat{M}\) is \(d\)-Auslander. Applying \Cref{theorem-introduction} in this setting, we show:
	\begin{theorem}[\Cref{corollary-CT-vosnex,corollary-dZCT}]
		\label{theorem-intro-triangulated}
		Let \(\cat{T}\) be an algebraic triangulated category, and let \(\cat{M} \subseteq \cat{T}\) be a \((d+2)\)-cluster-tilting subcategory. 
		\begin{enumerate}
			\item If \(\cat{M}\) satisfies the vosnex property, i.e. \(\Hom{\cat{T}}(\cat{M},\Sigma^{-i}\cat{M}) = 0\) for all \(1 \leq i \leq d\), then there is an equivalence of extriangulated categories
			\begin{align*}
				\cat{T}_\cat{M}/[\Sigma^{d+1}\cat{M} \rightarrow \cat{M}] \simeq \hocat{[-d-1,0]}(\cat{M}).
			\end{align*}
			\item If \(\cat{M}\) is \((d+2)\mathbb{Z}\)-cluster-tilting, i.e. \(\Sigma^{d+2}\cat{M} = \cat{M}\), then there is an equivalence of extriangulated categories
			\begin{align*}
				\cat{T}_\cat{M} \simeq \hocat{[-d-1,0]}(\cat{M}).
			\end{align*}
		\end{enumerate}
	\end{theorem}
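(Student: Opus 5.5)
The plan is to deduce both parts from \Cref{theorem-introduction}, applied to \(\cat{C}=\cat{T}_\cat{M}\). First I will identify the projective, injective and projective-injective objects of \(\cat{T}_\cat{M}\). The relative structure is given by \(\Ext{}^{}_\cat{M}(X,Y)=\{\delta\in\Hom{\cat{T}}(X,\Sigma Y) : \delta\circ m = 0 \text{ for all } m\colon M\to X,\ M\in\cat{M}\}\). From this, the projectives are exactly \(\cat{P}=\cat{M}\). For the injectives, a triangle \(Y\to Z\to X\xrightarrow{\delta}\Sigma Y\) is an \(\Ext{}^{}_\cat{M}\)-triangle iff \(\Hom{}(\cat{M},-)\) applied to it gives a short exact sequence. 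A standard argument with cluster tilting (via \(\Sigma^{-1}\) of the approximations) should then identify \(\cat{I}=\Sigma^{d+1}\cat{M}\) as the relative injectives: \(\Hom{}(\cat{M},\Sigma^{d+1}\cat{M}\cdot\Sigma)=\Hom{}(\cat{M},\Sigma^{d+2}\cat{M})\)-type vanishing has to be adjusted to the relative setting. It is also clear that \(\cat{T}_\cat{M}\) is algebraic, since relative structures on an algebraic triangulated category are algebraic, being substructures of the exact-category-induced one.

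Next I will show that \(\cat{T}_\cat{M}\) is \(d\)-Auslander. For every \(X\in\cat{T}\), the \((d+2)\)-cluster-tilting property gives a tower of triangles \(X_{i+1}\to M_i\to X_i\to\Sigma X_{i+1}\) with \(M_i\in\cat{M}\) right approximations and \(X_0=X\), terminating with \(X_{d+1}\in\cat{M}\). Each of these triangles is an \(\Ext{}^{}_\cat{M}\)-triangle, so \(\mathrm{pd}\,X\le d+1\). Dually, using left \(\cat{M}\)-approximations shifted appropriately, every projective \(M\) admits an injective coresolution whose first \(d+1\) terms are projective-injective. I expect this dominant-dimension half, \(\mathrm{dom.dim}\ge d+1\), to be the main obstacle. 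My first attempt would be a \(\Sigma^{d+1}\)-twisted coresolution \(M\to \Sigma^{d+1}M'\), combined with the cluster-tilting vanishing \(\Hom{}(\cat{M},\Sigma^{i}\cat{M})=0\) for \(1\le i\le d+1\), to verify that the intermediate terms lie in \(\cat{Q}=\cat{M}\cap\Sigma^{d+1}\cat{M}\).

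With \(\cat{T}_\cat{M}\) established as \(d\)-Auslander, the remaining step is the condition \(\Ext{}^k_{\cat{M}}(I,P)=0\) for \(1\le k\le d\), \(I\in\Sigma^{d+1}\cat{M}\), \(P\in\cat{M}\). Higher extensions in \(\cat{T}_\cat{M}\) are computed through projective resolutions. Dimension shifting should therefore reduce this to \(\Hom{\cat{T}}(\Sigma^{d+1}\cat{M},\Sigma^{k}\cat{M})=\Hom{}(\cat{M},\Sigma^{-(d+1-k)}\cat{M})\), with \(d+1-k\) ranging over \(1,\dots,d\). This is exactly the vosnex hypothesis in part 1. \Cref{theorem-introduction} then yields \(\cat{T}_\cat{M}/[\Sigma^{d+1}\cat{M}\to\cat{M}]\simeq\hocat{[-d-1,0]}(\cat{P}/[\cat{Q}])\). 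To finish part 1, I must check that \([\cat{Q}]\) vanishes on \(\cat{M}\). I plan to show \(\cat{Q}=\cat{M}\cap\Sigma^{d+1}\cat{M}=0\): any \(Q=\Sigma^{d+1}M\) lies in \(\cat{M}\), so \(\mathrm{id}_Q\in\Hom{}(\cat{M},\Sigma^{d+1}\cat{M})\), and this group vanishes by cluster tilting.

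For part 2, \(\Sigma^{d+2}\cat{M}=\cat{M}\) gives \(\Hom{}(\cat{M},\Sigma^{-i}\cat{M})=\Hom{}(\cat{M},\Sigma^{d+2-i}\cat{M})=0\) for \(1\le i\le d\), so vosnex holds and part 1 applies. It then remains to see that the ideal \([\Sigma^{d+1}\cat{M}\to\cat{M}]\) is zero. This holds because \(\Hom{}(\Sigma^{d+1}\cat{M},\cat{M})=\Hom{}(\Sigma^{-1}\cat{M},\cat{M})=\Hom{}(\cat{M},\Sigma\cat{M})=0\), so the quotient functor is the identity.
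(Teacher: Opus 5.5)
Your route is the paper's: show \(\cat{T}_\cat{M}\) is an algebraic \(d\)-Auslander category with \(\cat{P}=\cat{M}\) and \(\cat{I}=\Sigma^{d+1}\cat{M}\), convert \(\Ext{\cat{M}}^k(\cat{I},\cat{P})=0\) into vosnex by dimension shifting, apply \Cref{theorem-introduction}, and for part 2 note that vosnex is automatic and \(\Hom{\cat{T}}(\Sigma^{d+1}\cat{M},\cat{M})=0\). Your Ext computation, your proof that \(\cat{Q}=0\), and part 2 are correct. The proof of \(\cat{Q}=0\) is more explicit than the paper, which uses \(\cat{P}/[\cat{Q}]=\cat{M}\) silently.

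The step you leave open, dominant dimension, is a genuine gap as written. Your proposed coresolution starting with \(M\to\Sigma^{d+1}M'\) cannot work, because every such map is zero by cluster tilting. The missing idea is one you already have. Since \(\cat{Q}=0\), the required conflations must have zero middle term, namely \(\Sigma^iM\inflation 0\deflation\Sigma^{i+1}M\) for \(0\le i\le d\). Each is a conflation of \(\cat{T}_\cat{M}\): the extension \(\mathrm{id}_{\Sigma^{i+1}M}\) kills every morphism from \(\cat{M}\), because \(\Hom{\cat{T}}(\cat{M},\Sigma^{i+1}\cat{M})=0\) for \(1\le i+1\le d+1\). The chain ends at \(\Sigma^{d+1}M\), which is injective. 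This is exactly the paper's argument, phrased there as the resolution of \(\Sigma^{d+1}M\) having \(M_k=0\). The same conflations also supply the syzygies \(\Omega\Sigma^{j}M\simeq\Sigma^{j-1}M\) that your dimension shift implicitly uses.

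The identification \(\cat{I}=\Sigma^{d+1}\cat{M}\) is only gestured at, and the criterion you state is wrong. A triangle \(Y\to Z\to X\xrightarrow{\delta}\Sigma Y\) is an \(\Ext{\cat{M}}\)-triangle iff \(\Hom{\cat{T}}(\cat{M},Z)\to\Hom{\cat{T}}(\cat{M},X)\) is surjective. It is not equivalent to \(\Hom{\cat{T}}(\cat{M},-)\) giving a short exact sequence: \(M\inflation 0\deflation\Sigma M\) is a conflation, but \(\Hom{\cat{T}}(M,M)\to 0\) is not injective.

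Here is what is actually needed. If \(\delta\) vanishes on \(\cat{M}\), it factors through \(\Sigma\cat{M}\ast\cdots\ast\Sigma^{d+1}\cat{M}\), by the corollary of Iyama--Yoshino in the preliminaries. So \(\Sigma^{-1}\delta\) factors through \(\cat{M}\ast\cdots\ast\Sigma^{d}\cat{M}\), which admits no nonzero maps to \(\Sigma^{d+1}\cat{M}\). Hence every map \(Y\to\Sigma^{d+1}M\) extends along \(Y\to Z\), so \(\Sigma^{d+1}\cat{M}\subseteq\cat{I}\).

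Conversely, an injective \(I\) sits in a triangle \(X\to I\to\Sigma^{d+1}M\to\Sigma X\) with \(X\in\cat{M}\ast\cdots\ast\Sigma^{d}\cat{M}\). Since \(X\inflation 0\deflation\Sigma X\) is a conflation, the map \(X\to I\) is zero. Therefore \(I\) is a direct summand of \(\Sigma^{d+1}M\).

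With these two steps supplied, your proposal coincides with the paper's proof.
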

	Combining \Cref{theorem-intro-triangulated} with Theorem 7.5 of \cite{Kvamme}, we obtain the following corollary.
	\begin{corollary}[\Cref{cor-angulated}]
		\label{cor-intro}
		Let \(\cat{N}\) be a weakly idempotent complete algebraic \((d+4)\)-angulated category. Then \(\hocat{[-d-1,0]}(\cat{N})\) has a structure of a triangulated category. 
	\end{corollary}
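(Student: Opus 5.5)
The corollary will follow by combining Kvamme's Theorem 7.5 with part (2) of \Cref{theorem-intro-triangulated}, and then checking that the extriangulated structure obtained is in fact triangulated.

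\textbf{Step 1: realize $\cat{N}$ inside a triangulated category.} By \cite[Theorem 7.5]{Kvamme}, a weakly idempotent complete algebraic $(d+4)$-angulated category $\cat{N}$ embeds as a $(d+2)\mathbb{Z}$-cluster-tilting subcategory $\cat{M}\simeq\cat{N}$ of an algebraic triangulated category $\cat{T}$. This is the standard $n$-angulated/$(n-2)\mathbb{Z}$-cluster-tilting correspondence with $n=d+4$. Under it, $\Sigma^{d+2}\cat{M}=\cat{M}$, and $\Sigma^{d+2}|_{\cat{M}}$ recovers the $(d+4)$-suspension of $\cat{N}$. I would only need to check that the hypotheses there match ours (algebraic, weakly idempotent complete) and that the conclusion gives an algebraic $\cat{T}$, so that \Cref{corollary-dZCT} applies.

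\textbf{Step 2: apply \Cref{theorem-intro-triangulated}(2).} This gives an equivalence of extriangulated categories
\begin{align*}
\cat{T}_\cat{M}\simeq\hocat{[-d-1,0]}(\cat{M})\simeq\hocat{[-d-1,0]}(\cat{N}).
\end{align*}
It therefore suffices to show that $\cat{T}_\cat{M}$ is triangulated. Since an extriangulated category is triangulated exactly when every object is projective and injective, the goal is that the projectives and injectives of $\cat{T}_\cat{M}$ are trivial.

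\textbf{Step 3 (main obstacle).} Here the plan needs care. In $\cat{T}_\cat{M}$ the projectives are $\cat{M}$ and the injectives are $\Sigma^{d+1}\cat{M}$, which are nonzero, so $\cat{T}_\cat{M}$ itself is \emph{not} triangulated. The correct reading of the corollary is that $\hocat{[-d-1,0]}(\cat{N})$ carries \emph{some} triangulated structure, not necessarily the natural extriangulated one. I would obtain it in one of two ways.

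\emph{Option (a).} Use the additive equivalence of Step 2 together with the fact that $\cat{T}_\cat{M}$ has the same underlying additive category as $\cat{T}$. Then $\hocat{[-d-1,0]}(\cat{N})$ is equivalent as an additive category to $\cat{T}$, and the triangulated structure of $\cat{T}$ transports across this equivalence.

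\emph{Option (b).} Identify $\cat{T}$ with $\cat{T}_\cat{M}$ on objects and morphisms, and note that \Cref{theorem-introduction} gives $\cat{A}\simeq\cat{P}/[\cat{Q}]$ with $\cat{Q}=0$ here. Hence $\cat{A}=\cat{M}$, with no ideal quotient taken.

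I expect the main subtlety to be confirming that the equivalence in \Cref{corollary-dZCT} is an equivalence of underlying additive categories with no quotient. This holds because the $(d+2)\mathbb{Z}$ condition forces $[\cat{I}\to\cat{P}]=[\Sigma^{d+1}\cat{M}\to\cat{M}]$ to vanish: since $\Sigma^{d+1}\cat{M}=\Sigma^{-1}\cat{M}$, we get $\Hom{\cat{T}}(\Sigma^{-1}\cat{M},\cat{M})=\Hom{\cat{T}}(\cat{M},\Sigma\cat{M})=0$ by cluster tilting. With that, Option (a) finishes the proof.
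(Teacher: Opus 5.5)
Your proposal is correct and follows the paper's own proof. Kvamme's Theorem 7.5 realizes \(\cat{N}\) as a \((d+2)\mathbb{Z}\)-cluster-tilting subcategory \(\cat{M}\) of an algebraic triangulated \(\cat{T}\). \Cref{corollary-dZCT} then gives \(\cat{T}_\cat{M}\simeq\hocat{[-d-1,0]}(\cat{M})\), whose vanishing of \([\Sigma^{d+1}\cat{M}\rightarrow\cat{M}]\) is exactly what you check. Finally, the triangulated structure of the underlying additive category \(\cat{T}\) is transported along this equivalence, which is your Option (a).

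One harmless slip: "every object projective and injective" describes the split case \(\mathbb{E}=0\), not the triangulated case. The triangulated criterion is that the only projectives and injectives are zero, and this is the version you actually go on to use.
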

	In the case \(d=0\), this gives an answer to a question of Iyama, which appears in \cite[Appendix A]{Yang} . We also give a generalization of Theorem A.1 of \cite[Appendix A]{Yang} . \par
	We also apply \Cref{theorem-introduction} in the abelian setting. In particular, we recover Theorem 3.1 of \cite{GW}. \par
	During the finalization of this paper, we learned that Zhaotai Zhang, Yu Zhou, and Bin Zhu independently proved related results in \cite{ZZZ}. 
	
	\subsection*{Structure of the paper}
	
	In \Cref{preliminaries}, we give necessary background, as well as a direct and explicit construction of a triangulated hull of an extriangulated category with enough projectives. \Cref{d-Auslander} is devoted to studying the homological properties of algebraic \(d\)-Auslander extriangulated categories. In \Cref{Main}, we give the proof of \Cref{theorem-introduction}. In \Cref{triangulated}, we apply the results of \Cref{Main} to study \(d\)-Auslander extriangulated categories which come from triangulated categories with cluster-tilting subcategories. In particular, we prove \Cref{theorem-intro-triangulated} and \Cref{cor-intro}. Finally, \Cref{higher} applies the results of \Cref{Main} in the abelian setting. 
	
	\subsection*{Acknowledgments}

	I would like to thank my advisors Yann Palu and David Hernandez for their guidance and support. This project has received funding from the European Union’s Horizon Europe research and innovation programme under the Marie Skłodowska-Curie grant agreement n° 101126554. \par 
	Co-Funded by the European Union. Views and opinions expressed are however those of the author only and do not necessarily reflect those of the European Union. Neither the European Union nor the granting authority can be held responsible for them. \def\svgwidth{3.6em}
\begingroup%
  \makeatletter%
  \providecommand\color[2][]{%
    \errmessage{(Inkscape) Color is used for the text in Inkscape, but the package 'color.sty' is not loaded}%
    \renewcommand\color[2][]{}%
  }%
  \providecommand\transparent[1]{%
    \errmessage{(Inkscape) Transparency is used (non-zero) for the text in Inkscape, but the package 'transparent.sty' is not loaded}%
    \renewcommand\transparent[1]{}%
  }%
  \providecommand\rotatebox[2]{#2}%
  \newcommand*\fsize{\dimexpr\f@size pt\relax}%
  \newcommand*\lineheight[1]{\fontsize{\fsize}{#1\fsize}\selectfont}%
  \ifx\svgwidth\undefined%
    \setlength{\unitlength}{2167.83728027bp}%
    \ifx\svgscale\undefined%
      \relax%
    \else%
      \setlength{\unitlength}{\unitlength * \real{\svgscale}}%
    \fi%
  \else%
    \setlength{\unitlength}{\svgwidth}%
  \fi%
  \global\let\svgwidth\undefined%
  \global\let\svgscale\undefined%
  \makeatother%
  \begin{picture}(1,0.18682214)%
    \lineheight{1}%
    \setlength\tabcolsep{0pt}%
    \put(0,0){\includegraphics[width=\unitlength,page=1]{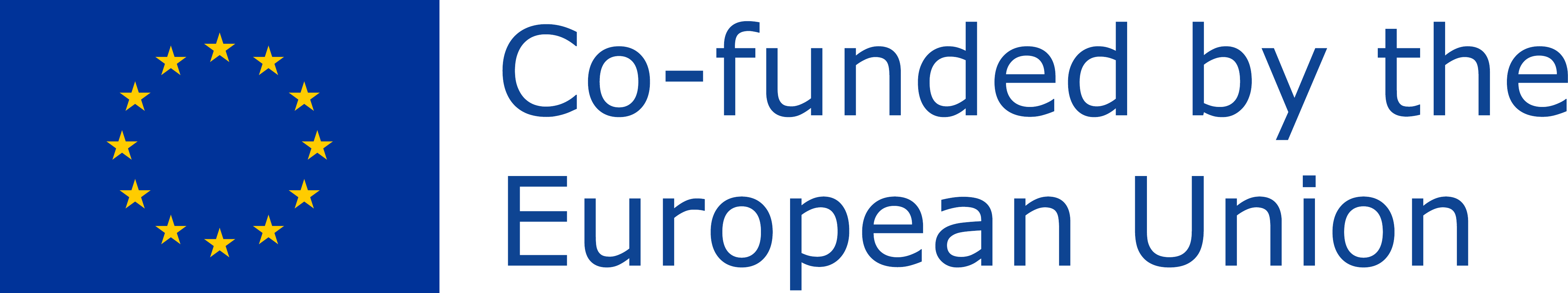}}%
  \end{picture}%
\endgroup%

	\section{Preliminaries}\label{preliminaries}
	
	\subsection{Extriangulated notions}
	
	We assume the reader is familiar with extriangulated categories. More details can be found in \cite{NP}.
	\begin{definition}
		An extriangulated functor between two extriangulated categories \((\cat{C}_1,\mathbb{E}_1,\mathfrak{s}_1)\) and \((\cat{C}_2,\mathbb{E}_2,\mathfrak{s}_2)\) is an additive functor \(F : \cat{C}_1 \rightarrow \cat{C}_2\) together with a natural transformation \(\alpha : \mathbb{E}_1 \rightarrow \mathbb{E}_2 \circ (F^{\mathrm{op}}\times F)\), satisfying \(\mathfrak{s}_2(\alpha(\delta)) = F(\mathfrak{s}_1(\delta))\).
	\end{definition}
	\begin{definition}
		Let \((\cat{C}_1,\mathbb{E}_1,\mathfrak{s}_1)\) and \((\cat{C}_2,\mathbb{E}_2,\mathfrak{s}_2)\) be extriangulated categories. The extriangulated functor \((F,\alpha)\) is an extriangulated ideal quotient if \(F: \cat{C}_1 \rightarrow \cat{C}_2\) is an ideal quotient and \(\alpha\) is the identity. 
	\end{definition}
	\begin{example}
		Let \(\cat{C}\) be an extriangulated category, and let \(\cat{Q}\) be a subcategory of projective-injective objects. Then \(\cat{C}/[\cat{Q}]\) is extriangulated, and the quotient functor \(\cat{C} \rightarrow \cat{C}/[\cat{Q}]\) is an extriangulated ideal quotient. 
	\end{example}
	We will often only write \(\cat{C}\) for an extriangulated category, and \(F:\cat{C}_1 \rightarrow \cat{C}_2\) for an extriangulated functor. 
	\begin{theorem}[Theorem 2.8 in \cite{FGPPP}, Proposition 3.16 \cite{ChenDG}]
		Let \(\cat{C}\) be an extriangulated category. Let \(J_0\) be a class of morphisms with an injective domain and a projective codomain, and let \(J\) be the ideal generated by \(J_0\). The ideal quotient \(\cat{C}/J\) has a natural extriangulated structure. With this extriangulated structure, the quotient functor \(\cat{C} \rightarrow \cat{C}/J\) is an extriangulated ideal quotient. 
	\end{theorem}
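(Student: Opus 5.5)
We define the structure on \(\cat{C}/J\) by keeping the same bifunctor. That is, we set \(\overline{\mathbb{E}}(A,C) := \mathbb{E}(A,C)\), and we let \(\overline{\mathfrak{s}}(\delta)\) be the image in \(\cat{C}/J\) of the class \(\mathfrak{s}(\delta)\). With this choice the quotient functor, paired with the identity transformation, is automatically an extriangulated ideal quotient once the axioms are verified.

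\textbf{Step 1: the bifunctor descends.} We need \(\mathbb{E}\) to descend to \((\cat{C}/J)^{\mathrm{op}}\times \cat{C}/J\). This means every \(j\in J\) must act as zero on \(\mathbb{E}\) from both sides. It suffices to check this for generators. Let \(f\colon I\to P\) be in \(J_0\) and consider \(g f h\). Then \((gfh)_*\delta = g_* f_* h_*\delta\), and \(f_*(h_*\delta)\) lies in \(\mathbb{E}(X,P)\) pulled from \(\mathbb{E}(X,I)\).
\begin{itemize}
\item For the covariant action, \(f_* = f_*\circ \mathrm{id}_{I*}\). Since \(I\) is injective, \(\mathbb{E}(X,I)=0\), so \(f_*=0\).
\item Dually, \(f^* \colon \mathbb{E}(P,Y)\to\mathbb{E}(I,Y)\) factors through \(\mathbb{E}(P,Y)\), which is \(0\) since \(P\) is projective.
\end{itemize}
Hence \(\overline{\mathbb{E}}\) is a well-defined additive bifunctor.

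\textbf{Step 2: the realization is well defined and exact.} Realizations in \(\cat{C}\) are equivalence classes of sequences \(A\to B\to C\), taken up to isomorphisms of the middle term compatible with the end maps. Their images in \(\cat{C}/J\) give equivalence classes there, since isomorphisms map to isomorphisms. Next we check that \(\overline{\mathfrak{s}}\) is additive, using that \(\mathfrak{s}(\delta\oplus\delta')\) is the direct sum. We also check the realization condition for morphisms of extensions: given \((a,c)\colon\delta\to\delta'\) in \(\cat{C}/J\), lift \(a,c\) to \(\cat{C}\). The lifts still satisfy \(a_*\delta=c^*\delta'\), because \(\overline{\mathbb{E}}=\mathbb{E}\) and the action is unchanged by Step 1. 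So a middle morphism exists in \(\cat{C}\), and we take its image.

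\textbf{Step 3: axioms (ET3), (ET4) and their duals.} These are existence statements about morphisms or conflations. One lifts the data to \(\cat{C}\) and applies the axiom there. Squares that commute only modulo \(J\) must be corrected so they commute genuinely, and this correction is the main obstacle.

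For (ET3), take a commutative square \(b a' \equiv a' ... \), i.e. \(y x \equiv x' a \pmod J\) with conflations \(A\xrightarrow{x}B\to C\) and \(A'\xrightarrow{x'}B'\to C'\). The difference \(d = yx - x'a\) factors as \(A\to I\xrightarrow{f} P\to B'\).
\begin{itemize}
\item Since \(I\) is injective and \(x\) is an inflation, \(A\to I\) extends along \(x\) to some \(u\colon B\to I\).
\item Replace \(y\) by \(y' = y - (P\to B')\,f\,u\). Then \(y'\equiv y\) modulo \(J\) and \(y'x = x'a\) on the nose.
\item Apply (ET3) in \(\cat{C}\) to obtain \(c\), and pass to the quotient.
\end{itemize}
The dual uses projectivity of \(P\) to lift along deflations.

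For (ET4), all input data are conflations together with the equality of extension classes. These are unaffected by the quotient, so we lift, apply (ET4) in \(\cat{C}\), and project. The required equalities then hold because the extension groups agree.

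Finally, the condition in the definition of an extriangulated functor, \(\overline{\mathfrak{s}}(\mathrm{id}(\delta))\) equal to the image of \(\mathfrak{s}(\delta)\), holds by construction. So the quotient is an extriangulated ideal quotient.
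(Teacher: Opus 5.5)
The paper gives no proof of this statement; it is quoted from \cite{FGPPP} and \cite{ChenDG}. Your argument therefore has to stand on its own. Its skeleton is the standard one, and Steps 1 and 2 and the (ET3) correction are right. \(J\) acts trivially on \(\mathbb{E}\) because \(\mathbb{E}(-,I)=0=\mathbb{E}(P,-)\), and extending \(A\to I\) along the inflation \(x\) is exactly where injectivity of the domain is used (projectivity of the codomain for the dual).

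The gap is in (ET4) and its dual, where you claim the input data are unaffected by the quotient and can be lifted. In general they cannot. A conflation of \(\cat{C}/J\) is any sequence \(A\xrightarrow{\bar f}B\xrightarrow{\bar f'}D\) isomorphic in \(\cat{C}/J\) to the image of a conflation \(A\xrightarrow{f}B_0\xrightarrow{f'}D\) of \(\cat{C}\). The comparison isomorphism \(\bar\phi\colon B_0\to B\) need not lift to an isomorphism of \(\cat{C}\). In (ET4) the object \(B\) is at once the middle term of \(\overline{\mathfrak{s}}(\delta)\) and the first term of \(\overline{\mathfrak{s}}(\delta')\). So you would need composable inflations \(A\rightarrowtail B\rightarrowtail C\) in \(\cat{C}\) with these very objects, and they need not exist.

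For instance, \(1_Q\in J_0\) is allowed for \(Q\) projective-injective, and \(1_Q\in[\cat{I}\to\cat{P}]\), which is the case used in the paper. If \(A\rightarrowtail Q\twoheadrightarrow D\) has class \(\delta\), then \(A\to 0\to D\) represents \(\overline{\mathfrak{s}}(\delta)\). Together with \(\delta'=0\in\mathbb{E}(F,0)\), the (ET4) input is \(A\to 0\to F\). But \(A\to 0\) is not an inflation of \(\cat{C}\) if, say, \(\cat{C}\) is exact and \(A\neq 0\). The same issue occurs in (ET3) and in the realization axiom, but there it is harmless. Conjugating by the comparison isomorphisms reduces to rows that are images of conflations of \(\cat{C}\), and you should state this.

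The repair is a transport argument. Let \(\phi,\psi\) be lifts of \(\bar\phi,\bar\phi^{-1}\), and set \(\delta'_0=\psi_*\delta'\in\mathbb{E}(F,B_0)\); by Step 1 this is independent of the lift. Apply (ET4) in \(\cat{C}\) to \(\delta\) and \(\delta'_0\), and project. Then transport back along \(\bar\phi\) and along a comparison map \(\bar\chi\) from the middle term \(C_0\) of \(\mathfrak{s}(\delta'_0)\) to \(C\). Compatibilities (i)--(iii) then follow from \(\pi(f'\psi)=\bar f'\), \(\pi(\phi f)=\bar f\) and \((\phi\psi)_*\delta'=\delta'\).

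The one non-formal point is that \(\bar\chi\) is an isomorphism in \(\cat{C}/J\); this can be checked by lifting. Let \(B\xrightarrow{g}C_1\xrightarrow{g'}F\) realize \(\delta'\) in \(\cat{C}\). The morphisms of extensions \((\psi,1_F)\colon\delta'\to\delta'_0\) and \((\phi,1_F)\colon\delta'_0\to\delta'\) give comparison maps whose composite \(\gamma\) satisfies \(\gamma g=g\phi\psi\) and \(g'\gamma=g'\). Hence \(\gamma-1=gk\) for some \(k\), and \((\gamma-1)^2=g(\phi\psi-1)k\in J\), so \(\bar\gamma\) is invertible. The other composite is handled symmetrically.
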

	
	An important idea in the theory of extriangulated categories is the possibility to replace an extriangulated structure by a substructure. Namely
	\begin{definition}
		A relative structure on an extriangulated category \((\cat{C},\mathbb{E},\mathfrak{s})\) is an extriangulated category \((\cat{C},\mathbb{F},\mathfrak{t})\) such that \(\mathbb{F} \subseteq \mathbb{E}\), and \(\mathfrak{t} = \mathfrak{s}|_{\mathbb{F}}\).
	\end{definition}
	
	One way to get relative structure on a given extriangulated category which will be essential in \Cref{triangulated} is the following.
	
	\begin{proposition}[Proposition 3.19 in \cite{HLN}]
		Let \((\cat{C},\mathbb{E},\mathfrak{s})\) be an extriangulated category, and let \(\cat{M} \subseteq \cat{C}\) be a full additive subcategory. For \(A,C \in \cat{C}\), let \(\mathbb{E}_\cat{M}\) be the subfunctor of \(\mathbb{E}\) given by
		\begin{align*}
			\mathbb{E}_\cat{M}(C,A) \coloneq \{ \delta \in \mathbb{E}(C,A) \mid f^*\delta = 0 \text{ for all } f \in \Hom{\cat{C}}(S,C) \text{ and any } S \in \cat{S}\}.
		\end{align*}
		Then \((\cat{C},\mathbb{E}_{\cat{M}}, \mathfrak{s}|_{\mathbb{E}_\cat{M}})\) is an extriangulated category. 
	\end{proposition}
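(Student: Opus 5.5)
The plan is to reduce the statement to a closedness criterion for subfunctors of \(\mathbb{E}\). Throughout I read the condition in the statement with \(S\) ranging over \(\cat{M}\); the \(\cat{S}\) there is evidently a typo for \(\cat{M}\). The criterion I intend to use is the one of Herschend--Liu--Nakaoka: an additive subbifunctor \(\mathbb{F}\subseteq\mathbb{E}\) makes \((\cat{C},\mathbb{F},\mathfrak{s}|_{\mathbb{F}})\) extriangulated as soon as the class of \(\mathfrak{s}|_{\mathbb{F}}\)-deflations is closed under composition. Dually, closure of \(\mathfrak{s}|_{\mathbb{F}}\)-inflations under composition suffices. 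So two things must be checked: that \(\mathbb{E}_\cat{M}\) is an additive subbifunctor, and that its deflations compose.

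\textbf{Step 1: subbifunctor.} Let \(\delta\in\mathbb{E}_\cat{M}(C,A)\) and \(f\colon M\to C\) with \(M\in\cat{M}\).
\begin{itemize}
\item For \(a\colon A\to A'\) we have \(f^*a_*\delta=a_*f^*\delta=0\), using the bifunctoriality of \(\mathbb{E}\).
\item For \(c\colon C'\to C\) and \(g\colon M\to C'\) we have \(g^*c^*\delta=(cg)^*\delta=0\).
\item Each \(\mathbb{E}_\cat{M}(C,A)\) is a subgroup, since \(f^*\) is additive.
\end{itemize}
Hence \(\mathbb{E}_\cat{M}\) is an additive subbifunctor of \(\mathbb{E}\).

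\textbf{Step 2: characterise the conflations.} Let \(\delta\) be realised by \(A\xrightarrow{x}B\xrightarrow{y}C\). For each \(M\in\cat{M}\) the sequence
\[
\Hom{\cat{C}}(M,B)\xrightarrow{y\circ-}\Hom{\cat{C}}(M,C)\xrightarrow{\delta_\sharp}\mathbb{E}(M,A)
\]
is exact, and \(\delta_\sharp(f)=f^*\delta\). Therefore \(\delta\in\mathbb{E}_\cat{M}(C,A)\) if and only if every morphism \(M\to C\) from an object of \(\cat{M}\) factors through \(y\). In other words, the \(\mathbb{E}_\cat{M}\)-deflations are exactly the \(\mathfrak{s}\)-deflations that are \(\cat{M}\)-epic.

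\textbf{Step 3: composition.} Let \(y\colon B\to C\) and \(y'\colon C\to D\) be \(\cat{M}\)-epic deflations.
\begin{itemize}
\item By (ET4\(^{\mathrm{op}}\)) in \(\cat{C}\), the composite \(y'y\) is an \(\mathfrak{s}\)-deflation.
\item Given \(h\colon M\to D\) with \(M\in\cat{M}\), lift \(h\) through \(y'\) to some \(h'\colon M\to C\), then lift \(h'\) through \(y\). This shows \(y'y\) is \(\cat{M}\)-epic.
\end{itemize}
By Step 2, \(y'y\) is an \(\mathbb{E}_\cat{M}\)-deflation, and the criterion gives the proposition.

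The main obstacle is the closedness criterion itself, should one prefer not to cite it. In that case I would verify (ET4\(^{\mathrm{op}}\)) for \(\mathbb{E}_\cat{M}\) by hand; (ET4) follows by an analogous argument or by symmetry of the criterion. Given \(\mathbb{E}_\cat{M}\)-deflations \(y\) and \(y'\), apply (ET4\(^{\mathrm{op}}\)) in \((\cat{C},\mathbb{E},\mathfrak{s})\). This produces a conflation \(A\to E\to \mathrm{Cone}(y)\) together with the extensions required by the axiom.
\begin{itemize}
\item The conflation through the composite \(y'y\) lies in \(\mathbb{E}_\cat{M}\) by Step 3.
\item The remaining extensions are obtained from \(\mathbb{E}_\cat{M}\)-extensions by pulling back or pushing forward along morphisms, so they lie in \(\mathbb{E}_\cat{M}\) by Step 1.
\item For any extension not obtained this way, show directly that its deflation is \(\cat{M}\)-epic, by a diagram chase using the compatibilities \(y'_{*}\)/\(y^{*}\) supplied by the axiom together with the long exact sequences for \(\Hom{\cat{C}}(M,-)\).
\end{itemize}
This chase, identifying which new extension is not already a pullback or pushforward and checking it, is the only non-formal part of the argument.
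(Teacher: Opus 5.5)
Your proof is correct, and it is essentially the standard argument of Herschend--Liu--Nakaoka, which the paper cites without reproving: you reduce via their closedness criterion to composition of deflations, and you use the exact sequence \(\Hom{\cat{C}}(M,B)\to\Hom{\cat{C}}(M,C)\xrightarrow{\delta_\sharp}\mathbb{E}(M,A)\) to identify \(\mathbb{E}_\cat{M}\)-deflations with \(\cat{M}\)-epic \(\mathfrak{s}\)-deflations. The by-hand fallback is unnecessary, but note that its genuine diagram chase belongs to (ET4), where the deflation of the new extension is not a composite of the given ones, not to (ET4\(^{\mathrm{op}}\)), where every extension produced is already covered by your Steps 1 and 3.
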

	
	Finally, we give the definition \(d\)-Auslander extriangulated categories, which is a key notion throughout the paper.
	
	\begin{definition}
		Let \(d \in \mathbb{Z}_{\geq 0}\) be a non-negative integer. An extriangulated category \(\cat{C}\) is said to be \(d\)-Auslander if the following properties hold.
		\begin{enumerate}
			\item The category \(\cat{C}\) has enough projectives. 
			\item The global dimension of \(\cat{C}\) is at most \(d+1\). That is, for every \(X\in \cat{C}\) and \(1 \leq i \leq d+1\), there are conflations
			\begin{align*}
				X_i \inflation P_i \deflation X_{i-1}
			\end{align*}
			such that \(X_0 = X\), the middle term \(P_i\) is projective for all \(1 \leq i \leq d+1\), and \(X_{d+1}\) is projective. 
			\item The dominant dimension of \(\cat{C}\) is at least \(d+1\). That is, for any projective \(P \in \cat{C}\) and \(1 \leq i \leq d+1\), there are conflations
			\begin{align*}
				X_i \inflation Q_i \deflation X_{i+1}
			\end{align*}
			such that \(X_1 = P\), the middle term \(Q_i\) is projective-injective for all \(1 \leq i \leq d+1\), and \(X_{d+2}\) is injective. 
		\end{enumerate} 
	\end{definition}
	
	\subsection{Cluster-tilting subcategories}
	We now would like to recall some of the definitions and basic properties of \(d\)-cluster-tilting subcategories, which will be necessary in \Cref{triangulated}.
	\begin{definition}
		Let \(\cat{T}\) be a triangulated category. A subcategory \(\cat{M}\) is called \(d\)-cluster-tilting if it is functorially finite, and satisfies
		\begin{enumerate}
			\item \(\cat{M} = \{X \in \cat{T} \mid \Hom{\cat{T}}(X,\Sigma^i M) \text{ for all } M \in \cat{M} \text{ and } 1 \leq i < d\}\),
			\item \(\cat{M} = \{X \in \cat{T} \mid \Hom{\cat{T}}(M,\Sigma^i X) \text{ for all } M \in \cat{M} \text{ and } 1 \leq i < d\}\).
		\end{enumerate}
	\end{definition}
	For two subcategories \(\cat{X},\cat{Y} \subseteq \cat{T}\), we denote by \(\cat{X}\ast \cat{Y}\) the full subcategory of \(\cat{T}\) with the objects \(T \in \cat{T}\) that fit in a triangle
	\begin{align*}
		X \rightarrow T \rightarrow Y \rightarrow \Sigma X,
	\end{align*}
	with \(X \in \cat{X}\) and \(Y \in \cat{Y}\). The octahedral axiom implies that \((\cat{X}\ast \cat{Y}) \ast \cat{Z} = \cat{X} \ast (\cat{Y}\ast \cat{Z})\) for any \(\cat{X},\cat{Y},\cat{Z} \subseteq \cat{T}\).
	
	\begin{theorem}[Theorem 3.1 in \cite{IyamaYoshino}]
		Let \(\cat{M}\) be a \(d\)-cluster-tilting subcategory in a triangulated category \(\cat{T}\). Then \(\cat{T} = \cat{M}\ast \Sigma\cat{M} \ast \cdots \ast \Sigma^{d-1}\cat{M}\).
	\end{theorem}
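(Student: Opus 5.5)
Fix \(X \in \cat{T}\). I will argue by induction on \(j\) that \(X\) lies in \(\cat{M} \ast \Sigma\cat{M} \ast \cdots \ast \Sigma^{j-1}\cat{M} \ast \cat{X}_j\), where
\begin{align*}
\cat{X}_j = \{ Y \in \cat{T} \mid \Hom{\cat{T}}(\cat{M},\Sigma^{i}Y)=0 \text{ for } 1\le i<d-j \}.
\end{align*}
The base case \(j=0\) is trivial: \(\cat{X}_0\) imposes no condition for \(i \geq d\), so we simply take \(X \in \cat{X}_0\). The case \(j=d-1\) finishes the proof. Indeed, the condition defining \(\cat{X}_{d-1}\) is vacuous there, and I want the last factor to lie in \(\Sigma^{d-1}\cat{M}\). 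So the induction has to be set up so that the final remainder lands in \(\Sigma^{d-1}\cat{M}\), using property (2) of the definition.

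A cleaner formulation takes right \(\cat{M}\)-approximations repeatedly. Since \(\cat{M}\) is functorially finite, choose a right \(\cat{M}\)-approximation \(f_0: M_0 \to X\) and complete it to a triangle
\begin{align*}
M_0 \to X \to Y_1 \to \Sigma M_0 .
\end{align*}
Apply \(\Hom{\cat{T}}(\cat{M},-)\). Surjectivity of \(f_0\) on \(\Hom{}(\cat{M},-)\), together with \(\Hom{}(\cat{M},\Sigma^i\cat{M})=0\) for \(1\le i<d\), gives \(\Hom{}(\cat{M},\Sigma^{-1}Y_1\to\ldots)\) information. Concretely, I get
\begin{align*}
\Hom{}(\cat{M},\Sigma^{i}Y_1)\cong \Hom{}(\cat{M},\Sigma^{i}X) \quad \text{for } 1\le i\le d-2,
\end{align*}
and \(\Hom{}(\cat{M},Y_1)\) is controlled. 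Iterating this alone does not terminate in \(\Sigma^{d-1}\cat{M}\). So instead I will run the dual procedure by rotating: set \(Z_0=X\), take a right \(\cat{M}\)-approximation \(M_k \to Z_k\), and let \(Z_{k+1}\) be its cone shifted by \(\Sigma^{-1}\). That is, I use triangles
\begin{align*}
\Sigma^{-1}Z_{k+1}\to M_k\to Z_k\to Z_{k+1}.
\end{align*}
Each step then gives \(Z_k \in \cat{M}\ast Z_{k+1}\), and the long exact sequence shows that \(\Hom{}(\cat{M},\Sigma^{-i}Z_{k+1})\) vanishes for \(0 \leq i \leq k\), i.e. \(\Hom{}(\cat{M},\Sigma^{i}(\Sigma^{-k-1}Z_{k+1}))=0\) for \(1\le i\le k+1\), up to shifting conventions.

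After \(d-1\) steps, the object \(W=\Sigma^{-(d-1)}Z_{d-1}\) satisfies \(\Hom{}(\cat{M},\Sigma^iW)=0\) for \(1\le i<d\). By (2) in the definition, \(W\in\cat{M}\), so \(Z_{d-1}\in\Sigma^{d-1}\cat{M}\). Writing \(Z_k \in \Sigma^{k}\)-shifted terms correctly and assembling with associativity of \(\ast\) (noted above via the octahedral axiom) gives \(X\in\cat{M}\ast\Sigma\cat{M}\ast\cdots\ast\Sigma^{d-1}\cat{M}\). This requires matching indices so that the \(k\)-th factor is \(\Sigma^k\cat{M}\). To arrange this, I will build the tower so that \(Z_k\) lies in \(\Sigma^k(\ldots)\), approximating \(\Sigma^{-k}Z_k\) by \(\cat{M}\).

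The main obstacle is the bookkeeping in the long exact sequence verifying the vanishing of \(\Hom{}(\cat{M},\Sigma^{i}\Sigma^{-k}Z_k)\) in the right range at each stage. There are two ingredients. The surjectivity of the approximation kills the boundary term. The vanishing \(\Hom{}(\cat{M},\Sigma^i\cat{M})=0\) for \(1\le i<d\) propagates the previously obtained vanishing, and the range must shrink by exactly one so that it still covers \(1\le i<d\) at the end.
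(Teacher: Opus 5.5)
Your overall strategy is the standard Iyama--Yoshino argument: iterate right $\cat{M}$-approximations, propagate Hom-vanishing along long exact sequences, close off with condition (2), and assemble using associativity of $\ast$. The paper only cites this theorem, but it uses exactly this tower in the proof of \Cref{proposition-CT-d-Auslander}. The normalization in your last sentence, approximating $\Sigma^{-k}Z_k$, is the right one.

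However, the construction you actually write down does not work, and the one real computation is left undone. Take the displayed triangle $\Sigma^{-1}Z_{k+1}\to M_k\to Z_k\to Z_{k+1}$ with $M_k\to Z_k$ a right approximation of $Z_k$ itself. Then only $\Hom{\cat{T}}(\cat{M},Z_{k+1})=0$ follows, and the claimed vanishing of $\Hom{\cat{T}}(\cat{M},\Sigma^{-i}Z_{k+1})$ for $1\le i\le k$ is false. For example, let $d\ge 3$ and $X=\Sigma M$ with $0\neq M\in\cat{M}$. Then $\Hom{\cat{T}}(\cat{M},\Sigma M)=0$, so you may take $M_k=0$ at every step and $Z_k=\Sigma M$ for all $k$. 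But then $\Hom{\cat{T}}(M,\Sigma^{-1}Z_2)=\Hom{\cat{T}}(M,M)\neq 0$. That construction would also only give $X\in\cat{M}\ast\cdots\ast\cat{M}\ast Z_k$, with no shifts.

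Two smaller problems. In your first paragraph, $\cat{X}_0=\cat{M}$ by condition (2), so the ``trivial'' base case is in fact the whole statement. Your remark that iterating the $Y_1$-step ``does not terminate'' is also a misdiagnosis: that step was correct, and you only needed to apply it next to $\Sigma^{-1}Y_1$ rather than to $Y_1$.

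Here is the consistent version. If $d=1$ then $\cat{M}=\cat{T}$ and there is nothing to prove, so assume $d\ge 2$. Set $T_0=X$, choose right $\cat{M}$-approximations $f_k:M_k\to T_k$, and complete them to triangles $T_{k+1}\to M_k\xrightarrow{f_k}T_k\to\Sigma T_{k+1}$. Then $T_k\in\cat{M}\ast\Sigma T_{k+1}$. Your $Z_k$ is $\Sigma^kT_k$, and your triangle should read $\Sigma^{-1}Z_{k+1}\to\Sigma^kM_k\to Z_k\to Z_{k+1}$, giving $Z_k\in\Sigma^k\cat{M}\ast Z_{k+1}$. For $M\in\cat{M}$ and $j\ge 0$ there are exact sequences
\begin{align*}
\Hom{\cat{T}}(M,\Sigma^jM_k)\xrightarrow{\Sigma^jf_k\circ-}\Hom{\cat{T}}(M,\Sigma^jT_k)\to\Hom{\cat{T}}(M,\Sigma^{j+1}T_{k+1})\to\Hom{\cat{T}}(M,\Sigma^{j+1}M_k).
\end{align*}
For $j=0$ the first map is surjective and the last term vanishes, so $\Hom{\cat{T}}(\cat{M},\Sigma T_{k+1})=0$. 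For $1\le j\le d-2$ both outer terms vanish, so $\Hom{\cat{T}}(\cat{M},\Sigma^jT_k)\cong\Hom{\cat{T}}(\cat{M},\Sigma^{j+1}T_{k+1})$. By induction, $\Hom{\cat{T}}(\cat{M},\Sigma^jT_k)=0$ for $1\le j\le k\le d-1$. Note that the vanishing range grows by one at each step rather than shrinking. At $k=d-1$ it is exactly the range in condition (2), so $T_{d-1}\in\cat{M}$ and $X\in\cat{M}\ast\Sigma\cat{M}\ast\cdots\ast\Sigma^{d-1}\cat{M}$.
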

	
	\begin{corollary}
		Let \(\cat{M}\) be a \(d\)-cluster-tilting subcategory in a triangulated category \(\cat{T}\). Let \(f : X \rightarrow Y\) be a morphism in \(\cat{T}\) such that the map
		\begin{align*}
			\Hom{\cat{T}}(M,X) \xrightarrow{f\circ} \Hom{\cat{T}}(M,Y)
		\end{align*}
		vanishes for all \(M \in \cat{M}\). Then \(f \in [\Sigma\cat{M}\ast\cdots\ast\Sigma^{d-1}\cat{M}]\).
	\end{corollary}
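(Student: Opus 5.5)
The plan is to deduce the corollary directly from the preceding theorem of Iyama--Yoshino, $\cat{T} = \cat{M}\ast \Sigma\cat{M} \ast \cdots \ast \Sigma^{d-1}\cat{M}$, by factoring $f$ through the part of $X$ that lives in $\Sigma\cat{M}\ast\cdots\ast\Sigma^{d-1}\cat{M}$.

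\textbf{Decompose $X$.} Using associativity of $\ast$, the theorem writes $\cat{T} = \cat{M} \ast (\Sigma\cat{M}\ast\cdots\ast\Sigma^{d-1}\cat{M})$. So for our object $X$ we get a triangle
\begin{align*}
M_X \xrightarrow{g} X \xrightarrow{h} Y' \rightarrow \Sigma M_X
\end{align*}
with $M_X \in \cat{M}$ and $Y' \in \Sigma\cat{M}\ast\cdots\ast\Sigma^{d-1}\cat{M}$.

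\textbf{Factor $f$ through $h$.} Apply the hypothesis with $M = M_X$ and the element $g \in \Hom{\cat{T}}(M_X,X)$. It gives $f\circ g = 0$. Since $\Hom{\cat{T}}(-,Y)$ applied to the triangle is exact, $f$ factors through the cone of $g$. That is, $f = f'\circ h$ for some $f' : Y' \rightarrow Y$. Hence $f$ factors through $Y' \in \Sigma\cat{M}\ast\cdots\ast\Sigma^{d-1}\cat{M}$, so $f$ lies in the ideal $[\Sigma\cat{M}\ast\cdots\ast\Sigma^{d-1}\cat{M}]$.

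\textbf{Main point to check.} The only step needing care is producing a triangle whose first term is exactly in $\cat{M}$ and whose third term lies in the iterated extension subcategory. The two readings of the decomposition agree because $(\cat{X}\ast\cat{Y})\ast\cat{Z} = \cat{X}\ast(\cat{Y}\ast\cat{Z})$, as noted earlier in the paper.

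In the degenerate case $d=1$ the target class is $\{0\}$ and $\cat{T}=\cat{M}$. Then $g$ can be taken to be the identity of $X$, and the hypothesis forces $f=0$, which is consistent with the statement. No functorial finiteness beyond what the Iyama--Yoshino theorem already uses is needed.
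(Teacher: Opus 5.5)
Your proof is correct and is the paper's own argument: take the Iyama--Yoshino triangle \(M \xrightarrow{g} X \xrightarrow{h} N \rightarrow \Sigma M\) with \(M \in \cat{M}\) and \(N \in \Sigma\cat{M}\ast\cdots\ast\Sigma^{d-1}\cat{M}\), get \(f\circ g = 0\) from the hypothesis, and factor \(f\) through \(h\). Your remarks on associativity of \(\ast\) and on the case \(d=1\) spell out points the paper leaves implicit.
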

	\begin{proof}
		According to the above theorem, \(X\) fits in a triangle of the form
		\begin{align*}
			M \xrightarrow{g} X \xrightarrow{h} N \rightarrow \Sigma M
		\end{align*}
		with \(M \in \cat{M}\) and \(N \in \Sigma\cat{M}\ast\cdots\ast\Sigma^{d-1}\cat{M}\). Since the composition \(f \circ g\) vanishes, the morphism \(f\) factors through the morphism \(h\), so \(f \in [\Sigma\cat{M}\ast\cdots\ast\Sigma^{d-1}\cat{M}]\).
	\end{proof}
	One last definition that will be useful for us, and that appears in the works of \cite{KellerReiten, IO13} is the following.
	\begin{definition}
		A \(d\)-cluster-tilting subcategory \(\cat{M}\) of a triangulated category \(\cat{T}\) is said to satisfy the vosnex property (vanishing of small negative extensions) if
		\begin{align*}
			\Hom{\cat{C}}(M,\Sigma^{-i}M') = 0
		\end{align*} 
		for all \(M, M' \in \cat{M}\) and \(1\leq i \leq d-2\). 
	\end{definition}
	
	\subsection{A triangulated hull}
	
	In this subsection we will give an explicit description of a triangulated hull for algebraic extriangulated categories with enough projective objects. This is a particular case of Proposition Definition 3.14 in \cite{ChenDG}. \par
	Let \(\cat{C}\) be an algebraic extriangulated category, and let \(\cat{E}\) be an exact category with a subcategory \(\cat{Q}_0\) of projective-injective objects satisfying \(\cat{C} \simeq \cat{E}/[\cat{Q}_0]\). We also set \(\cat{P} = \proj(\cat{E})\).
	We will consider the category \(\hocat{-}(\cat{P})\) and its triangulated subcategory \(\hocat{b}(\cat{Q}_0)\). We let \(\iota : \cat{E} \rightarrow \hocat{-}(\cat{P})\) be the canonical embedding. \par
	For a complex \(X^\bullet\), we let \(\tau_{k}X^\bullet\) be the complex obtained from \(X^\bullet\) by setting \(\tau_{k}X^n = X^n\) for \(n \geq k\), and \(\tau_{k}X^n = 0\) otherwise.
	
	\begin{lemma}\label{lemma-truncation}
		Let \(M\) be an object in \(\cat{E}\) and \(Q^\bullet\) be a complex in \(\hocat{b}(\cat{Q}_0)\). Any morphism \(f^\bullet : \iota(M) \rightarrow Q^\bullet\) in \(\hocat{-}(\cat{P})\) factors through the canonical map \(\tau_0Q^\bullet \rightarrow Q^\bullet\).
	\end{lemma}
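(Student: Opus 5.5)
The approach is to show that, up to homotopy, $f^\bullet$ can be replaced by a chain map whose components in all negative degrees vanish. Such a map automatically factors through $\tau_0Q^\bullet$. Here $\iota(M)$ is a projective resolution $\cdots \to P^{-2}\to P^{-1}\to P^0$ of $M$, concentrated in degrees $\leq 0$. It is built from conflations $Z^{n}\inflation P^{n}\deflation Z^{n+1}$ in $\cat{E}$ with $Z^1=M$, so that $d_P^{n}$ is the composite $P^n\deflation Z^{n+1}\inflation P^{n+1}$. The brutal truncation $\tau_0Q^\bullet$ is a subcomplex of $Q^\bullet$, and the canonical map $\tau_0Q^\bullet\to Q^\bullet$ is the degreewise inclusion. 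Hence a chain map $g^\bullet:\iota(M)\to Q^\bullet$ with $g^n=0$ for all $n<0$ factors through it: take the same components in degrees $\geq 0$ and $0$ elsewhere. The chain condition at the degree $-1\to 0$ step is trivial, since the source component in degree $-1$ maps to $0$.

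I would kill the negative components by induction from below. Since $Q^\bullet\in\hocat{b}(\cat{Q}_0)$ is bounded, $f^n=0$ for $n\ll 0$. Suppose $n<0$ and $f^k=0$ for all $k<n$. Then $f^n\circ d_P^{n-1}=d_Q^{n-1}\circ f^{n-1}=0$. Now $d_P^{n-1}$ factors as the deflation $P^{n-1}\deflation Z^n$ followed by $Z^n\inflation P^n$, and $f^n$ vanishes on $Z^n$ because the deflation is an epimorphism. The conflation $Z^n\inflation P^n\deflation Z^{n+1}$ then shows that $f^n$ factors as $u\circ p$, where $p:P^n\deflation Z^{n+1}$ and $u:Z^{n+1}\to Q^n$. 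Since $Q^n\in\cat{Q}_0$ is injective in $\cat{E}$, $u$ extends along the inflation $Z^{n+1}\inflation P^{n+1}$ to some $h:P^{n+1}\to Q^n$. Note that $n+1\leq 0$, so $P^{n+1}$ is indeed a term of $\iota(M)$. With this choice $h\circ d_P^n=f^n$.

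Let $h$ be the homotopy that is nonzero only in this one degree, and replace $f^\bullet$ by $f^\bullet-(d_Qh+hd_P)$. This changes only the components in degrees $n$ and $n+1$. The new degree-$n$ component is $f^n-h d_P^n=0$, and all lower components remain $0$. The new map is homotopic to $f^\bullet$, so it represents the same morphism in $\hocat{-}(\cat{P})$. After finitely many steps, up to $n=-1$, we obtain a representative vanishing in all negative degrees, and the factorization follows as described above.

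The only real point is the factorization step. It uses exactness of the resolution at $P^n$ for $n<0$, in the sense of the conflations in $\cat{E}$, together with injectivity of the terms of $Q^\bullet$. This is exactly why the argument stops at degree $0$: there $P^0\deflation M$ is not followed by another projective, so $f^0$ cannot be killed in general.
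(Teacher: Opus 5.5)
Your proposal is correct and is essentially the paper's proof. The key step is the same: factor the lowest nonzero component through the cokernel of the incoming differential, extend it along the inflation into the next projective using injectivity of the corresponding term of \(Q^\bullet\), and subtract the resulting null-homotopic map. The only difference is cosmetic: the paper inducts on the lowest nonzero degree \(k\) of \(Q^\bullet\), factoring through \(\tau_{k+1}Q^\bullet\) and then replacing \(Q^\bullet\) by this truncation, whereas you keep \(Q^\bullet\) fixed and induct on the lowest nonzero component of \(f^\bullet\).
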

	
	\begin{proof}
		Let \(k\) be the smallest integer such that \(Q^k \neq 0\). If \(k \geq 0\), the statement is trivial, so we assume \(k < 0\). By induction, it will suffice to show that \(f^\bullet\) factors through \(\tau_{k+1} Q^\bullet \rightarrow Q^\bullet\). \par
		We write \(P^\bullet \coloneq \iota (M)\). Then locally around \(k\), we have the commutative solid diagram
		\begin{center}
		\begin{tikzcd}
			&&& C && \\
			\cdots & {P^{k-1}} & {P^k} && {P^{k+1}} & \cdots \\
			\cdots & 0 & {Q^k} && {Q^{k+1}} & \cdots
			\arrow["y", tail, from=1-4, to=2-5]
			\arrow[from=2-1, to=2-2]
			\arrow["{d_P^{k-1}}", from=2-2, to=2-3]
			\arrow[from=2-2, to=3-2]
			\arrow["x", two heads, from=2-3, to=1-4]
			\arrow["{d_P^k}", from=2-3, to=2-5]
			\arrow["{f^k}", from=2-3, to=3-3]
			\arrow[from=2-5, to=2-6]
			\arrow["h", dashed, from=2-5, to=3-3]
			\arrow["{f^{k+1}}", from=2-5, to=3-5]
			\arrow[from=3-1, to=3-2]
			\arrow[from=3-2, to=3-3]
			\arrow["{d_Q^k}"', from=3-3, to=3-5]
			\arrow[from=3-5, to=3-6]
		\end{tikzcd}
		\end{center}
		with \(x\) a deflation which is the cokernel of \(d_P^{k-1}\), and \(y\) an inflation. Since \(f^k \circ d_P^{k-1} = 0\), the morphism \(f^k\) factors through \(x\). Since \(y\) is an inflation and \(Q^k\) an injective object, it factors further through \(y\). So we obtain the dashed morphism \(h:P^{k+1}\rightarrow Q^k\) such that \(h\circ d_P^k = f^k\). So \(f^\bullet\) is homotopic to the morphism \(\tilde{f}^\bullet : P^\bullet \rightarrow Q^\bullet\), which is given by
		\begin{align*}
			\tilde{f}^m = \begin{cases}
				0 & m \leq k \\
				f^{k+1}-d_Q^k\circ h & m = k+1 \\
				f^m & m \geq k+2
			\end{cases}.
		\end{align*}
		Clearly \(\tilde{f}^\bullet\) factors through \(\tau_{k+1}Q^\bullet \rightarrow Q^\bullet\), which concludes the proof.
	\end{proof}
	
	\begin{lemma}\label{lemma-kernel}
		The kernel of the functor \(\Phi : \cat{E} \rightarrow \hocat{-}(\cat{P})/\hocat{b}(\cat{Q}_0)\) is exactly \([\cat{Q}_0]\). 
	\end{lemma}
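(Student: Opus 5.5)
We must show two inclusions: every morphism of \(\cat{E}\) factoring through an object of \(\cat{Q}_0\) is killed by \(\Phi\), and conversely every morphism \(f : M \to N\) in \(\cat{E}\) with \(\Phi(f)=0\) factors through \(\cat{Q}_0\). The first inclusion is immediate. If \(Q \in \cat{Q}_0\), then \(\iota(Q)\) is isomorphic in \(\hocat{-}(\cat{P})\) to the stalk complex \(Q\), because \(Q\) is projective. This complex lies in \(\hocat{b}(\cat{Q}_0)\), so \(\Phi(Q) \cong 0\). Hence any morphism factoring through \(Q\) is sent to zero.

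For the converse, let \(f : M \to N\) with \(\Phi(f)=0\). By the standard description of morphisms vanishing in a Verdier quotient, \(\iota(f)\) factors as \(\iota(M) \xrightarrow{a} Q^\bullet \xrightarrow{b} \iota(N)\) for some \(Q^\bullet \in \hocat{b}(\cat{Q}_0)\). The plan is to push this factorization down to degree \(0\).
\begin{enumerate}
\item By \Cref{lemma-truncation}, replace \(a\) by a morphism into \(\tau_0 Q^\bullet\). This lets us assume \(Q^\bullet\) is concentrated in nonnegative degrees.
\item Now treat \(b : Q^\bullet \to \iota(N)\). Here \(\iota(N)\) is a projective resolution concentrated in degrees \(\leq 0\), so \(b\) is determined by its degree-\(0\) component \(b^0 : Q^0 \to P_N^0\), up to the components in positive degrees, which are zero.
\item Use the dual of \Cref{lemma-truncation}, with projectivity of \(Q^0\) in place of injectivity. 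This should show that \(b\) is homotopic to a map factoring through the brutal truncation \(Q^\bullet \to Q^0\), viewed as a stalk in degree \(0\). Concretely, \(b^0 \circ d_Q^{-1}=0\) holds automatically since \(Q^{-1}=0\), so \(b\) is a chain map \(Q^\bullet\to P_N^\bullet\) that kills everything above degree \(0\). It then factors through the quotient map \(Q^\bullet \to Q^0[0]\), provided the composite \(Q^0\to P^0_N\) vanishes on the image of \(d_Q^{-1}\), which holds trivially.
\item Combining the steps, \(\iota(f)\) factors in \(\hocat{-}(\cat{P})\) through the stalk complex \(Q^0[0]\).
\item Since \(\iota\) is fully faithful on \(\cat{E}\) (morphisms between resolutions up to homotopy are exactly morphisms in \(\cat{E}\), as the complexes are resolutions), \(f\) factors through \(Q^0 \in \cat{Q}_0\) in \(\cat{E}\).
\end{enumerate}

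The main obstacle is the bookkeeping in steps 2 and 3. After truncation, \(a : \iota(M)\to \tau_0Q^\bullet\) has only a degree-\(0\) component \(a^0 : P_M^0 \to Q^0\). We need the composite \(a^0\) followed by the map \(Q^0 \to Q^0[0]\) to still be a chain map. Chain maps into a complex in degrees \(\geq 0\) from one in degrees \(\leq 0\) require \(d_Q^0 a^0 = 0\), and indeed they always give this. We must also check that composing with the projection \(\tau_0Q^\bullet \to Q^0[0]\) and then into \(\iota(N)\) recovers \(b\circ a\). This holds because \(b\) vanishes in positive degrees, since \(\iota(N)\) has no terms there. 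So the composite of chain maps agrees degreewise, and no homotopy is needed beyond the one supplied by \Cref{lemma-truncation}.

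Finally, we should verify that \(\iota\) is well-defined and fully faithful. Here \(\iota(M)\) is a projective resolution in \(\cat{E}\), which exists because the ambient setting provides enough projectives. Maps between such resolutions modulo homotopy correspond to maps in \(\cat{E}\) by the comparison theorem for exact categories. This final step is standard.
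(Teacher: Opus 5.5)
Your proposal is correct and follows the same route as the paper. You factor \(\iota(f)\) through some \(Q^\bullet\in\hocat{b}(\cat{Q}_0)\), use \Cref{lemma-truncation} to replace \(Q^\bullet\) by \(\tau_0Q^\bullet\), and conclude that \(f\) factors through \(Q^0\); your steps 2--5 just spell out the paper's one-line ``which in turn implies'' via the brutal quotient \(\tau_0Q^\bullet\to Q^0[0]=\iota(Q^0)\) and the full faithfulness of \(\iota\). The appeal to a ``dual'' of \Cref{lemma-truncation} in step 3 is unnecessary: as you yourself note, any chain map \(\tau_0Q^\bullet\to\iota(N)\) factors through that quotient on the nose, with no homotopy needed.
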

	
	\begin{proof}
		Clearly \([\cat{Q}_0] \subseteq \ker \Phi\). \par
		Conversely, let \(f : M \rightarrow N\) be a morphism in \(\cat{E}\) such that \(\Phi(f) = 0 \). This means that \(\iota(f)\) factors as
		\begin{align*}
			\iota(M) \rightarrow Q^\bullet \rightarrow \iota(N)
		\end{align*}
		for some \(Q^\bullet \in \hocat{b}(\cat{Q}_0)\). Now \Cref{lemma-truncation} implies that \(\iota(f)\) factors as
		\begin{align*}
			\iota(M) \rightarrow \tau_0 Q^\bullet \rightarrow \iota(N),
		\end{align*}
		which in turn implies that \(f\) factors through \(Q^0\). This proves the claim.
	\end{proof}
	
	\begin{lemma}\label{lemma-full}
		The functor \(\Phi : \cat{E} \rightarrow \hocat{-}(\cat{P})/\hocat{b}(\cat{Q}_0)\) is full.
	\end{lemma}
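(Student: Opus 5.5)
A morphism \(\Phi(M)\to\Phi(N)\) in the Verdier quotient \(\hocat{-}(\cat{P})/\hocat{b}(\cat{Q}_0)\) is a roof \(\iota(M)\xleftarrow{s} Z^\bullet \xrightarrow{g} \iota(N)\) with \(\cone{s}\in\hocat{b}(\cat{Q}_0)\). The plan is to replace any such roof by one whose left leg is the identity, modulo \(\hocat{b}(\cat{Q}_0)\). Then the morphism is \(\iota(f)\) for a morphism \(f\colon \iota(M)\to\iota(N)\) in \(\hocat{-}(\cat{P})\), and every such chain map comes from \(\cat{E}\) because \(\iota\) is fully faithful. The embedding \(\iota\), sending an object to a projective resolution, is fully faithful since \(\cat{E}\) has enough projectives and \(\Hom{\hocat{-}(\cat{P})}(\iota M,\iota N)\cong\Hom{\cat{E}}(M,N)\).

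Write \(s\) as part of a triangle \(Z^\bullet\xrightarrow{s}\iota(M)\xrightarrow{u} Q^\bullet\to\Sigma Z^\bullet\) with \(Q^\bullet\in\hocat{b}(\cat{Q}_0)\). By \Cref{lemma-truncation}, \(u\) factors as \(\iota(M)\xrightarrow{u'}\tau_0Q^\bullet\to Q^\bullet\).

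Since \(\tau_0Q^\bullet\) is concentrated in nonnegative degrees and \(\iota(N)\) in nonpositive degrees, morphisms between them only see degree \(0\). The key step is to show that \(g\) extends along \(s\) after modifying by something factoring through \(\hocat{b}(\cat{Q}_0)\). Concretely, consider the obstruction \(g\circ(\Sigma^{-1}\text{connecting map})\colon \Sigma^{-1}Q^\bullet\to\iota(N)\). We want to show that \(\Hom{}(\Sigma^{-1}Q^\bullet,\iota(N))\) is controlled by the truncation. Alternatively, and more cleanly, replace \(Q^\bullet\) by \(\tau_0Q^\bullet\) via the octahedral axiom. Let \(Z'^\bullet\) be the cocone of \(u'\). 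Then \(s\) factors as \(Z^\bullet\to Z'^\bullet\xrightarrow{s'}\iota(M)\), with the cone of the first map in \(\hocat{b}(\cat{Q}_0)\) (it is a shift of the cone of \(\tau_0Q^\bullet\to Q^\bullet\), which is \(\Sigma\) of the brutal truncation \(\tau_{<0}Q^\bullet\)). So the roof can be rewritten with left leg \(s'\), once \(g\) is extended to \(Z'^\bullet\). The obstruction to that extension is a map from \(\Sigma^{-1}\tau_{<0}Q^\bullet\)-type objects into \(\iota(N)\), i.e. a map from a complex of injective objects \(Q^j\), \(j<0\), shifted. Such maps into \(\iota(N)\) can be killed using injectivity of the \(Q^j\) in \(\cat{E}\), exactly as in the proof of \Cref{lemma-truncation}, but with the roles of source and target dualized.

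Once the left leg is \(s'\) with cone \(\tau_0 Q^\bullet\), note that \(Z'^\bullet\) is the cocone of a map \(\iota(M)\to\tau_0Q^\bullet\). That map is determined by a morphism \(M\to Q^0\) composing to zero with \(d_Q^0\). Concretely, \(Z'^\bullet\) is, up to homotopy, \(\iota(K)\) for \(K\) fitting in a conflation-like sequence, or more directly \(Z'^\bullet\) agrees with \(\iota(M)\) in negative degrees. Take a map \(M\to Q^0\). Since \(Q^0\) is projective-injective and \(\cat{E}\) is exact, we can form \(M\xrightarrow{\mat{1\\ u'^0}} M\oplus Q^0\), and compare \(Z'^\bullet\) with \(\iota\) of an object of \(\cat{E}\). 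Then \(g\) becomes a morphism \(\iota(M')\to\iota(N)\), i.e. a morphism of \(\cat{E}\), and \(s'\) becomes \(\iota\) of a morphism \(M'\to M\) that is invertible modulo \([\cat{Q}_0]\). Its inverse in the quotient is realized by a morphism of \(\cat{E}\), e.g. a split map \(M\to M\oplus Q^0\). Composing yields a preimage of the roof.

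The main obstacle is the middle step: controlling maps out of the negative-degree part of \(Q^\bullet\) into \(\iota(N)\) and reducing to a cone concentrated in degrees \(\ge 0\). I would first try to handle it by dualizing the homotopy argument of \Cref{lemma-truncation}, using injectivity of each \(Q^j\) to shift components one degree at a time.
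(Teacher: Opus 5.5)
Your opening moves are the right ones, and they match the paper's: represent the morphism by a roof \((s,g)\), use \Cref{lemma-truncation} to factor \(u=c\circ u'\) with \(c\colon\tau_0Q^\bullet\to Q^\bullet\), and pass to the cocone \(Z'^\bullet\) of \(u'\). The gap is that the comparison map between the two cocones goes the other way. Since \(u\circ s'=c\circ u'\circ s'=0\), it is \(s'\) that factors through \(s\): \(s'=s\circ h\) for some \(h\colon Z'^\bullet\to Z^\bullet\). Your factorization of \(s\) through \(s'\) would need \(u'\circ s=0\), but you only know \(c\circ u'\circ s=0\), and this fails in general.

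With the correct direction there is nothing to extend. The roof \((s,g)\) is equivalent to \((s',g\circ h)\), because \(s\circ h=s'\) already lies in the multiplicative system. So the ``main obstacle'' you isolate is an artifact of the reversed arrow. Your proposed remedy would not work anyway. Maps from \(\Sigma^{-1}\) of the negative-degree part of \(Q^\bullet\) into \(\iota(N)\) need not vanish, since that complex has \(Q^{-1}\) in degree \(0\). For instance, if \(Q^\bullet\) is \(Q^{-1}\) placed in degree \(-1\), these maps are exactly \(\Hom{\cat{E}}(Q^{-1},N)\).

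Your last paragraph also fails. In general \(Z'^\bullet\) is not \(\iota\) of any object of \(\cat{E}\), because it contains \(Q^0,Q^1,\dots\) in degrees \(\geq 1\). For \(M=0\) and \(Q^\bullet=Q_0\) in degree \(0\), you get \(Z'^\bullet=\Sigma^{-1}Q_0\), sitting in degree \(1\). So there is no \(M'\) for which \(g\) becomes a morphism \(\iota(M')\to\iota(N)\), and \(s'\) is not \(\iota\) of a morphism in \(\cat{E}\).

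The missing idea is a pure degree argument that you came close to. The object mapping into \(Z'^\bullet\) is not \(\tau_0Q^\bullet\) but \(\Sigma^{-1}\tau_0Q^\bullet\). It is concentrated in degrees \(\geq 1\), while \(\iota(N)\) lives in degrees \(\leq 0\), so \(\Hom{\hocat{-}(\cat{P})}(\Sigma^{-1}\tau_0Q^\bullet,\iota(N))=0\). Apply \(\Hom{\hocat{-}(\cat{P})}(-,\iota(N))\) to the triangle \(\Sigma^{-1}\tau_0Q^\bullet\to Z'^\bullet\xrightarrow{s'}\iota(M)\xrightarrow{u'}\tau_0Q^\bullet\). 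This gives \(g\circ h=\varphi\circ s'\) for some \(\varphi\colon\iota(M)\to\iota(N)\). By fullness of \(\iota\), \(\varphi=\iota(f)\) for some \(f\colon M\to N\), and hence the roof equals \(\Phi(f)\). This is precisely the paper's proof.
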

	
	\begin{proof}
		Let \(f^\bullet : \Phi(M) \rightarrow \Phi(N)\) be a morphism in \(\hocat{-}(\cat{P})/\hocat{b}(\cat{Q}_0)\). We can represent \(f^\bullet\) by a roof
		\begin{center}
		\begin{tikzcd}
			&& {P^\bullet} & \\
			& {\iota(M)} && {\iota(N)} \\
			{Q^\bullet}
			\arrow["q", from=1-3, to=2-2]
			\arrow["p"', from=1-3, to=2-4]
			\arrow[from=2-2, to=3-1]
		\end{tikzcd}.
		\end{center}
		Using \Cref{lemma-truncation}, we obtain the following diagram
		\begin{center}
		\begin{tikzcd}
			{\Sigma^{-1}\tau_0Q^\bullet} & {\tilde{P}^\bullet} & {\iota(A)} & {\tau_0Q^\bullet} \\
			{\Sigma^{-1}Q^\bullet} & {P^\bullet} & {\iota(A)} & {Q^\bullet}
			\arrow[from=1-1, to=1-2]
			\arrow[from=1-1, to=2-1]
			\arrow["{\tilde{q}}", from=1-2, to=1-3]
			\arrow["h", dashed, from=1-2, to=2-2]
			\arrow[from=1-3, to=1-4]
			\arrow[equals, from=1-3, to=2-3]
			\arrow[from=1-4, to=2-4]
			\arrow[from=2-1, to=2-2]
			\arrow["q", from=2-2, to=2-3]
			\arrow[from=2-3, to=2-4]
		\end{tikzcd},
		\end{center}
		where the rows are triangles in \(\hocat{-}(\cat{P})\). So the morphism \(f\) is also represented by the roof
		\begin{center}
		\begin{tikzcd}
			&& {\tilde{P}^\bullet} & \\
			& {\iota(M)} && {\iota(N)} \\
			{\tau_0 Q^\bullet}
			\arrow["\tilde{q}", from=1-3, to=2-2]
			\arrow["p\circ h"', from=1-3, to=2-4]
			\arrow[from=2-2, to=3-1]
		\end{tikzcd}.
		\end{center}
		As \(\Hom{\hocat{-}(\cat{P})}(\Sigma^{-1}\tau_0Q^\bullet,\iota(N))=0\), the morphism \(p \circ h\) factors through \(\tilde{q}\). Explicitly, the functor \(\iota\) being full, there exists a morphism \(\tilde{f} : M \rightarrow N\) which makes the following diagram in \(\hocat{-}(\cat{P})\) commute
		\begin{center}
		\begin{tikzcd}
			& {\tilde{P}^\bullet} & \\
			{\iota(M)} & {\iota(M)} & {\iota(N)}
			\arrow["{\tilde{q}}"', from=1-2, to=2-1]
			\arrow["{\tilde{q}}", from=1-2, to=2-2]
			\arrow["{p\circ h}", from=1-2, to=2-3]
			\arrow[equals, from=2-2, to=2-1]
			\arrow["{\Phi(\tilde{f}})"', from=2-2, to=2-3]
		\end{tikzcd}.
		\end{center}
		In particular, \(\Phi(\tilde{f}) = f\) in \(\hocat{-}(\cat{P})/\hocat{b}(\cat{Q}_0)\).
	\end{proof}
	
	\begin{remark}\label{remark-full}
		We notice that we can replace \(\Phi(N)\) in the proof by any complex \(X^\bullet\) satisfying \(X^k = 0\) for \(k > 0\). In particular, the natural homomorphism
		\begin{align*}
			\Hom{\hocat{-}(\cat{P})}(\iota(M),X^\bullet) \rightarrow \Hom{\hocat{-}(\cat{P})/\hocat{b}(\cat{Q}_0)}(\Phi(M),X^\bullet)
		\end{align*}
		is surjective. 
	\end{remark}
	
	\begin{proposition}\label{proposition-embedding}
		The functor \(\Phi : \cat{E} \rightarrow \hocat{-}(\cat{P})/\hocat{b}(\cat{Q}_0)\) induces a full and faithful functor \(\Psi : \cat{C} \rightarrow \hocat{-}(\cat{P})/\hocat{b}(\cat{Q}_0)\). The essential image of \(\Psi\) is extension closed, and it induces an equivalence of extriangulated categories between \(\cat{C}\) and its essential image. 
	\end{proposition}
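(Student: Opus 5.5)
Full faithfulness is essentially already in hand, so the plan is short on that part and spends its effort on the extriangulated structure. By \Cref{lemma-kernel}, \(\ker\Phi=[\cat{Q}_0]\), so \(\Phi\) factors through the quotient \(\cat{E}\to\cat{E}/[\cat{Q}_0]\simeq\cat{C}\). The induced functor \(\Psi\) is therefore faithful, and it is full by \Cref{lemma-full}.

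For the extriangulated statement, I would compare extensions directly. For \(A,C\in\cat{E}\), I use the standard identification \(\Ext{\cat{C}}(C,A)=\Extg{\cat{E}}^1(C,A)\), valid since \(\cat{Q}_0\) is projective-injective. In \(\hocat{-}(\cat{P})\) we have \(\Extg{\cat{E}}^1(C,A)\cong\Hom{\hocat{-}(\cat{P})}(\iota C,\Sigma\iota A)\), because \(\iota C\) is a projective resolution. I would then apply \Cref{remark-full} to \(X^\bullet=\Sigma\iota(A)\), which is concentrated in degrees \(\le 0\) (indeed \(\le -1\)). This shows that \(\Hom{\hocat{-}(\cat{P})}(\iota C,\Sigma\iota A)\to\Hom{}(\Psi C,\Sigma\Psi A)\) in the Verdier quotient is surjective.

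For injectivity, suppose \(\iota C\to\Sigma\iota A\) factors through some \(Q^\bullet\in\hocat{b}(\cat{Q}_0)\). By \Cref{lemma-truncation}, the first map factors through \(\tau_0Q^\bullet\). The remaining map \(\tau_0Q^\bullet\to\Sigma\iota A\) is the zero map in \(\hocat{-}(\cat{P})\). To see this, note that \(\Sigma\iota A\) lives in degrees \(\le -1\) while \(\tau_0 Q^\bullet\) lives in degrees \(\ge 0\), so any chain map between them vanishes.

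Next I would check that conflations go to triangles. A conflation \(A\inflation B\deflation C\) in \(\cat{E}\) gives a degreewise split short exact sequence of projective resolutions (horseshoe lemma), hence a triangle \(\iota A\to\iota B\to\iota C\to\Sigma\iota A\) in \(\hocat{-}(\cat{P})\). Its connecting morphism is exactly the extension class under the identification above, and the triangle passes to the Verdier quotient. Conflations in \(\cat{C}\) are images of those in \(\cat{E}\), so \(\Psi\) together with the natural isomorphism \(\Ext{\cat{C}}\cong\Hom{}(\Psi-,\Sigma\Psi-)\) is an extriangulated functor. It reflects the structure because the isomorphism on extensions is bijective.

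The main obstacle is extension-closedness of the essential image. Let \(\Psi A\to Y\to\Psi C\xrightarrow{\delta}\Sigma\Psi A\) be a triangle in the quotient. By the surjectivity step, \(\delta\) lifts to some \(\eta\in\Extg{\cat{E}}^1(C,A)\). Let \(A\inflation B\deflation C\) be the conflation realizing \(\eta\). Then \(\Psi B\) is a cone of \(\Psi C\to\Sigma\Psi A\), so \(Y\cong\Psi B\) by uniqueness of cones up to isomorphism. The care needed is only in making sure that the lift obtained from \Cref{remark-full} really represents \(\delta\) on the nose and not merely up to sign or a change of roof; I expect this to follow from the naturality of the identifications used above.
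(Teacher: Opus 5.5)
Your proof is correct. It differs from the paper's at the one substantive step: showing that \(\Ext{\cat{C}}(C,A)\to\Hom{\cat{D}}(\Psi C,\Sigma\Psi A)\) is bijective, where \(\cat{D}=\hocat{-}(\cat{P})/\hocat{b}(\cat{Q}_0)\).

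\textbf{The paper's route.} It uses only the surjectivity coming from \Cref{remark-full}. It takes a conflation \(L\inflation P\deflation M\) with \(P\) projective. From surjectivity and \(\Extg{\cat{E}}^1(P,N)=0\) it deduces \(\Hom{\cat{D}}(\Psi P,\Sigma\Psi N)=0\). It then identifies both \(\Ext{\cat{C}}(M,N)\) and \(\Hom{\cat{D}}(\Sigma^{-1}\Psi M,\Psi N)\) as cokernels of \(\Hom{\cat{C}}(P,N)\to\Hom{\cat{C}}(L,N)\), resp. its image under \(\Psi\), using full faithfulness of \(\Psi\).

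\textbf{Your route.} You prove injectivity of \(\Hom{\hocat{-}(\cat{P})}(\iota C,\Sigma\iota A)\to\Hom{\cat{D}}(\Psi C,\Sigma\Psi A)\) directly. You combine \Cref{lemma-truncation} with the observation that any chain map from \(\tau_0Q^\bullet\) (degrees \(\geq 0\)) to \(\Sigma\iota A\) (degrees \(\leq -1\)) is zero.

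\textbf{What each buys.} The paper's argument is more formal and reuses full faithfulness. However, it leaves implicit that the abstract cokernel isomorphism agrees with the natural map and is compatible with realizations. Your argument obtains the isomorphism as a composite of natural maps. So compatibility with the horseshoe triangles follows directly, and with it the extriangulated-functor property and the extension-closedness argument.

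You also make explicit a point the paper passes over quickly: the middle term of the lifted triangle lies in the image of \(\iota\), because you realize the lifted class as a conflation.

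The sign issue you flag is harmless. A triangle with third morphism \(-\delta\) is isomorphic, via \(-1\) on the first vertex, to the triangle with first morphism negated and third morphism \(\delta\). So the middle terms agree.
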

	
	\begin{proof}
		To simplify notation, we will write \(\cat{D} \coloneq \hocat{-}(\cat{P})/\hocat{b}(\cat{Q}_0)\). \par
		Combining \Cref{lemma-kernel} and \Cref{lemma-full} gives that the functor \(\Phi : \cat{E} \rightarrow \cat{D}\) induces a full and faithful extriangulated functor \(\Psi : \cat{C} \rightarrow \cat{D}\). According to \Cref{remark-full}, the natural homomorphism
		\begin{align}\label{eq-surj}
			\Hom{\hocat{-}(\cat{P})}(\iota(M),\Sigma\iota(N)) \rightarrow \Hom{\cat{D}}(\Phi(M),\Sigma\Phi(N))
		\end{align}
		is surjective. This means that every triangle
		\begin{align*}
			\Phi(M) \rightarrow X \rightarrow \Phi(N) \rightarrow \Sigma\Phi(M)
		\end{align*}
		in \(\cat{D}\) is the image under the quotient functor of a triangle
		\begin{align*}
			\iota(M) \rightarrow \tilde{X} \rightarrow \iota(N) \rightarrow \Sigma\iota(M)
		\end{align*}
		in \(\hocat{-}(\cat{P})\). In particular, the essential image of \(\Phi\), and hence of \(\Psi\), is extension closed. \par
		To conclude the proof, we need to show that the natural homomorphism 
		\begin{align*}
			\Ext{\cat{C}}(M,N) \rightarrow \Hom{\cat{D}}(\Psi(M),\Sigma\Psi(N))
		\end{align*}
		is an isomorphism. Let	
		\begin{align}\label{eq-conflation}
			L \inflation P \deflation M
		\end{align}
		be a conflation in \(\cat{C}\), with \(P\) a projective object. Applying \(\Hom{\cat{C}}(-,N)\) to this conflation, we obtain the exact sequence
		\begin{align}\label{eq-exact1}
			\Hom{\cat{C}}(P,N) \rightarrow \Hom{\cat{C}}(L,N) \deflation \Ext{\cat{C}}(M,N).
		\end{align}
		Similarly, applying the functor \(\Psi\) to \eqref{eq-conflation} we get the triangle
		\begin{align*}
			\Psi(L) \rightarrow \Psi(P) \rightarrow \Psi(M) \rightarrow \Sigma\Psi(L).
		\end{align*}
		Using \eqref{eq-surj}, we deduce \(\Hom{\cat{D}}(\Sigma^{-1}\Psi(P),\Psi(N)) = 0 \). Therefore applying \(\Hom{\cat{D}}(-,\Psi(N))\) to the above triangle gives the exact sequence
		\begin{align}\label{eq-exact2}
			\Hom{\cat{D}}(\Psi(P),\Psi(N)) \rightarrow \Hom{\cat{D}}(\Psi(L),\Psi(N)) \deflation \Hom{\cat{D}}(\Sigma^{-1}\Psi(M),\Psi(N)).
		\end{align}
		Comparing \eqref{eq-exact1} and \eqref{eq-exact2}, and using the fully faithfulness of \(\Psi\), we obtain the isomorphism
		\begin{align*}
			\Ext{\cat{C}}(M,N) \simeq \Hom{\cat{D}}(\Sigma^{-1}\Psi(M),\Psi(N)) \simeq \Hom{\cat{D}}(\Psi(M),\Sigma\Psi(N)),
		\end{align*}
		which concludes our proof.
	\end{proof}

	\section{The \(d\)-Auslander case}\label{d-Auslander}
	
	We will use \Cref{proposition-embedding} to characterize reduced algebraic extriangulated \(d\)-Auslander categories. Throughout this section, \(\cat{E}\) is an exact \(d\)-Auslander category, \(\cat{P}\) is the subcategory of projective objects, and \(\cat{Q}\) the subcategory of projective-injective objects.
	
	\begin{definition}
		Let \(k \in \mathbb{N}\). An object \(L\) in \(\cat{E}\) is said to have injective dominant dimension at least \(k\) if there exists an injective resolution of \(L\) whose first \(k\) terms are projective. In this case, we write \(\idd{L} \geq k\).
	\end{definition}
	
	\begin{remark}
		An object \(L\) in \(\cat{E}\) is projective-injective if and only if \(\idd{L}\geq k\) for any \(k \in \mathbb{N}\).
	\end{remark}
	
	\begin{lemma}\label{lemma-common-injective}
		Let \(f: A\inflation B\) and \(g: A\inflation C\) be inflations. Let \(i : B \inflation I_B\) and \(j : C \inflation I_C\) be inflations with injective codomains. Then there exist morphisms \(r : B \rightarrow I_C\) and \(s : C \rightarrow I_B\) making the following square commute
		\begin{center}
		\begin{tikzcd}
			A & C \\
			B & {I_B\oplus I_C}
			\arrow["g", tail, from=1-1, to=1-2]
			\arrow["f"', tail, from=1-1, to=2-1]
			\arrow["{\mat{s\\j}}", from=1-2, to=2-2]
			\arrow["{\mat{i\\r}}"', from=2-1, to=2-2]
		\end{tikzcd}.
		\end{center}
	\end{lemma}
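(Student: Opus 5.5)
We need morphisms \(r : B \rightarrow I_C\) and \(s : C \rightarrow I_B\) with \(\mat{i\\r}\circ f = \mat{s\\j}\circ g\). Componentwise this means
\begin{align*}
	i\circ f = s\circ g \quad\text{and}\quad r\circ f = j\circ g .
\end{align*}
The two conditions are independent, so each can be handled separately, and each is an extension problem along an inflation into an injective object.

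For the first equation, consider the morphism \(i\circ f : A \rightarrow I_B\). Since \(g : A \inflation C\) is an inflation and \(I_B\) is injective, \(\Hom{\cat{E}}(-,I_B)\) sends inflations to epimorphisms. Indeed, \(\Ext{\cat{E}}(\mathrm{cone}(g), I_B) = 0\), so the long exact sequence gives surjectivity of \(g^* : \Hom{\cat{E}}(C,I_B) \rightarrow \Hom{\cat{E}}(A,I_B)\). Hence there is \(s : C \rightarrow I_B\) with \(s\circ g = i\circ f\).

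Symmetrically, \(j\circ g : A \rightarrow I_C\) extends along the inflation \(f : A \inflation B\) because \(I_C\) is injective. This yields \(r : B\rightarrow I_C\) with \(r\circ f = j\circ g\).

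Putting the two components together gives the commutative square. No step is a real obstacle; the only point to check is the matrix convention, namely that \(\mat{i\\r}\) denotes the morphism into the direct sum with components \(i\) and \(r\). The hypotheses that \(i\) and \(j\) are inflations are not needed for commutativity. They are presumably used later, for example to see that \(\mat{i\\r}\) and \(\mat{s\\j}\) are again inflations, since a morphism having an inflation as a component of a split-mono-type column is an inflation.
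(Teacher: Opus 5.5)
Your proof is correct and is essentially the paper's argument: use injectivity of \(I_B\) to extend \(i\circ f\) along the inflation \(g\), giving \(s\), and injectivity of \(I_C\) to extend \(j\circ g\) along \(f\), giving \(r\); commutativity of the square is then checked componentwise. One small point of notation: in the exact setting, the object you call \(\mathrm{cone}(g)\) is just the cokernel of \(g\), that is, the third term of the conflation starting with \(g\).
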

	
	\begin{proof}
		As the objects \(I_B\) and \(I_C\) are injective, there exist morphisms \(r:B \rightarrow I_C\) and \(s : C \rightarrow I_B\) making the following diagrams commute
		\begin{center}
			\begin{tikzcd}
				A & C \\
				B \\
				{I_B}
				\arrow["g", tail, from=1-1, to=1-2]
				\arrow["f"', tail, from=1-1, to=2-1]
				\arrow["s", from=1-2, to=3-1]
				\arrow["i"', tail, from=2-1, to=3-1]
			\end{tikzcd}
			\qquad\qquad\qquad
			\begin{tikzcd}
				A & B \\
				C \\
				{I_C}
				\arrow["f", tail, from=1-1, to=1-2]
				\arrow["g"', tail, from=1-1, to=2-1]
				\arrow["r", from=1-2, to=3-1]
				\arrow["j"', tail, from=2-1, to=3-1]
			\end{tikzcd}.
		\end{center}
		It is straightforward to verify that all these morphisms fit into the desired square, which commutes.
	\end{proof}
	
	\begin{lemma}\label{lemma-idd}
		Suppose \(L\) fits in a conflation
		\begin{align*}
			L \inflation Q \deflation M
		\end{align*}
		with \(Q\) a projective-injective object. Then \(\idd{L} \geq k\) if and only if \(\idd{M} \geq k-1\).
	\end{lemma}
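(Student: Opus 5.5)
The plan is to prove the two implications separately, arguing directly in the exact category \(\cat{E}\) with injective coresolutions. By definition, \(\idd{L}\geq k\) means there is an injective resolution
\begin{align*}
0 \rightarrow L \inflation I^0 \rightarrow I^1 \rightarrow \cdots
\end{align*}
whose first \(k\) terms \(I^0,\dots,I^{k-1}\) are projective. Equivalently, there are conflations \(L_i \inflation I^i \deflation L_{i+1}\) with \(L_0 = L\) and \(I^i\) projective-injective for \(i<k\), continued by an injective resolution afterwards. Because the conventions for small \(k\) (namely \(k=0\) and \(k-1<0\)) are vacuous, I will treat those cases as trivial and assume \(k\geq 1\).

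\emph{(\(\Leftarrow\))} Suppose \(\idd{M}\geq k-1\), witnessed by a resolution \(M \inflation I^0 \rightarrow I^1\rightarrow\cdots\) whose first \(k-1\) terms are projective. Splicing the given conflation \(L\inflation Q\deflation M\) in front yields the injective resolution
\begin{align*}
L\inflation Q \rightarrow I^0 \rightarrow I^1 \rightarrow \cdots
\end{align*}
of \(L\). Since \(Q\) is projective-injective, its first \(k\) terms are projective, so \(\idd{L}\geq k\).

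\emph{(\(\Rightarrow\))} Suppose \(\idd{L}\geq k\), witnessed by \(L\inflation I^0 \deflation L_1\) with \(I^0\) projective-injective and \(\idd{L_1}\geq k-1\). We now have two inflations out of \(L\), namely \(L\inflation Q\) and \(L\inflation I^0\), both with injective codomain. Apply \Cref{lemma-common-injective} with \(A=L\), \(B=Q\), \(C=I^0\), taking \(i,j\) to be the identities. This gives a commutative square from \(L\) into \(Q\oplus I^0\). The key claim is that this square is a homotopy pushout-type square. Concretely, consider the inflation \(L\inflation Q\oplus I^0\), which is an inflation since its component \(L \inflation Q\) is one. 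I want to compute its cokernel \(X\) in two ways.
\begin{enumerate}
\item Since \(I^0\) is injective, the map \(L\to I^0\) factors through \(L\inflation Q\). A change of coordinates (an automorphism of \(Q\oplus I^0\)) therefore turns the inflation into \(\mat{u\\0}\), where \(u\) is the inflation \(L \inflation Q\). Hence \(X\simeq M\oplus I^0\).
\item Symmetrically, using that \(Q\) is injective, the same argument gives \(X\simeq L_1\oplus Q\).
\end{enumerate}
Comparing the two descriptions, we obtain \(M\oplus I^0\simeq L_1\oplus Q\).

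It then remains to show that injective dominant dimension is invariant under adding or removing projective-injective direct summands. That is, I need \(\idd{L_1\oplus Q}\geq k-1\), and that \(\idd{M\oplus I^0}\geq k-1\) implies \(\idd{M}\geq k-1\). Adding the summand \(Q\) is easy: take the direct sum of a witnessing resolution of \(L_1\) with the trivial resolution \(Q\xrightarrow{=}Q\). Removing a summand is the main obstacle. Here the plan is to take a witnessing resolution \(M\oplus I^0\inflation J^0\rightarrow J^1\rightarrow\cdots\), with \(J^0,\dots,J^{k-2}\) projective-injective, and compose with the split inclusion \(M\to M\oplus I^0\). The resulting map \(M\to J^0\) is an inflation in a weakly idempotent complete setting, since exact categories with enough injectives allow it: it is a composite of a split inflation and an inflation. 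Its cokernel is isomorphic to \(\mathrm{coker}(M\oplus I^0\to J^0)\oplus I^0\) by the same change-of-coordinates trick as above. I will then induct on \(k\), so that the remaining terms have projective-injective summands absorbed. Checking that the split monomorphism \(M\to M\oplus I^0\) is genuinely an inflation is a standard exact category axiom, so no idempotent completeness issue should arise.
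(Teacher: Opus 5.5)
Your proof is correct and is essentially the paper's argument. Both identify the cokernel of \(L\inflation Q\oplus I^0\) in two ways to get \(M\oplus I^0\simeq L_1\oplus Q\): the paper uses the pushout \(E\) and the split conflations \(I^0\inflation E\deflation M\) and \(Q\inflation E\deflation L_1\), while you use a coordinate change. Both then conclude from the invariance of \(\idd{-}\) under projective-injective summands.

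The paper takes that invariance for granted, so your check of it is a useful addition. No induction is needed there, though. The cokernel of \(M\inflation J^0\) is \(\mathrm{coker}(M\oplus I^0\inflation J^0)\oplus I^0\), and for it you only need the easy ``add a summand'' direction.
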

	
	\begin{proof}
		If \(\idd{M} \geq k-1\), then obviously \(\idd{L} \geq k\). \par
		Conversely, we assume \(k > 1\), otherwise there is nothing to prove. So if \(\idd{L}\geq k\), by definition there exists a projective-injective object \(\tilde{Q}\) and a conflation \(L \inflation \tilde{Q} \deflation \tilde{M}\) such that \(\idd{\tilde{M}} \geq k -1\). By \Cref{lemma-common-injective}, we have the following commutative diagram
		\begin{center}
		\begin{tikzcd}
			L & Q \\
			{\tilde{Q}} & {Q\oplus\tilde{Q}}
			\arrow[tail, from=1-1, to=1-2]
			\arrow[tail, from=1-1, to=2-1]
			\arrow[tail, from=1-2, to=2-2]
			\arrow[tail, from=2-1, to=2-2]
		\end{tikzcd}.
		\end{center}
		We construct the following pullback diagram
		\begin{center}
		\begin{tikzcd}
			L & Q & M \\
			{\tilde{Q}} & E & M \\
			{\tilde{M}} & {\tilde{M}}
			\arrow[tail, from=1-1, to=1-2]
			\arrow[tail, from=1-1, to=2-1]
			\arrow[two heads, from=1-2, to=1-3]
			\arrow[tail, from=1-2, to=2-2]
			\arrow[equals, from=1-3, to=2-3]
			\arrow[tail, from=2-1, to=2-2]
			\arrow[two heads, from=2-1, to=3-1]
			\arrow[two heads, from=2-2, to=2-3]
			\arrow[two heads, from=2-2, to=3-2]
			\arrow[equals, from=3-1, to=3-2]
		\end{tikzcd}.
		\end{center}
		Since \(Q\) and \(\tilde{Q}\) are projective-injective objects, we deduce an isomorphism \(M\oplus \tilde{Q} \simeq \tilde{M} \oplus Q\), both objects being cokernels of the morphism \(L\inflation Q\oplus\tilde{Q}\). It follows that \(\idd{M} = \idd{\tilde{M}} \geq k-1\), which concludes the proof.
	\end{proof}
	
	\begin{proposition}\label{proposition-idd}
		Let \(L\) be an object such that \(k \coloneq \pd{L} < d+1\). Then there is a conflation
		\begin{align*}
			L \inflation M \deflation N
		\end{align*}
		with \(\idd{M} \geq d-k\) and \(\idd{N} \geq d-1\).
	\end{proposition}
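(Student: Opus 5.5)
The proof goes by induction on \(k=\pd{L}\). The plan is to splice together a projective presentation of \(L\) with the conflation given by the induction hypothesis, using \Cref{lemma-common-injective} and \Cref{lemma-idd} to keep track of injective dominant dimensions.

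\emph{Base case \(k=0\).} Here \(L\) is projective. Since the dominant dimension of \(\cat{E}\) is at least \(d+1\), there is a conflation \(L\inflation Q_1\deflation X_2\) with \(Q_1\in\cat{Q}\). The object \(X_2\) has an injective coresolution \(X_2\inflation Q_2\to\cdots\to Q_{d+1}\to X_{d+2}\) whose first \(d\) terms lie in \(\cat{Q}\), so \(\idd{X_2}\geq d\geq d-1\). Since \(\idd{Q_1}\) is arbitrarily large, we may take \(M=Q_1\) and \(N=X_2\).

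\emph{Inductive step, \(1\leq k\leq d\).} Choose a conflation \(K\inflation P\deflation L\) with \(P\) projective and \(\pd{K}=k-1\). By induction there is a conflation \(K\inflation M'\deflation N'\) with \(\idd{M'}\geq d-k+1\) and \(\idd{N'}\geq d-1\). Embed \(P\inflation Q_1\deflation X\) with \(Q_1\in\cat{Q}\) and \(\idd{X}\geq d\), as in the base case. Since \(\idd{M'}\geq 1\), also embed \(M'\inflation Q''\) with \(Q''\in\cat{Q}\).

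We now have two inflations out of \(K\): the composite \(K\inflation P\inflation Q_1\), and the composite \(K\inflation M'\inflation Q''\). Apply \Cref{lemma-common-injective} to the inflations \(K\inflation P\) and \(K\inflation M'\), with the injective hulls \(P\inflation Q_1\) and \(M'\inflation Q''\). This yields a single inflation \(K\inflation Q_1\oplus Q''\) that factors both through \(P\inflation Q_1\oplus Q''\) and through \(M'\inflation Q_1\oplus Q''\). Both of these maps are inflations: the first because one component is \(P\inflation Q_1\), the second because one component is \(M'\inflation Q''\) (a standard exact-category fact). Let \(C\) be the cokernel of \(K\inflation Q_1\oplus Q''\). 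By the Noether isomorphism (\(3\times3\)-lemma) applied to each factorization, we get conflations
\begin{align*}
L\inflation C\deflation X', \qquad N'\inflation C\deflation M''',
\end{align*}
where \(X'=\operatorname{coker}(P\inflation Q_1\oplus Q'')\) and \(M'''=\operatorname{coker}(M'\inflation Q_1\oplus Q'')\).

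The idd bounds on \(X'\) and \(M'''\) now follow from \Cref{lemma-idd}. Since \(Q_1\oplus Q''\) is projective-injective and \(\idd{P}\geq d+1\), we get \(\idd{X'}\geq d\geq d-1\). Similarly, from \(\idd{M'}\geq d-k+1\) we get \(\idd{M'''}\geq d-k\). The candidate conflation is therefore \(L\inflation C\deflation X'\), that is, \(M=C\) and \(N=X'\).

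It remains to show \(\idd{C}\geq d-k\). Since \(k\geq1\), we have \(\idd{N'}\geq d-1\geq d-k\), and \(\idd{M'''}\geq d-k\). So it suffices to know that the property \(\idd{-}\geq m\) is closed under extensions. This is the step I expect to be the main obstacle, since it is not among the earlier results. I would prove it by a horseshoe-lemma argument for injective coresolutions. Given \(A\inflation B\deflation D\) and inflations \(A\inflation Q_A\), \(D\inflation Q_D\) into projective-injectives, injectivity of \(Q_A\) lets the first map extend along \(A\inflation B\). This gives an inflation \(B\inflation Q_A\oplus Q_D\) whose cokernel is an extension of the cokernel of \(D\inflation Q_D\) by the cokernel of \(A\inflation Q_A\). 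Iterating \(m\) times produces an injective coresolution of \(B\) whose first \(m\) terms are projective-injective. The points needing care are checking that the constructed map is an inflation and that the \(3\times3\)-lemma applies at each stage; only injectivity of the \(Q\)'s and the exact structure are used. With this closure property, \(\idd{C}\geq d-k\), which completes the induction.
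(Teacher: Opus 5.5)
Your proof is correct and is essentially the paper's argument: the same induction on \(\pd{L}\), the same use of \Cref{lemma-common-injective} to embed \(K\) into a single projective-injective through both \(P\) and \(M'\), the same two conflations from the \(3\times 3\) lemma with \Cref{lemma-idd} applied to their cokernels, and the same horseshoe-lemma step showing that \(\idd{-}\geq m\) is closed under extensions. The only difference is the base case: the paper uses the trivial conflation \(L \inflation L \deflation 0\), whereas you embed \(L\) into a projective-injective, which is equally valid.
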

	
	\begin{proof}
		We prove the proposition by induction on \(k\). If \(k = 0\), then \(L\) is projective and the conflation \(L \inflation L \deflation 0\) has the desired properties. \par
		If \(k>0\), then there exists a conflation of the form \(L' \inflation P \deflation L\) with \(P\) a projective object. By induction, \(L'\) fits in a conflation of the form \(L' \inflation M' \deflation N'\) with \(\idd{M'} \geq d - k + 1\) and \( \idd{N'} \geq d-1\). Notice that \(\idd{M'} > 0\) by hypothesis. \par
		According to \Cref{lemma-common-injective}, there exists a projective-injective object \(Q\) which fits in a commutative diagram
		\begin{center}
		\begin{tikzcd}
			{L'} & P \\
			{M'} & Q
			\arrow[tail, from=1-1, to=1-2]
			\arrow[tail, from=1-1, to=2-1]
			\arrow[tail, from=1-2, to=2-2]
			\arrow[tail, from=2-1, to=2-2]
		\end{tikzcd}.
		\end{center}
		We consider the following pullback diagrams
		\begin{center}
		\begin{tikzcd}
			{L'} & P & L \\
			{L'} & Q & M \\
			& N & N
			\arrow[tail, from=1-1, to=1-2]
			\arrow[equals, from=1-1, to=2-1]
			\arrow[two heads, from=1-2, to=1-3]
			\arrow[tail, from=1-2, to=2-2]
			\arrow[tail, from=1-3, to=2-3]
			\arrow[tail, from=2-1, to=2-2]
			\arrow[two heads, from=2-2, to=2-3]
			\arrow[two heads, from=2-2, to=3-2]
			\arrow[two heads, from=2-3, to=3-3]
			\arrow[equals, from=3-2, to=3-3]
		\end{tikzcd}
		\qquad\qquad\qquad
		\begin{tikzcd}
			{L'} & {M'} & {N'} \\
			{L'} & Q & M \\
			& E & E
			\arrow[tail, from=1-1, to=1-2]
			\arrow[equals, from=1-1, to=2-1]
			\arrow[two heads, from=1-2, to=1-3]
			\arrow[tail, from=1-2, to=2-2]
			\arrow[tail, from=1-3, to=2-3]
			\arrow[tail, from=2-1, to=2-2]
			\arrow[two heads, from=2-2, to=2-3]
			\arrow[two heads, from=2-2, to=3-2]
			\arrow[two heads, from=2-3, to=3-3]
			\arrow[equals, from=3-2, to=3-3]
		\end{tikzcd}.
		\end{center}
		By \Cref{lemma-idd}, we have \(\idd{E} \geq d-k\) and \(\idd{N} \geq d-1\). From the horseshoe lemma, we deduce that \(\idd{M} \geq d-k\). Thus the conflation \(L \inflation M \deflation N\) has the desired properties. 
	\end{proof}
	
	An immediate consequence of \Cref{proposition-idd} is the following corollary.
	
	\begin{corollary}\label{corollary-pi-envelope}
		Let \(L\) be such that \(\pd{L} < d+1\). Then there exists an inflation \(L \inflation Q\) where \(Q\) is projective-injective.  \qed
	\end{corollary}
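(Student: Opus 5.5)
The plan is to read the corollary directly off \Cref{proposition-idd}. Set \(k \coloneq \pd{L}\); by hypothesis \(k < d+1\), so \Cref{proposition-idd} applies and produces a conflation
\begin{align*}
	L \inflation M \deflation N
\end{align*}
with \(\idd{M} \geq d-k\) and \(\idd{N} \geq d-1\). It is enough to find an inflation \(M \inflation Q\) with \(Q\) projective-injective, since inflations compose in the exact category \(\cat{E}\), and then \(L \inflation M \inflation Q\) is the desired inflation.

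I would split according to the value of \(d-k\). If \(d - k \geq 1\), then \(\idd{M} \geq 1\). By definition this gives an injective resolution of \(M\) whose first term is projective, that is, an inflation \(M \inflation Q\) with \(Q\) injective and projective. So this case is finished.

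The remaining case is \(k = d\), where \(\idd{M} \geq 0\) carries no information, and I expect this to be the main obstacle. Here I would look at the construction inside the proof of \Cref{proposition-idd} rather than only at its statement. For \(k > 0\), the object \(M\) is built as the pushout of \(L' \inflation Q\) along \(L' \inflation P\), where \(Q\) is the projective-injective object given by \Cref{lemma-common-injective}, and \(M\) then sits in a conflation \(M \deflation E\) together with \(Q \deflation E\). This means I should look for the inflation of \(L\) itself, without passing through \(M\).

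The pushout diagram contains the conflation \(L \inflation M \deflation N\). It also contains the square relating \(P \inflation Q\) to \(L \inflation M\), and in it \(Q\) appears as the middle term of \(L' \inflation Q \deflation M\). Read this way, the first pushout diagram gives a conflation \(L' \inflation Q \deflation M\), and \(M\) receives \(L\) as a subobject. The cleaner route is to argue by induction directly on \(k\). When \(k=0\), \(L\) is projective, and the dominant dimension axiom (3) of the \(d\)-Auslander definition supplies an inflation \(L \inflation Q_1\) with \(Q_1\) projective-injective. When \(k>0\), take a conflation \(L' \inflation P \deflation L\) with \(\pd{L'} = k-1\), embed \(L'\) and \(P\) compatibly into a projective-injective object via \Cref{lemma-common-injective} and the inductive hypothesis, and push out. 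This yields \(L \inflation C\), where \(C\) is the cokernel of \(L' \inflation Q\).

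It then remains to show that \(C\) embeds into a projective-injective object. Since \(\idd{L'} \geq d-(k-1) \geq 1\), I would use \Cref{lemma-idd} to get \(\idd{C} \geq d-k\), together with the bound \(\idd{N}\geq d-1\). If \(d = k \geq 1\), the bound \(\idd{N} \geq d-1\) combined with a horseshoe argument on \(L \inflation M \deflation N\) should still force \(\idd{M}\geq 1\). This is the step I would verify most carefully.
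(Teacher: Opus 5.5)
Your handling of the case \(\pd{L}\leq d-1\) is exactly the intended deduction from \Cref{proposition-idd}. Your diagnosis of the case \(k=d\) is also correct: as displayed, the bound \(\idd{M}\geq d-k\) is empty there. This case is genuinely needed, for instance for \(C^{-1}_1\) in the proof that the essential image of \(\Phi\) is \(\cat{D}^{[-d-1,0]}\), since its projective dimension may equal \(d\).

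However, your argument for \(k=d\) has a gap, and it fails at two points.

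First, nothing provides \(\idd{L'}\geq d-(k-1)\). \Cref{proposition-idd} bounds \(\idd{M'}\), not \(\idd{L'}\), and an inductive hypothesis of the form ``\(L'\) embeds into a projective-injective'' only gives \(\idd{L'}\geq 1\). Even granting your bound, \Cref{lemma-idd} applied to \(L'\inflation Q\deflation C\) returns \(\idd{C}\geq d-k=0\), which is the same vacuous statement. In fact \(\idd{C}\geq 1\) is equivalent to \(\idd{L'}\geq 2\), so an induction on the bare statement of the corollary cannot close.

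Second, the horseshoe lemma applied to \(L\inflation M\deflation N\) only yields \(\idd{M}\geq\min(\idd{L},\idd{N})\). This presupposes \(\idd{L}\geq 1\), which is the conclusion you want. For \(d=1\) the bound \(\idd{N}\geq 0\) is empty anyway.

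The missing idea is that the off-by-one originates in the base case. By the dominant dimension axiom, a projective \(P\) has \(\idd{P}\geq d+1\), not merely \(\idd{P}\geq d\). Running the induction of \Cref{proposition-idd} with this input gives a sharper statement: if \(\pd{L}=k\leq d\), there is a conflation \(L\inflation M\deflation N\) with \(\idd{M}\geq d+1-k\) and \(\idd{N}\geq d\).

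The base case is \(L\inflation L\deflation 0\). In the inductive step:
\begin{enumerate}
\item \Cref{lemma-idd} applied to \(P\inflation Q\deflation N\) gives \(\idd{N}\geq d\).
\item \Cref{lemma-idd} applied to \(M'\inflation Q\deflation E\), with \(\idd{M'}\geq d+2-k\), gives \(\idd{E}\geq d+1-k\).
\item The horseshoe lemma must be applied to \(N'\inflation M\deflation E\), not to \(L\inflation M\deflation N\). This gives \(\idd{M}\geq\min(d,d+1-k)=d+1-k\geq 1\).
\end{enumerate}
Then the composite \(L\inflation M\inflation Q'\) works uniformly for all \(k\leq d\).

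If you prefer your direct induction, you must instead carry the quantitative hypothesis \(\idd{X}\geq d+1-\pd{X}\). This additionally requires the estimate \(\idd{L}\geq\min(\idd{C},\idd{N}+1)\) for a conflation \(L\inflation C\deflation N\), which can be obtained from \Cref{lemma-idd} and the horseshoe lemma.
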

	
	Let us write \(\cat{D} \coloneq \hocat{-}(\cat{P})/\hocat{b}(\cat{Q})\), and let \(\cat{D}^{[-d-1,0]}\) be the essential image of \(\hocat{[-d-1,0]}(\cat{P})\) under the quotient functor \(\hocat{-}(\cat{P}) \rightarrow \cat{D}\). We can now prove the main theorem of this section.
	
	\begin{theorem}
		The essential image of the functor \(\Phi : \cat{E} \rightarrow \cat{D}\) is \(\cat{D}^{[-d-1,0]}\).
	\end{theorem}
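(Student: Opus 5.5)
Two inclusions are needed: the essential image of \(\Phi\) lies in \(\cat{D}^{[-d-1,0]}\), and every complex in \(\hocat{[-d-1,0]}(\cat{P})\) becomes isomorphic in \(\cat{D}\) to some \(\Phi(X)\).

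\textbf{Step 1: the image of \(\Phi\) lies in \(\cat{D}^{[-d-1,0]}\).} Let \(X \in \cat{E}\). By the global dimension condition of the \(d\)-Auslander definition, \(X\) has a projective resolution
\begin{align*}
0 \to P_{d+1} \to P_d \to \cdots \to P_1 \to X,
\end{align*}
obtained by splicing the conflations \(X_i \inflation P_i \deflation X_{i-1}\) with \(X_{d+1}=P_{d+1}\). Hence \(\iota(X)\) is isomorphic in \(\hocat{-}(\cat{P})\) to the complex \(P_{d+1}\to\cdots\to P_1\), placed in degrees \([-d,0]\). (The interval \([-d-1,0]\) also allows one more term.) So \(\Phi(X) \in \cat{D}^{[-d-1,0]}\).

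\textbf{Step 2: every object of \(\cat{D}^{[-d-1,0]}\) is in the image.} Take \(P^\bullet = (P^{-d-1}\to\cdots\to P^0)\). If \(P^{-d-1}\to P^{-d}\) were an inflation, I would read \(P^\bullet\) as a resolution of its cokernel. In general it is not, so I will modify \(P^\bullet\) inside \(\cat{D}\) using objects of \(\hocat{b}(\cat{Q})\). I proceed inductively from the left, replacing \(P^\bullet\) by complexes isomorphic in \(\cat{D}\) whose leftmost differentials are inflations.

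The base step uses the dominant dimension condition. It gives an inflation \(P^{-d-1}\inflation Q_1\) with \(Q_1\) projective-injective. Then
\begin{align*}
P^{-d-1}\to P^{-d}\oplus Q_1
\end{align*}
is an inflation: the component into \(Q_1\) is an inflation, and a standard argument in exact categories makes the combined map one too. Adding \(Q_1\) in degree \(-d\) changes \(P^\bullet\) only by a mapping cone against a complex concentrated in \(\cat{Q}\). More precisely, there is a triangle \(P^\bullet \to P'^\bullet \to Q_1[d]\to\) (up to shift conventions), where \(P'^\bullet\) is the new complex. So \(P'^\bullet \cong P^\bullet\) in \(\cat{D}\).

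For the next degrees, let \(C_j\) be the cokernel of the current leftmost inflation. I need an inflation \(C_j \inflation Q\) with \(Q\) projective-injective in order to repeat the trick at the next differential. \(C_j\) has a projective resolution of length \(j<d+1\), so \Cref{corollary-pi-envelope} supplies such an inflation. With it, I continue building the complex so that all differentials are inflations onto images, reaching degree \(0\).

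\textbf{Step 3: termination and degree bookkeeping.} At the end, the modified complex is a genuine resolution of \(X \coloneq\) cokernel of the final differential into degree \(0\). Thus \(\iota(X)\) is isomorphic to the modified complex, which is isomorphic to \(P^\bullet\) in \(\cat{D}\). The terms \(Q\) appear in degrees \(-d,\ldots,0\) with only bounded projective-injective corrections, so each step is a cone with a complex in \(\hocat{b}(\cat{Q})\), hence an isomorphism in \(\cat{D}\).

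The main obstacle is making the cone manipulation precise. Adding \(Q\) to degree \(n+1\) while mapping \(C_j\) into it corresponds to a cone along a morphism \(P^\bullet_{\le n}\to Q[-n-1]\). I must check that the map \(P^n\to Q\) kills the image of \(d^{n-1}\), which holds because it factors through the cokernel \(C_j\), and that the modified differential squares to zero. I also need the projective-dimension bound \(\pd{C_j}<d+1\) at each stage, which must be tracked so that \Cref{corollary-pi-envelope} applies throughout; this is exactly why the interval has length \(d+2\).
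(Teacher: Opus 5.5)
Your proposal is correct and is essentially the paper's proof. Working from the left, you add a projective-injective summand in each degree \(-d,\dots,0\), using \Cref{corollary-pi-envelope} on the current cokernel \(C\) (of projective dimension at most \(d\)), so that the new differential factors as \(P^n\deflation C\xrightarrow{\mat{h_1\\ h_2}}P^{n+1}\oplus Q\); here \(h_1\) is the factorization of the old differential through \(C\), which your ``repeat the trick'' uses implicitly and should state. Each step changes the complex only by a cone with an object of \(\hocat{b}(\cat{Q})\).

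There are two small slips to fix.
\begin{enumerate}
\item In Step 1, \(X_{d+1}\) is an extra projective term, not \(P_{d+1}\). So the resolution \(X_{d+1}\to P_{d+1}\to\cdots\to P_1\) fills exactly the degrees \([-d-1,0]\).
\item The degreewise split sequence \(0\to\Sigma^{d}Q_1\to P'^{\bullet}\to P^{\bullet}\to 0\) gives the triangle \(\Sigma^{d}Q_1\to P'^{\bullet}\to P^{\bullet}\to\Sigma^{d+1}Q_1\), i.e.\ \(P'^{\bullet}\simeq\Sigma^{-1}\cone{P^{\bullet}\to\Sigma^{d+1}Q_1}\), as in the paper. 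The orientation you wrote fails, because the projection \(P'^{\bullet}\to\Sigma^{d}Q_1\) is not a chain map.
\end{enumerate}
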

	
	\begin{proof}
		Since \(\cat{E}\) has global dimension \(d+1\), the image of \(\Phi\) lies in \(\cat{D}^{[-d-1,0]}\). \par
		Now let \(P^\bullet\) be an object in \(\cat{D}^{[-d-1,0]}\), which is represented by the complex
		\begin{align*}
			\cdots \rightarrow 0 \rightarrow P^{-d-1} \xrightarrow{f^{-d-1}} P^{-d} \xrightarrow{f^{-d}} \cdots \xrightarrow{f^{-1}} P^0 \rightarrow 0 \rightarrow \cdots.
		\end{align*}
		We would like to show that \(P^\bullet \simeq \Phi(M)\) for some \(M \in \cat{E}\). Our strategy is to construct inductively a sequence of complexes \(P^\bullet_{d+1},...,P^\bullet_0\) in \(\cat{D}^{[-d-1,0]}\) such that
		\begin{enumerate}
			\item \(P^\bullet_{d+1} \coloneq P^\bullet\),
			\item for all \(1 \leq k \leq d+1\) there is an isomorphism \(P^\bullet_k \simeq P^\bullet_{k-1}\) in \(\cat{D}\),
			\item \(P^\bullet_k\) is exact up to degree \(k\), which means that for any \(k \leq l \leq d+1\) there are conflations 
			\begin{center}
			\begin{tikzcd}
				{C_k^{-l-1}} & {P_k^{-l}} & {C_k^{-l}}
				\arrow["{a_k^{-l-1}}", tail, from=1-1, to=1-2]
				\arrow["{b_k^{-l}}", two heads, from=1-2, to=1-3]
			\end{tikzcd}
			\end{center}
			in \(\cat{E}\) such that \(f^{-l}_k = a_{k}^{-l} \circ b_k^{-l} \) for \(k+1 \leq l \leq d+1\).
		\end{enumerate}
		If such a sequence exists, then the third property implies that \(P^\bullet_0\) is in the essential image of \(\Phi\). \par
		Certainly \(P^\bullet_{d+1} \coloneq P^\bullet\) is exact up to degree \(d+1\) (which is an empty condition). Suppose now we have constructed \(P^\bullet_k\) for some \(k > 0\). Then by assumption we have the following solid diagram
		\begin{center}
		\begin{tikzcd}
			& {C^{-k-1}_k} &&& \\
			{P^{-k-1}_k} && {P^{-k}_k} && {P^{-k+1}_k} \\
			&&& {C^{-k}_k}
			\arrow["{a^{-k-1}_k}", tail, from=1-2, to=2-3]
			\arrow["{b^{-k-1}_k}", two heads, from=2-1, to=1-2]
			\arrow["{f^{-k-1}_k}", from=2-1, to=2-3]
			\arrow["{f^{-k}_k}", from=2-3, to=2-5]
			\arrow["{b_k^{-k}}"', two heads, from=2-3, to=3-4]
			\arrow["h_1"', dashed, from=3-4, to=2-5]
		\end{tikzcd}
		\end{center}
		which commutes in \(\cat{E}\). As \(f^{-k}_k\circ a^{-k-1}_k \circ b^{-k-1}_k = f^{-k}_k\circ f^{-k-1}_k = 0\) and \(b^{-k-1}_k\) is an epimorphism, we obtain the dashed morphism \(h_1 : C^{-k}_k \rightarrow P^{-k+1}_k\) such that \(f^{-k}_k = h_1 \circ b_{k}^{-k}\). \par
		But \(\pd{C_k^{-k}} \leq d+1-k < d+1\), so \Cref{corollary-pi-envelope} implies there exists an inflation \(h_2 :C^{-k}_k \inflation Q\), with \(Q\) a projective-injective object. We then define \(P^\bullet_{k-1}\) as the complex
		\begin{align*}
			\cdots \xrightarrow{f^{-k-1}_k} P^{-k}_k \xrightarrow{\mat{h_1 \\ h_2}\circ b_k^{-k}} P_k^{-k+1} \oplus Q \xrightarrow{\mat{f^{-k+1}_k & 0}} P_k^{-k+2} \xrightarrow{f_k^{-k+2}} \cdots \xrightarrow{f^{-1}_k} P_k^{0} \rightarrow 0 \rightarrow \cdots.
		\end{align*}
		As \(P^\bullet_{k-1} \simeq \Sigma^{-1} \cone{P^\bullet_{k} \rightarrow \Sigma^{k}Q}\) in \(\hocat{-}(\cat{P})\), we have \(P^\bullet_{k-1} \simeq P^\bullet_k\) in \(\cat{D}\). Because \(\mat{h_1 \\ h_2}\) is an inflation in \(\cat{E}\), the complex \(P^\bullet_{k-1}\) is exact up to degree \(k-1\). \par
		Repeating this argument, we obtain the desired sequence of complexes with isomorphisms
		\begin{align*}
			P^\bullet_{d+1} \simeq P^\bullet_d \simeq \cdots \simeq P^\bullet_0
		\end{align*}
		in \(\cat{D}\). As \(P^\bullet_0\) is in the essential image of \(\Phi\), so is \(P^\bullet = P^\bullet_{d+1}\), which completes the proof. 
	\end{proof}
	
	\begin{corollary}\label{corollary-reduced-d-Auslander}
		The functor \(\Psi : \cat{E}/[\cat{Q}] \rightarrow \cat{D}\) of \Cref{proposition-embedding} induces an equivalence of extriangulated categories \(\Psi : \cat{E}/[\cat{Q}] \rightarrow \cat{D}^{[-d-1,0]}\). \qed
	\end{corollary}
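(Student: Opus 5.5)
This corollary is obtained by combining \Cref{proposition-embedding} with the theorem just proved. The plan is to apply \Cref{proposition-embedding} with \(\cat{C} = \cat{E}/[\cat{Q}]\), with the exact category \(\cat{E}\) itself as the ambient exact category, and with \(\cat{Q}_0 = \cat{Q}\) the full subcategory of projective-injectives. Since \(\cat{Q}\) consists of projective-injective objects, the example after the definition of extriangulated ideal quotients shows that \(\cat{E}/[\cat{Q}]\) is an algebraic extriangulated category. So the hypotheses of that subsection are met, and \(\cat{D} = \hocat{-}(\cat{P})/\hocat{b}(\cat{Q})\) is the triangulated category constructed there.

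\Cref{proposition-embedding} then gives that \(\Psi : \cat{E}/[\cat{Q}] \rightarrow \cat{D}\) is fully faithful. It also gives that its essential image is extension closed in \(\cat{D}\), and that \(\Psi\) is an equivalence of extriangulated categories onto this essential image, where the image carries the extriangulated structure induced from the triangulated category \(\cat{D}\).

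It remains to identify the essential image. Since \(\Psi\) is induced by \(\Phi\) through the quotient \(\cat{E} \rightarrow \cat{E}/[\cat{Q}]\), which is the identity on objects, the essential images of \(\Psi\) and \(\Phi\) coincide. By the preceding theorem, the essential image of \(\Phi\) is \(\cat{D}^{[-d-1,0]}\). Hence \(\Psi\) corestricts to an equivalence \(\cat{E}/[\cat{Q}] \rightarrow \cat{D}^{[-d-1,0]}\). In particular \(\cat{D}^{[-d-1,0]}\) is extension closed in \(\cat{D}\), so it inherits an extriangulated structure, and the equivalence respects it.

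The only point needing care is to make the extriangulated structure on \(\cat{D}^{[-d-1,0]}\) explicit: it is the structure induced as an extension-closed subcategory of the triangulated category \(\cat{D}\). With that convention the statement is exactly the last assertion of \Cref{proposition-embedding}, so no further obstacle is expected.
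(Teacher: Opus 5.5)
Your proposal is correct and is essentially the paper's argument. The corollary is marked \qed because it follows by applying \Cref{proposition-embedding} with the exact category \(\cat{E}\) itself and \(\cat{Q}_0 = \cat{Q}\), and then identifying the essential image of \(\Psi\), which equals that of \(\Phi\), with \(\cat{D}^{[-d-1,0]}\) via the theorem just proved.
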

	
	\section{Truncated homotopy categories}\label{Main}
	
	Extriangulated \(d\)-Auslander categories behave in many ways like truncated homotopy categories, that is, categories of the form \(\hocat{[-d-1,0]}(\cat{A})\) for some additive category \(\cat{A}\). The purpose of this section is to give a complete description of when an algebraic extriangulated \(d\)-Auslander category \(\cat{C}\) admits an extriangulated ideal quotient which is equivalent to a category \(\hocat{[-d-1,0]}(\cat{A})\). \par
	We assume \(\cat{E}\) is an exact \(d\)-Auslander category. We denote by \(\cat{P}\) its subcategory of projective objects, by \(\cat{I}\) its subcategory of injective objects, and by \(\cat{Q}\) its subcategory of projective-injective objects. 
	
	\begin{lemma}\label{lemma-hom-to-projective}
		Suppose \(\Extg{\cat{E}}^k(I,P) = 0\) for any \(1\leq k \leq d\), \(I \in \cat{I}\) and \(P \in \cat{P}\). Let \(M\) be an object in \(\cat{E}\) such that \(\id{M} < d+1\). Then for any projective \(P'\), we have
		\begin{align*}
			\Hom{\cat{E}}(M,P') \subseteq 
			\begin{cases}
				[\cat{Q}](M,P') & \text{if}\quad \pd{M} < d+1, \\
				[\cat{I}](M,P') & \text{if}\quad \pd{M} = d+1.
			\end{cases}
		\end{align*}
	\end{lemma}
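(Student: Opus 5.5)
The plan is to show first that, under the hypotheses, every morphism \(f : M \rightarrow P'\) extends along an inflation \(M \inflation I^0\) with \(I^0\) injective. This already gives the case \(\pd{M} = d+1\). The case \(\pd{M} < d+1\) then follows from \Cref{corollary-pi-envelope}.

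\emph{Step 1: dimension shifting.} Since \(\id{M} = m < d+1\), there is an injective coresolution consisting of conflations
\begin{align*}
	M \inflation I^0 \deflation K^1, \quad K^1 \inflation I^1 \deflation K^2, \quad \ldots, \quad K^{m-1} \inflation I^{m-1} \deflation K^m
\end{align*}
with all \(I^j\) injective and \(K^m = I^m\) injective, where \(m \leq d\). If \(m = 0\), then \(M\) is itself injective and \(f\) trivially lies in \([\cat{I}]\). So assume \(m \geq 1\).

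For \(1 \leq j \leq m-1\), apply \(\Hom{\cat{E}}(-,P')\) to \(K^j \inflation I^j \deflation K^{j+1}\). The long exact sequence contains
\begin{align*}
	\Extg{\cat{E}}^j(I^j,P') \rightarrow \Extg{\cat{E}}^j(K^j,P') \rightarrow \Extg{\cat{E}}^{j+1}(K^{j+1},P').
\end{align*}
The first term vanishes by hypothesis, since \(1 \leq j \leq d\). Hence each connecting map \(\Extg{\cat{E}}^j(K^j,P') \rightarrow \Extg{\cat{E}}^{j+1}(K^{j+1},P')\) is injective. Composing these maps, we get an injection
\begin{align*}
	\Extg{\cat{E}}^1(K^1,P') \hookrightarrow \Extg{\cat{E}}^m(K^m,P') = \Extg{\cat{E}}^m(I^m,P').
\end{align*}
The right-hand side vanishes because \(1 \leq m \leq d\). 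Therefore \(\Extg{\cat{E}}^1(K^1,P') = 0\).

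\emph{Step 2: extension through \(I^0\).} Applying \(\Hom{\cat{E}}(-,P')\) to \(M \inflation I^0 \deflation K^1\) gives the exact sequence
\begin{align*}
	\Hom{\cat{E}}(I^0,P') \rightarrow \Hom{\cat{E}}(M,P') \rightarrow \Extg{\cat{E}}^1(K^1,P') = 0.
\end{align*}
So every \(f : M \rightarrow P'\) factors as \(f = g \circ \iota_0\), where \(\iota_0 : M \inflation I^0\) and \(g : I^0 \rightarrow P'\). This proves \(\Hom{\cat{E}}(M,P') \subseteq [\cat{I}](M,P')\), which settles the case \(\pd{M} = d+1\).

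\emph{Step 3: the case \(\pd{M} < d+1\).} By \Cref{corollary-pi-envelope}, there is an inflation \(u : M \inflation Q\) with \(Q\) projective-injective. Since \(I^0\) is injective and \(u\) is an inflation, \(\iota_0\) extends along \(u\): there is \(v : Q \rightarrow I^0\) with \(\iota_0 = v \circ u\). Then \(f = (g \circ v) \circ u\) factors through \(Q \in \cat{Q}\), so \(f \in [\cat{Q}](M,P')\).

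The only delicate point is Step 1. One has to check that every Ext-degree used in the dimension shift lies in the range \(1 \leq k \leq d\) where the hypothesis applies. This is exactly where the assumption \(\id{M} < d+1\), that is \(m \leq d\), is needed: it makes the final group \(\Extg{\cat{E}}^m(I^m,P')\) vanish. With \(m = d+1\) the argument would break down.
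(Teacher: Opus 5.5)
Your proof is correct, and it takes a genuinely different route from the paper's.

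\textbf{The paper's route.} It passes to $\cat{E}/[\cat{Q}]\simeq\cat{D}^{[-d-1,0]}$ via \Cref{corollary-reduced-d-Auslander} and identifies the injectives with $\Sigma^{d+1}\Psi(\cat{P})$. It rewrites the hypothesis as $\Hom{\cat{D}^{[-d-1,0]}}(\Sigma^k\Psi(P),\Psi(P'))=0$ for $1\leq k\leq d$. It then inducts on $\pd{M}$ using the brutal truncation conflation $\tau_{-l+1}\tilde{P}^\bullet\inflation\tilde{P}^\bullet\deflation\Sigma^{l}\tilde{P}^{-l}$. By induction, every morphism $\Psi(M)\rightarrow\Psi(P')$ factors through $\Sigma^{l}\tilde{P}^{-l}$. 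That object has no nonzero maps to $\Psi(P')$ when $l\leq d$, and is injective when $l=d+1$.

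\textbf{Your route.} You never leave $\cat{E}$. Dimension shifting along an injective coresolution of length at most $d$ uses the hypothesis exactly in its given form. It yields a stronger intermediate fact: every $f:M\rightarrow P'$ extends along $M\inflation I^0$, and hence along any inflation out of $M$, regardless of $\pd{M}$. The projective dimension only enters when upgrading $[\cat{I}]$ to $[\cat{Q}]$ via \Cref{corollary-pi-envelope}.

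\textbf{What each buys.} Your argument is more elementary. It avoids tracking how injective and projective dimensions behave under truncation, which the paper's induction leaves to the reader. The only input you need from the $d$-Auslander structure is \Cref{corollary-pi-envelope}. The paper's route also depends on that corollary, through \Cref{corollary-reduced-d-Auslander}. In exchange, the paper's version keeps the argument in the picture of complexes used throughout that section.

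\textbf{A small gap.} For $m=0$ you only record the $[\cat{I}]$ conclusion. Step~3 also covers the $[\cat{Q}]$ case there once you take $I^0=M$ and $\iota_0=\mathrm{id}_M$.
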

	
	\begin{proof}
		We begin by noting that it is enough to prove the statement for the reduced extriangulated category \(\cat{E}/[\cat{Q}]\), which, according to \Cref{corollary-reduced-d-Auslander}, is equivalent as an extriangulated category, under the functor \(\Psi\), to \(\cat{D}^{[-d-1,0]}\). \par
		We see that an object \(I \in \cat{E}\) is injective if and only if it has the form \(\Psi(I) \simeq \Sigma^{d+1}\Psi(P)\) for some projective \(P \in \cat{P}\). In particular, the assumption of the lemma implies the vanishing condition
		\begin{align}\label{eq-vanishing}
			\Hom{\cat{D}^{[-d-1,0]}}(\Sigma^k\Psi(P),\Psi(P')) = 0
		\end{align}
		for all \(P,P' \in \cat{P}\) and \(1 \leq k \leq d\). Our goal is thus to prove that for \(M \in \cat{E}\) with \(\id{M} <d+1\), we have
		\begin{align*}
			\Hom{\cat{D}^{[-d-1,0]}}(\Psi(M),\Psi(P')) \subseteq 
			\begin{cases}
				0 & \text{if}\quad \pd{M} < d+1, \\
				[\Psi(\cat{I})](\Psi(M),\Psi(P')) & \text{if}\quad \pd{M} = d+1,
			\end{cases}
		\end{align*}
		for any projective object \(P'\) in \(\cat{E}\). If \(d = 0\) the statement is trivial, so we assume \(d>0\). \par
		We prove the claim by induction on the projective dimension of \(M\). If \(\pd{M} = 0\), then either \(M\) is projective-injective and the statement holds trivially, or \(\id{M} = d+1\), which means \(M\) does not satisfy the condition of the lemma. \par
		If \(\pd{M} = 1\), the condition \(\id{M} < d+1\) forces \(\Psi(M) \simeq \Sigma P\) for some \(P \in \cat{P}\), and
		\begin{align*}
			\Hom{\cat{D}^{[-d-1,0]}}(\Psi(M),\Psi(P')) = 0
		\end{align*}
		for any \(P' \in \cat{P}\) follows from \eqref{eq-vanishing}. \par
		Now set \(l = \pd{M}\), and suppose that \(l > 1\). Then the complex \(\Psi(M)\) is isomorphic in \(\cat{D}^{[-d-1,0]}\) to a complex \(\tilde{P}^\bullet\) with \(\tilde{P}^k = 0\) for \(k < -l\), and moreover \(\tilde{P}^{-l} \neq 0\). The complex \(\tilde{P}^\bullet\) fits in a conflation
		\begin{align*}
			X^\bullet \inflation \tilde{P}^\bullet \deflation \Sigma^{l}\tilde{P}^{-l},
		\end{align*}
		where \(X^\bullet = \tau_{-l+1}\tilde{P}^\bullet\). It is not hard to verify that \(\id{X^\bullet} \leq \id{\tilde{P}^\bullet}\) and \(\pd{X^\bullet} < \pd{\tilde{P}^\bullet}\). Applying the functor \(\Hom{\cat{D}^{[-d-1,0]}}(-,\Psi(P'))\) to this conflation and using the isomorphism \(\Psi(M) \simeq \tilde{P}^\bullet\), we obtain the exact sequence
		\begin{align*}
			\Hom{\cat{D}^{[-d-1,0]}}(\Sigma^{l}\tilde{P}^{-l},\Psi(P'))  \rightarrow \Hom{\cat{D}^{[-d-1,0]}}(\Psi(M),\Psi(P')) \rightarrow \Hom{\cat{D}^{[-d-1,0]}}(X^\bullet,\Psi(P')).
		\end{align*}
		By induction hypothesis, \(\Hom{\cat{D}^{[-d-1,0]}}(X^\bullet,\Psi(P')) = 0\), so that
		\begin{align*}
			\Hom{\cat{D}^{[-d-1,0]}}(\Sigma^{l}\tilde{P}^{-l},\Psi(P'))  \deflation \Hom{\cat{D}^{[-d-1,0]}}(\Psi(M),\Psi(P')).
		\end{align*}
		If \(l < d+1\), we have \(\Hom{\cat{D}^{[-d-1,0]}}(\Sigma^{l}\tilde{P}^{-l},\Psi(P')) = 0\) by \eqref{eq-vanishing}, and so
		\begin{align*}
			\Hom{\cat{D}^{[-d-1,0]}}(\Psi(M),\Psi(P')) = 0.
		\end{align*}  
		Otherwise, \(\Sigma^{l}\tilde{P}^{-l}\) is injective, so
		\begin{align*}
			\Hom{\cat{D}^{[-d-1,0]}}(\Psi(M),\Psi(P')) \subseteq [\cat{I}](\Psi(M),\Psi(P')).
		\end{align*}
		This concludes the proof.
	\end{proof}
	
	\begin{lemma}\label{lemma-only-zero}
		Let \(P^\bullet\) and \(\tilde{P}^\bullet\) be acyclic complexes in \(\hocat{b}(\cat{P})\). Assume, moreover, that \(P^0 \neq 0\) and \(P^k = 0\) for \(k>0\). Let \(f^\bullet : P^\bullet \rightarrow \tilde{P}^\bullet\) be a morphism such that \(f^i \in [\cat{Q}]\) for all \(i\). Then \(f^\bullet\) is homotopic to a morphism \(\tilde{f}^\bullet\) satisfying \(\tilde{f}^i = 0\) for all \(i \neq 0\) and \(\tilde{f}^0 \in [\cat{Q}]\).
	\end{lemma}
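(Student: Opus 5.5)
The plan is to push the support of \(f^\bullet\) upward, degree by degree, starting at the bottom of \(P^\bullet\). This mirrors the inductive homotopy argument of \Cref{lemma-truncation}, with the injectivity of projective-injective objects doing the work. I read ``acyclic'' as exact in every degree below \(0\), so that \(P^\bullet\) is an augmented projective resolution of its top cohomology. Concretely, there are conflations \(Z^i \inflation P^i \deflation Z^{i+1}\) in \(\cat{E}\) with \(d_P^i\) equal to the composite \(P^i \deflation Z^{i+1} \inflation P^{i+1}\) for \(i<0\), and \(Z^k = P^k\) in the lowest nonzero degree \(k\). If instead one means fully exact, the complex is split exact by a top-down splitting argument, hence contractible, and the statement is trivial with \(\tilde{f}^\bullet=0\). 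A similar conflation description holds for \(\tilde{P}^\bullet\) where needed.

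The induction statement I would prove is the following. For each \(i\le 0\), \(f^\bullet\) is homotopic to a morphism \(g^\bullet\) with \(g^j = 0\) for \(j<i\) and \(g^j \in [\cat{Q}]\) for all \(j\). This holds trivially for \(i=k\), since \(f^\bullet\) itself qualifies. The case \(i=0\) is the claim, because \(g^j=0\) automatically for \(j>0\) as \(P^j=0\) there.

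For the inductive step, suppose \(i<0\) and \(g^j=0\) for \(j<i\). Then \(g^i\circ d_P^{i-1} = d_{\tilde{P}}^{i-1}\circ g^{i-1} = 0\), so \(g^i\) factors through the deflation \(P^i \deflation Z^{i+1}\), say \(g^i = u\circ p\) with \(u : Z^{i+1}\rightarrow \tilde{P}^i\). I then want a morphism \(h : P^{i+1}\rightarrow \tilde{P}^i\) in \([\cat{Q}]\) with \(h\circ d_P^i = g^i\). Replacing \(g^\bullet\) by \(g^\bullet - (h\circ d_P^i + d_{\tilde{P}}^i\circ h)\), with the homotopy concentrated in the single spot \(P^{i+1}\rightarrow\tilde{P}^i\), kills degree \(i\), leaves degrees below \(i\) untouched, and changes degree \(i+1\) by \(d_{\tilde{P}}^i\circ h\). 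Since \([\cat{Q}]\) is an ideal, the new component \(g^{i+1} - d_{\tilde{P}}^i h\) stays in \([\cat{Q}]\), and the induction proceeds.

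The main obstacle is producing \(h\) inside \([\cat{Q}]\), i.e.\ extending \(u\) along the inflation \(Z^{i+1}\inflation P^{i+1}\) through a projective-injective object. In the bottom degree \(i=k\) this is easy: \(d_P^k\) is itself an inflation, and if \(g^k = b\circ a\) with \(a : P^k \rightarrow Q\) and \(Q\in\cat{Q}\), injectivity of \(Q\) extends \(a\) to \(a' : P^{i+1}\rightarrow Q\), and \(h = b\circ a'\) works. In the general step my first attempt would be to show that \(u\) itself lies in \([\cat{Q}]\); extending its first factor through the injective \(Q\) then gives \(h\) as before.

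To show \(u \in [\cat{Q}]\), I would first use the idea of \Cref{corollary-pi-envelope}: since \(P^i\) is projective, I may choose an inflation \(e : P^i \inflation Q_i\) with \(Q_i\in\cat{Q}\) through which every morphism \(P^i\rightarrow Q\) factors. So I can write \(g^i = b\circ e\). Then \(b\circ e\circ d_P^{i-1}=0\). I would push out the conflation \(Z^i\inflation P^i\deflation Z^{i+1}\) along \(e\) restricted to \(Z^i\), to factor \(b\) through the cokernel of \(Z^i\rightarrow Q_i\). That cokernel is again of the form treated by \Cref{lemma-idd}, hence has an inflation into a projective-injective object. This would exhibit \(u\) as factoring through \(\cat{Q}\). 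If this route fails, my fallback is to work in the reduced category \(\cat{E}/[\cat{Q}]\), where all \(g^j\) vanish. There I would argue that the vanishing of \(g^\bullet\) can be lifted degreewise, which is the same extension problem, now phrased via \Cref{lemma-common-injective}.
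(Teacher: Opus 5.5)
Your architecture matches the paper's. You kill the lowest nonzero component by a homotopy concentrated in one spot, write \(g^i=b\circ e\) with \(e:P^i\inflation Q_i\), and factor \(b\) through \(E=\mathrm{coker}(Z^i\inflation Q_i)\), so that \(u=j\circ l\) with \(j:E\to\tilde{P}^i\). Then you extend along \(Z^{i+1}\inflation P^{i+1}\) by injectivity.

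The gap is the step ``\(E\) has an inflation into a projective-injective object, hence \(u\) factors through \(\cat{Q}\)''. This does not follow. The map \(j\) has a projective, non-injective target, so nothing lets it extend along an inflation \(E\inflation Q'\). What you need is that every morphism from \(E\) to a projective object lies in \([\cat{Q}]\). That is exactly \Cref{lemma-hom-to-projective}. It applies because \(\id{E}\le d\), since \(Q_i\) is injective and the global dimension is at most \(d+1\). It also needs \(\pd{E}\le\pd{Z^i}+1\le d\), which holds for a resolution concentrated in degrees \([-d-1,0]\). That lemma rests on the hypothesis \(\Extg{\cat{E}}^k(I,P)=0\) for \(1\le k\le d\). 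The statement leaves this hypothesis implicit, but it is in force where the present lemma is used (\Cref{proposition-d-Aus}). Your argument never invokes it, and your fallback through \(\cat{E}/[\cat{Q}]\) does not supply it either.

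Without that hypothesis your key claim is false. Take \(d=1\) and \(\Gamma\) the Auslander algebra of \(k[x]/(x^2)\). Its indecomposable projectives are \(P_1\) (top \(S_1\), socle \(S_2\)) and \(P_2\) (composition factors \(S_2,S_1,S_2\)), so \(\cat{Q}=\add P_2\). Let \(\alpha:P_1\to P_2\) be the inclusion of \(\mathrm{rad}\,P_2\), and let \(\beta:P_2\to P_1\) be the map with image \(\mathrm{soc}\,P_1\). Take \(P^\bullet=(P_1\xrightarrow{\alpha}P_2\xrightarrow{\beta}P_1)\) in degrees \(-2,-1,0\), \(\tilde{P}^\bullet=(P_1\xrightarrow{\alpha}P_2)\) in degrees \(-1,0\), and \(g^\bullet=(0,\beta,\alpha)\). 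All components of \(g^\bullet\) lie in \([\cat{Q}]\).

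At \(i=-1\) you get \(E\cong S_2\), which does embed in \(P_2\). But \(u\) is the socle inclusion \(S_2\inflation P_1\), which is not in \([\cat{Q}]\). Indeed, maps \(S_2\to P_2\) land in \(\mathrm{rad}\,P_2=\ker\beta\), and \(\beta\) spans \(\Hom{\Gamma}(P_2,P_1)\). Likewise, since \(\mathrm{End}_\Gamma(P_1)=k\), the only \(h\) with \(h\circ d_P^{-1}=g^{-1}\) is \(\mathrm{id}_{P_1}\notin[\cat{Q}]\). So your proposed homotopy does not exist here. This \(g^\bullet\) happens to be null-homotopic, but only via a homotopy outside \([\cat{Q}]\). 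Consistently, \(\Extg{\Gamma}^1(D\Gamma,\Gamma)\neq0\) for this algebra.
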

	
	\begin{proof}
		Let \(n \in \mathbb{Z}\) be the minimal integer such that \(f^n \neq 0\). We will show that if \(n < 0\), then \(f^\bullet\) is homotopic to a morphism \(\bar{f}^\bullet\) such that \(\bar{f}^k \in [\cat{Q}]\) for all \(k \in \mathbb{Z}\) and \(\bar{f}^{k} = 0\) for all \(k \leq n\). The claim of the proof then follows by recursion. \par
		Let us denote by \(C^{k-1}\xinflation{a^k} P^k \xdeflation{b^k} C^k\) and \(\tilde{C}^{k-1}\xinflation{\tilde{a}^k} \tilde{P}^k \xdeflation{\tilde{b}^k} \tilde{C}^k\) the conflations that make up the acyclic complexes \(P^\bullet\) and \(\tilde{P}^\bullet\), respectively. Moreover, let \(g^k : P^k \rightarrow Q^k\) and \(h^k : Q^k \rightarrow \tilde{P}^k\) be such that \(f^k = h^k \circ g^k\) and \(Q^k\) is projective-injective. We can assume \(g^{k}\) are inflations. Then observing \(f^\bullet\) around \(n\), we have the following commutative diagram 
		\begin{center}
		\begin{tikzcd}
			\cdots & {P^{n-1}} & {C^{n-1}} & {P^n} & {C^n} & {P^{n+1}} & \cdots \\
			&&& {Q^n} && {Q^{n+1}} \\
			\cdots & {\tilde{P}^{n-1}} & {\tilde{C}^{n-1}} & {\tilde{P}^n} & {\tilde{C}^n} & {\tilde{P}^{n+1}} & \cdots
			\arrow[from=1-1, to=1-2]
			\arrow[two heads, from=1-2, to=1-3]
			\arrow["0", from=1-2, to=3-2]
			\arrow["{a^n}", tail, from=1-3, to=1-4]
			\arrow["{b^n}", two heads, from=1-4, to=1-5]
			\arrow["{g^n}", tail, from=1-4, to=2-4]
			\arrow["{a^{n-1}}", tail, from=1-5, to=1-6]
			\arrow[from=1-6, to=1-7]
			\arrow["{g^{n+1}}", tail, from=1-6, to=2-6]
			\arrow["{h^n}", from=2-4, to=3-4]
			\arrow["{h^{n+1}}", from=2-6, to=3-6]
			\arrow[from=3-1, to=3-2]
			\arrow[two heads, from=3-2, to=3-3]
			\arrow["{\tilde{a}^n}"', tail, from=3-3, to=3-4]
			\arrow[two heads, from=3-4, to=3-5]
			\arrow[tail, from=3-5, to=3-6]
			\arrow[from=3-6, to=3-7]
		\end{tikzcd}.
		\end{center}
		Consequently, we obtain the following commutative diagram
		\begin{center}
		\begin{tikzcd}
			{C^{n-1}} & {P^n} & {C^n} \\
			{C^{n-1}} & {Q^n} & E \\
			{\tilde{C}^{n-1}} & {\tilde{P}^n}
			\arrow["{a^n}", tail, from=1-1, to=1-2]
			\arrow[equals, from=1-1, to=2-1]
			\arrow["{b^n}", two heads, from=1-2, to=1-3]
			\arrow["{g^n}", tail, from=1-2, to=2-2]
			\arrow["l", from=1-3, to=2-3]
			\arrow[tail, from=2-1, to=2-2]
			\arrow["0"', from=2-1, to=3-1]
			\arrow[two heads, from=2-2, to=2-3]
			\arrow["{h^n}", from=2-2, to=3-2]
			\arrow["j", from=2-3, to=3-2]
			\arrow["{\tilde{a}^n}"', tail, from=3-1, to=3-2]
		\end{tikzcd},
		\end{center}
		where \(E\) is the cokernel of the inflation \(g^n\circ a^n\). According to \Cref{lemma-hom-to-projective}, since \(\pd{E} < d+1\), the morphism \(j\) factors through a projective-injective object \(Q\). Namely, there are morphisms \(j_1 : E \rightarrow Q\) and \(j_2 : Q \rightarrow \tilde{P}^n\) such that \(j = j_2\circ j_1\). Finally, the composition \(j_1 \circ l\) factors through the inflation \(a^{n+1}\), so that we obtain the following commutative diagram
		\begin{center}
		\begin{tikzcd}
			{P^n} & {C^n} && {P^{n+1}} \\
			{Q^n} & E \\
			& Q \\
			{\tilde{P}^n}
			\arrow["{b^n}", two heads, from=1-1, to=1-2]
			\arrow["{g^n}", tail, from=1-1, to=2-1]
			\arrow["{a^{n+1}}", tail, from=1-2, to=1-4]
			\arrow["l", from=1-2, to=2-2]
			\arrow["\varphi", from=1-4, to=3-2]
			\arrow[two heads, from=2-1, to=2-2]
			\arrow["{h^n}", from=2-1, to=4-1]
			\arrow["{j_1}", from=2-2, to=3-2]
			\arrow["{j_2}", from=3-2, to=4-1]
		\end{tikzcd}.
		\end{center}
	Finally, we define the morphism \(\bar{f}^\bullet\) by 
	\begin{align*}
		\bar{f}^k = 
		\begin{cases}
			0 & \text{if}\quad k = n, \\
			f^k - \tilde{a}^{n+1}\circ \tilde{b}^n \circ j_2 \circ \phi & \text{if}\quad k = n+1, \\
			f^k & \text{otherwise}.
		\end{cases}
	\end{align*}
	It is immediate to verify that \(f^\bullet\) is homotopic to \(\bar{f}^\bullet\).
	\end{proof}
	
	The additive quotient \(\cat{P} \rightarrow \cat{P}/[\cat{Q}]\) induces a canonical triangulated functor \(\hocat{-}(\cat{P})\rightarrow\hocat{-}(\cat{P}/[\cat{Q}])\) which trivially vanishes on \(\hocat{b}(\cat{Q})\). We thus obtain a canonical triangulated functor 
	\begin{align*}
		\rho : \hocat{-}(\cat{P})/\hocat{b}(\cat{Q}) \rightarrow \hocat{-}(\cat{P}/[\cat{Q}]).
	\end{align*}
	We further notice that the image of \(\cat{D}^{[-d-1,0]}\) under \(\rho\) lies in \(\hocat{[-d-1,0]}(\cat{P}/[\cat{Q}])\). By abuse of notation we will let \(\rho\) be the induced functor
	\begin{align*}
		\rho : \cat{D}^{[-d-1,0]} \rightarrow \hocat{[-d-1,0]}(\cat{P}/[\cat{Q}]).
	\end{align*}
	
	\begin{proposition}\label{proposition-d-Aus}
		Suppose \(\Extg{\cat{E}}^k(I,P) = 0\) for any \(1\leq k \leq d\), \(I \in \cat{I}\) and \(P \in \cat{P}\). Then the functor 
		\begin{align*}
			\rho\circ \Phi : \cat{E} \rightarrow \hocat{[-d-1,0]}(\cat{P}/[\cat{Q}])
		\end{align*}
		is full and essentially surjective. Its kernel is the ideal \([\cat{I}\rightarrow \cat{P}]\) of morphisms factoring through a morphism with an injective domain and a projective codomain. 
	\end{proposition}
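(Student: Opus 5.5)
We factor \(\rho\circ\Phi\) as \(\cat{E} \xrightarrow{\Phi} \cat{D}^{[-d-1,0]} \xrightarrow{\rho} \hocat{[-d-1,0]}(\cat{P}/[\cat{Q}])\) and treat the two factors separately. By \Cref{corollary-reduced-d-Auslander}, \(\Phi\) is full and essentially surjective onto \(\cat{D}^{[-d-1,0]}\), with kernel \([\cat{Q}]\) by \Cref{lemma-kernel}. So the task reduces to three properties of \(\rho\) on \(\cat{D}^{[-d-1,0]}\).

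\emph{Essential surjectivity.} Every object of \(\hocat{[-d-1,0]}(\cat{P}/[\cat{Q}])\) is a complex of objects of \(\cat{P}\). Its differentials are classes of morphisms in \(\cat{P}\) whose consecutive composites lie in \([\cat{Q}]\). We lift the differentials inductively from the left, exactly as in the proof of the main theorem of \Cref{d-Auslander}. Whenever a composite \(f^{-k}\circ f^{-k-1}\) factors through some \(Q\in\cat{Q}\), we correct it by adding \(Q\)-summands in intermediate degrees. Adding such summands does not change the image in \(\cat{P}/[\cat{Q}]\).

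Alternatively, we can first lift to a complex in \(\hocat{[-d-1,0]}(\cat{P})\) up to terms in \(\cat{Q}\), using that \(Q\) is injective, so that a factorization through \(Q\) can be pushed along inflations. The lifted complex lies in \(\cat{D}^{[-d-1,0]}\) by definition, and \(\rho\) sends it to the given complex.

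\emph{Fullness.} Represent objects of \(\cat{D}^{[-d-1,0]}\) by complexes \(P^\bullet,\tilde P^\bullet\) that are images of objects \(M,N\) of \(\cat{E}\), i.e. complexes "exact" in the sense of the construction (built from conflations). A chain map in \(\cat{P}/[\cat{Q}]\) lifts componentwise to maps \(f^i\) in \(\cat{P}\). These fail to commute with the differentials only up to \([\cat{Q}]\). We correct degree by degree using injectivity of \(\cat{Q}\) against the inflations \(a^k\) of the complex \(P^\bullet\), in the style of \Cref{lemma-truncation}. This produces a genuine chain map in \(\hocat{-}(\cat{P})\), and its image in \(\cat{D}\) maps onto the given morphism under \(\rho\).

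\emph{Kernel: \([\cat{I}\rightarrow\cat{P}]\subseteq\ker(\rho\circ\Phi)\).} An injective \(I\) satisfies \(\Psi(I)\simeq \Sigma^{d+1}\Psi(P)\), concentrated in degree \(-d-1\). A projective \(P'\) is concentrated in degree \(0\). Since \(d+1\ge 1\), every chain map between these in \(\hocat{[-d-1,0]}(\cat{P}/[\cat{Q}])\) vanishes.

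\emph{Kernel: \(\ker(\rho\circ\Phi)\subseteq[\cat{I}\rightarrow\cat{P}]\).} This is the main obstacle. Let \(f:M\to N\) with \(\rho\Phi(f)=0\). Represent \(\Phi(f)\) by a chain map \(f^\bullet\) between the projective resolutions. Then \(f^\bullet\) is null-homotopic modulo \([\cat{Q}]\): after subtracting a lifted homotopy, each \(f^i\) lies in \([\cat{Q}]\).

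We then apply \Cref{lemma-only-zero} to the acyclic complexes obtained by augmenting the resolutions with \(M\) and \(N\); it requires the vanishing hypothesis through \Cref{lemma-hom-to-projective}. This makes \(f^\bullet\) homotopic to a map concentrated in one degree that factors through \(\cat{Q}\). The delicate point is the last degree \(-d-1\), where \(\pd{}\) can equal \(d+1\). There \Cref{lemma-hom-to-projective} only gives factorization through \([\cat{I}]\) rather than \([\cat{Q}]\). This is exactly the source of the factorization \(M\to I\to P\to N\).

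We will track where the remaining component sits: it is the map from the degree \(-d-1\) part (injective, in \(\cat{D}\)) to a projective term. Conclude that \(f\) factors through some \(I\to P\) modulo \([\cat{Q}]\subseteq[\cat{I}\rightarrow\cat{P}]\). This step needs careful bookkeeping of the homotopy and is where the hypothesis \(\Extg{\cat{E}}^k(I,P)=0\) is essential.
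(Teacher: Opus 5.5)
You have the paper's architecture: factor through \(\Phi\) and \(\rho\), and obtain \([\cat I\to\cat P]\subseteq\ker(\rho\circ\Phi)\) because there is no nonzero chain map from a complex in degree \(-d-1\) to one in degree \(0\). For the converse, you subtract a lifted homotopy so that all \(f^i\in[\cat Q]\) and invoke \Cref{lemma-only-zero}. The gap is the step that actually produces a factorization through \(\cat I\to\cat P\). You defer it to ``careful bookkeeping'' and locate it at the wrong end of the resolution.

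\Cref{lemma-only-zero} is applied to the projective resolutions themselves. Your augmented complexes are no longer in \(\hocat{b}(\cat P)\), and their augmented component \(f\) is not known to lie in \([\cat Q]\), which the lemma requires. Inside that lemma every correction goes through \(\cat Q\), because the auxiliary cokernels have projective dimension \(<d+1\); nothing special happens at degree \(-d-1\). Its output is a chain map concentrated in degree \(0\), with \(f^0=h\circ g\), \(g:P^0\inflation Q\), \(h:Q\to\tilde P^0\), \(Q\in\cat Q\). This alone does not put \(f\) in any ideal: the factorization through \(Q\) does not descend along \(P^0\deflation M\), since the inflation \(g\) does not vanish on the syzygy.

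The missing step, which is the heart of the paper's argument, goes as follows. Let \(K\inflation P^0\deflation M\) be the first conflation of the resolution, and let \(E\) be the cokernel of the inflation \(K\inflation P^0\xinflation{g}Q\). Since \(f^{-1}=0\), the chain-map condition gives \(h\circ g\circ(K\inflation P^0)=0\). Hence \(h=s\circ(Q\deflation E)\) for some \(s:E\to\tilde P^0\), and \(f=(\tilde P^0\deflation N)\circ s\circ\lambda\), where \(\lambda:M\to E\) is the induced map. As \(E\) is the cokernel of an inflation of \(K\) into an injective, \(\id{E}<d+1\). So \Cref{lemma-hom-to-projective} gives \(s\in[\cat Q]\) if \(\pd{E}<d+1\), and \(s\in[\cat I]\) if \(\pd{E}=d+1\); this is where the hypothesis \(\Extg{\cat E}^k(I,P)=0\) enters, besides inside \Cref{lemma-only-zero}. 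Either way \(s\) factors through some \(I\in\cat I\), so \(f\in[\cat I\to\cat P]\).

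Your remark about degree \(-d-1\) is a shadow of this. Since \(E\cong\Sigma K\) in \(\cat D\), when \(\pd{E}=d+1\) the injective produced in the proof of \Cref{lemma-hom-to-projective} can be taken to be \(\Sigma^{d+1}P^{-d-1}\). But the mechanism is this degree-\(0\) cokernel, not a leftover chain component.

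A lesser issue, since the paper simply asserts these two properties as clear: your sketches of essential surjectivity and fullness of \(\rho\) do not close up as written. Adjoining a \(Q\)-summand to kill \(f^{-k}\circ f^{-k-1}\) creates a new component whose composite with the preceding differential need not vanish. Repairing this needs more than injectivity of \(Q\): one must control maps from cokernels to projectives, as in \Cref{lemma-hom-to-projective}.
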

	
	\begin{proof}
		The functor \(\rho\) is clearly full and essentially surjective. The functor \(\Phi\) is an ideal quotient, so it is full and essentially surjective as well. The composition \(\rho\circ \Phi\) is thus full and essentially surjective. \par
		Let \(f : I \rightarrow P\) be a morphism in \(\cat{E}\) with \(I \in \cat{I}\) and \(P \in \cat{P}\). Then \(\rho\circ{\Phi(f)}\) is a morphism from \(\Sigma^{d+1} \tilde{P}\) to \(P\) in \(\hocat{[-d-1,0]}(\cat{P}/[\cat{Q}])\) for some \(\tilde{P} \in \cat{P}\). Then clearly \(\rho\circ{\Phi(f)} = 0\), and so \([\cat{I}\rightarrow \cat{P}] \subseteq \ker(\rho\circ\Phi)\). \par
		Conversely, let \(f : M \rightarrow N\) be a morphism in \(\cat{E}\) such that \(\rho\circ\Phi(f) = 0\). Let \(P^\bullet\) and \(\tilde{P}^\bullet\) be projective resolutions of \(M\) and \(N\) in \(\cat{E}\), respectively. Then \(f\) induces a morphism \(f^\bullet : P^\bullet \rightarrow \tilde{P}^\bullet\) in \(\hocat{[-d-1,0]}(\cat{P})\) such that \(f^k \in [\cat{Q}]\) for all \(k \in \mathbb{Z}\). According to \Cref{lemma-only-zero}, we can assume \(f^k = 0\) for all \(k \neq 0\). Let \(g : P^0 \inflation Q\) and \(h : Q \rightarrow \tilde{P}^0\) be a factorization of \(f^0\) through a projective-injective object \(Q\). Then the morphism \(f\) fits in the following solid commutative diagram whose rows are conflations
		\begin{center}
		\begin{tikzcd}
			K & {P^0} && M \\
			K & Q & E \\
			{\tilde{K}} & {\tilde{P}^0} && N
			\arrow[tail, from=1-1, to=1-2]
			\arrow[equals, from=1-1, to=2-1]
			\arrow[two heads, from=1-2, to=1-4]
			\arrow["g"', tail, from=1-2, to=2-2]
			\arrow[from=1-4, to=2-3]
			\arrow["f", from=1-4, to=3-4]
			\arrow[tail, from=2-1, to=2-2]
			\arrow["0"', from=2-1, to=3-1]
			\arrow[two heads, from=2-2, to=2-3]
			\arrow["h"', from=2-2, to=3-2]
			\arrow["s", dashed, from=2-3, to=3-2]
			\arrow[from=2-3, to=3-4]
			\arrow[tail, from=3-1, to=3-2]
			\arrow[two heads, from=3-2, to=3-4]
			\end{tikzcd}.
		\end{center}
		As \(\id{E} < d + 1\), \Cref{lemma-only-zero} implies the existence of the dashed morphism \(s \in [\cat{I}](E,\tilde{P}^0)\), which keeps the diagram commutative. Therefore \(f\) factors through a morphism with an injective domain and a projective codomain, as we wanted to show.
	\end{proof}
	
	\begin{theorem}\label{theorem-ideal-quotients}
		Let \(\cat{C}\) be an algebraic extriangulated category. Denote by \(\cat{P}\), \(\cat{I}\) and \(\cat{Q}\) its subcategories of projective, injective, and projective-injective objects, respectively. The following statements are equivalent:
		\begin{enumerate}[label = (\arabic*)]
			\item\label{d-Aus-1} \(\cat{C}\) admits an extriangulated ideal quotient which is equivalent as an extriangulated category to \(\hocat{[-d-1,0]}(\cat{A})\) for some additive category \(\cat{A}\).
			\item\label{d-Aus-2} \(\cat{C}\) is \(d\)-Auslander and satisfies \(\Ext{\cat{C}}^k(I,P) = 0\) for any \(1\leq k \leq d\), \(I \in \cat{I}\) and \(P \in \cat{P}\).
		\end{enumerate}
		When these equivalent conditions hold, we have \(\cat{A} \simeq \cat{P}/[\cat{Q}]\) and \(\cat{C}/[\cat{I}\rightarrow\cat{P}] \simeq \hocat{[-d-1,0]}(\cat{A})\).
	\end{theorem}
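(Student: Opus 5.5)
The proof splits into the two implications; in both directions I first reduce to an exact model. Since \(\cat{C}\) is algebraic, write \(\cat{C} \simeq \cat{E}/[\cat{Q}_0]\) with \(\cat{E}\) exact and \(\cat{Q}_0\) projective-injective.

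\emph{Transfer of the conditions.} I would check that \(\cat{E}\) is \(d\)-Auslander exactly when \(\cat{C}\) is, and that the vanishing of \(\Ext{}^k(I,P)\) holds in one exactly when it holds in the other. This should be routine: conflations, projectives and injectives correspond, and \(\Ext{}^k\) for \(k\geq 1\) is unchanged because \(\cat{Q}_0\) is projective-injective. Likewise \([\cat{I}\rightarrow\cat{P}]\) in \(\cat{C}\) is the image of the corresponding ideal in \(\cat{E}\), since \(\cat{Q}_0 \subseteq \cat{I}\cap\cat{P}\).

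\emph{Proof of \ref{d-Aus-2}\(\Rightarrow\)\ref{d-Aus-1}.} I apply \Cref{proposition-d-Aus} to \(\cat{E}\). It gives a full, essentially surjective functor \(\rho\circ\Phi:\cat{E}\rightarrow \hocat{[-d-1,0]}(\cat{P}/[\cat{Q}])\) whose kernel is \([\cat{I}\rightarrow\cat{P}]\). This kernel contains \([\cat{Q}_0]\), so the functor factors through \(\cat{C}\), and we obtain an additive equivalence \(\cat{C}/[\cat{I}\rightarrow \cat{P}]\simeq \hocat{[-d-1,0]}(\cat{P}/[\cat{Q}])\). The quotient \(\cat{C}/[\cat{I}\rightarrow\cat{P}]\) carries the extriangulated structure from FGPPP (Theorem 2.8), and the quotient map is an extriangulated ideal quotient. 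It remains to show that the equivalence respects the extriangulated structures, i.e. matches \(\Ext{\cat{C}}\) with \(\Hom{}(X,\Sigma Y)\) in the truncated homotopy category and matches the realizations. I would argue through \(\Psi\) and \(\rho\): \(\Psi\) identifies \(\Ext{\cat{C}}(M,N)\) with \(\Hom{\cat{D}}(\Psi M,\Sigma\Psi N)\) by \Cref{proposition-embedding}, and \(\rho\) is triangulated, so it sends triangles to triangles. The point to verify is that \(\rho\) induces an isomorphism on these \(\Hom{}(-,\Sigma-)\) groups between objects of \(\cat{D}^{[-d-1,0]}\). I would prove this by dévissage along brutal truncations, reducing to the stalk case \(\Hom{}(\Sigma^a P,\Sigma^{b}P')\). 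There the vanishing hypothesis, in the form \eqref{eq-vanishing}, kills exactly the discrepancy created by morphisms through \(\cat{Q}\). Finally \(\cat{A}=\cat{P}/[\cat{Q}]\), which is also \(\cat{P}_{\cat{C}}/[\cat{Q}_{\cat{C}}]\).

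\emph{Proof of \ref{d-Aus-1}\(\Rightarrow\)\ref{d-Aus-2}.} Let \(F:\cat{C}\rightarrow\hocat{[-d-1,0]}(\cat{A})\) be an extriangulated ideal quotient. Since \(\alpha\) is the identity, \(\Ext{}\), and hence all \(\Ext{}^k\) computed from conflations, is preserved. First I compute the structure of \(\hocat{[-d-1,0]}(\cat{A})\) directly:
\begin{itemize}
\item the projectives are \(\add\cat{A}\) in degree \(0\);
\item the injectives are \(\Sigma^{d+1}\cat{A}\);
\item it is \(d\)-Auslander, via stupid truncations and the cones \(\Sigma^{k}A\rightarrow\Sigma^{k+1}A\);
\item \(\Ext{}^k(\Sigma^{d+1}A,A')=\Hom{}(\Sigma^{d+1}A,\Sigma^kA')=0\) for \(1\le k\le d\).
\end{itemize}
Then I lift along \(F\). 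Because \(F\) preserves \(\Ext{}\) and is full and essentially surjective, an object \(X\) of \(\cat{C}\) is projective (resp. injective) iff \(F(X)\) is. Indeed, projectivity is the vanishing of \(\Ext{}(X,-)\), and every object of the target is an image. Conflations lift, because \(\Ext{\cat{C}}(C,A)=\Ext{}(FC,FA)\) and realizations are preserved. Hence global dimension, dominant dimension and the \(\Ext{}^k\) vanishing transfer to \(\cat{C}\). Higher \(\Ext{}^k\) is defined by Yoneda composition of \(\Ext{}\)'s via the syzygy conflations, which lift.

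The main obstacle I expect is the step in the forward direction showing that the additive equivalence is extriangulated. Concretely, I have to prove that \(\rho\) is bijective on the relevant \(\Hom{}(-,\Sigma-)\) groups; the kernel computation of \Cref{proposition-d-Aus} only handles degree-zero morphisms. I would try the five-lemma approach via truncation triangles, using \Cref{lemma-hom-to-projective} to control the maps into projectives.
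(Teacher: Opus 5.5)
Your architecture matches the paper's. You pass to an exact model \(\cat{E}\), obtain (2)\(\Rightarrow\)(1) from \Cref{proposition-d-Aus}, and obtain (1)\(\Rightarrow\)(2) by transporting projectives, injectives, conflations and higher extensions along the ideal quotient. Your version of this last direction is more detailed than the paper's.

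\textbf{The gap is the final clause.} There \(\cat{A}\) is the category appearing in (1), and the claim is that \emph{every} such \(\cat{A}\) is equivalent to \(\cat{P}/[\cat{Q}]\). Your sentence ``Finally \(\cat{A}=\cat{P}/[\cat{Q}]\)'' only records the choice made in your own construction. Nothing in your proposal rules out a different \(\cat{A}\) with \(\hocat{[-d-1,0]}(\cat{A})\) an ideal quotient of \(\cat{C}\). The paper closes this as follows.
\begin{itemize}
\item Let \(\pi:\cat{C}\rightarrow\hocat{[-d-1,0]}(\cat{A})\) be an extriangulated ideal quotient. Then \(\pi\) sends injectives into \(\Sigma^{d+1}\cat{A}\) and projectives into \(\cat{A}\), as you showed.
\item Since \(\mathrm{Hom}(\Sigma^{d+1}A,A')=0\), \(\pi\) kills \([\cat{I}\rightarrow\cat{P}]\). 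It therefore factors through an extriangulated ideal quotient \(\tilde\pi:\hocat{[-d-1,0]}(\cat{P}/[\cat{Q}])\simeq\cat{C}/[\cat{I}\rightarrow\cat{P}]\rightarrow\hocat{[-d-1,0]}(\cat{A})\).
\item \(\tilde\pi\) is bijective on \(\mathbb{E}\), and \(\mathbb{E}(\Sigma P,P')\cong\mathrm{Hom}(P,P')\) in both truncated homotopy categories. Hence \(\tilde\pi\) is fully faithful on degree-zero stalks.
\item Together with the correspondence of projectives, this gives \(\cat{P}/[\cat{Q}]\simeq\cat{A}\). Then \(\cat{C}/[\cat{I}\rightarrow\cat{P}]\simeq\hocat{[-d-1,0]}(\cat{A})\) follows from your forward direction.
\end{itemize}

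\textbf{On the step you call the main obstacle.} The paper simply declares (2)\(\Rightarrow\)(1) immediate from \Cref{proposition-d-Aus}, so you are right that compatibility with \(\mathbb{E}\) needs an argument. However, your d\'evissage is heavier than necessary and, as phrased, does not close. Since \(\rho\) is not faithful in degree \(0\) (its kernel there is \([\cat{I}\rightarrow\cat{P}]\)), the five lemma is only available in its four-lemma form. That form needs fullness on \(\mathrm{Hom}(-,-)\), but also injectivity on \(\mathrm{Hom}(-,\Sigma^2-)\). So you would have to induct on all \(\mathrm{Hom}(-,\Sigma^k-)\), \(k\geq1\), simultaneously. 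This does work: on stalks \(\Sigma^aP,\Sigma^bP'\) with \(0\leq a,b\leq d+1\) one has \(a-b-k\leq d\). Hence only \(\mathrm{Hom}_{\cat{D}}(P,\Sigma^mP')=0\) for \(m\geq1\) and \(\mathrm{Hom}_{\cat{D}}(\Sigma^mP,P')=0\) for \(1\leq m\leq d\) are needed.

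A direct argument avoids all of this.
\begin{itemize}
\item Let \(F:\cat{C}\rightarrow\hocat{[-d-1,0]}(\cat{P}/[\cat{Q}])\) be induced by \(\rho\circ\Phi\), with \(\alpha\) the isomorphism of \Cref{proposition-embedding} followed by \(\rho\).
\item Fix a conflation \(L\xinflation{\iota}P\deflation M\) with \(P\) projective, realizing \(\delta\in\mathbb{E}(M,L)\). Every element of \(\mathbb{E}(M,N)\) has the form \(g_*\delta\).
\item \(FP\) is projective, so both \(\mathrm{Hom}(P,N)\rightarrow\mathrm{Hom}(L,N)\rightarrow\mathbb{E}(M,N)\rightarrow0\) and its image under \(F\) are exact. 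Fullness of \(F\) then gives surjectivity of \(\alpha\).
\item For injectivity, suppose \(\alpha(g_*\delta)=F(g)_*\alpha(\delta)=0\). Then \(F(g)=F(h)F(\iota)\) for some \(h\), so \(g-h\iota\in[\cat{I}\rightarrow\cat{P}]\).
\item Such morphisms act by zero on \(\mathbb{E}\), because they factor through an injective. Hence \(g_*\delta=h_*\iota_*\delta=0\).
\end{itemize}
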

	
	\begin{proof}
		We let \(\cat{E}\) be an exact category with a subcategory \(\cat{Q}_0\) of projective-injective objects such that \(\cat{E}/[\cat{Q}_0] \simeq \cat{C}\). We let \(\tilde{\cat{P}}\), \(\tilde{\cat{I}}\) and \(\tilde{\cat{Q}}\) be its subcategories of projective, injective and projective-injective objects. It is clear that \(\cat{E}/[\tilde{\cat{I}}\rightarrow \tilde{\cat{P}}] \simeq \cat{C}/[\cat{I}\rightarrow\cat{P}]\) as extriangulated categories. The implication \((2) \implies (1)\) is then an immediate consequence of \Cref{proposition-d-Aus}. \par
		Suppose now that \(\cat{C}\) admits \(\hocat{[-d-1,0]}(\cat{A})\) as an extriangulated ideal quotient for some additive category \(\cat{A}\). Let \(\pi : \cat{C} \rightarrow \hocat{[-d-1,0]}(\cat{A})\) denote the quotient functor. The category \(\hocat{[-d-1,0]}(\cat{A})\) being \(d\)-Auslander forces \(\cat{C}\) to be \(d\)-Auslander. Moreover, we must have
		\begin{align*}
			\Ext{\cat{C}}^k(I,P) \simeq	\Extg{\hocat{[-d-1,0]}(\cat{A})}^k(\pi(I),\pi(P))
		\end{align*}
		for all \(1\leq k \leq d\), \(I \in \cat{I}\) and \(P \in \cat{P}\). But the latter vanishes trivially in \({\hocat{[-d-1,0]}(\cat{A})}\), so \((1) \implies (2)\). It is clear that \([\cat{I}\rightarrow\cat{P}] \subseteq \ker{\pi}\), which means that \(\pi\) induces the extriangulated ideal quotient
		\begin{align*}
			\tilde{\pi} : \cat{C}/[\cat{I}\rightarrow \cat{P}] \simeq \hocat{[-d-1,0]}(\cat{P}/[\cat{Q}]) \rightarrow {\hocat{[-d-1,0]}(\cat{A})}.
		\end{align*}
		But then for any \(P,P' \in \cat{P}\) we have
		\begin{align*}
			\Extg{\hocat{[-d-1,0]}(\cat{P}/[\cat{Q}])}^1(\Sigma P, P') \simeq \Extg{\hocat{[-d-1,0]}(\cat{A})}^1(\Sigma \tilde{\pi}(P), \tilde{\pi}P'),
		\end{align*}
		which is nothing other than
		\begin{align*}
			\Hom{\hocat{[-d-1,0]}(\cat{P}/[\cat{Q}])}(P, P') \simeq \Hom{\hocat{[-d-1,0]}(\cat{A})}(\tilde{\pi}(P), \tilde{\pi}(P')).
		\end{align*}
		In particular \(\tilde{\pi}\) induces an additive equivalence \(\cat{P}/[\cat{Q}]\simeq \cat{A}\), which in turn implies that \(\tilde{\pi}\) is an equivalence of extriangulated categories. 
	\end{proof}
	
	\begin{example}
		Let \(\Lambda\) be the path algebra of the linearly oriented quiver of type \(A_3\) over a field \(k\). Let \(M\) be a basic additive generator of \(\mathrm{mod}(\Lambda)\), and set \(\Gamma = \mathrm{End}_{\Lambda}(M)\). From the classical Auslander correspondence (see \cite{Auslander, Iyama07}), it follows that \(\mathrm{mod}(\Gamma)\) is \(1\)-Auslander. It is easy to verify that \(\Extg{\Gamma}^1(D\Gamma,\Gamma) = 0\). We can thus apply \Cref{theorem-ideal-quotients} to obtain the extriangulated equivalence 
		\begin{align*}
			\mathrm{mod}(\Gamma)/[D\Gamma \rightarrow \Gamma] \simeq \hocat{[-2,0]}(\cat{A}),
		\end{align*}
		where \(\cat{A} = \mathrm{mod}(\Lambda)/[\mathrm{inj}(\mathrm{mod}(\Lambda))]\).
		
		\begin{figure}[ht]
			\begin{center}
				\def\svgwidth{0.96\textwidth}
				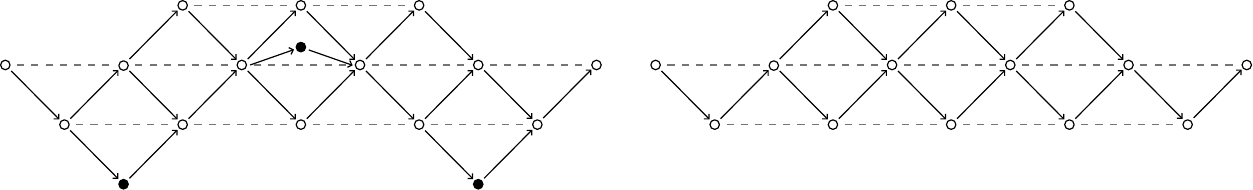
			\end{center}
			\caption{On the left, the Auslander--Reiten quiver of the module category \(\mathrm{mod}(\Gamma)\), the projective-injective objects represented by black nodes. On the right, the Auslander--Reiten quiver of the category of \(3\)-term complexes up to homotopy in \(\cat{A}\).}
		\end{figure}
	\end{example}

	\section{Cluster-tilting subcategories of triangulated categories}\label{triangulated}
	
	A class of \(d\)-Auslander extriangulated categories comes from triangulated categories \(\cat{T}\) with a \((d+2)\)-cluster-tilting subcategory \(\cat{M}\). Concretely, we consider the category \(\cat{T}_\cat{M}\), whose underlying additive category is \(\cat{T}\), and its extriangulated structure is the relative structure making \(\cat{M}\) the subcategory of projective objects. 
	
	\begin{proposition}\label{proposition-CT-d-Auslander}
		The category \(\cat{T}_\cat{M}\) is \(d\)-Auslander.
	\end{proposition}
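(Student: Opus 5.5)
We verify the three axioms of the definition of \(d\)-Auslander for \(\cat{T}_\cat{M}\), where \(\cat{M}\) is \((d+2)\)-cluster-tilting. Recall that conflations in \(\cat{T}_\cat{M}\) are the triangles \(A \rightarrow B \rightarrow C \xrightarrow{\delta} \Sigma A\) with \(\delta \in \Ext{\cat{M}}(C,A)\), i.e.\ \(\delta\circ f = 0\) for all \(f : M \rightarrow C\) with \(M \in \cat{M}\). Equivalently, \(B \rightarrow C\) is an \emph{\(\cat{M}\)-epic} map: every morphism \(M \rightarrow C\) lifts. Objects of \(\cat{M}\) are projective, and since \(\cat{M}\) is contravariantly finite there are right \(\cat{M}\)-approximations \(M_X \rightarrow X\), which complete to conflations \(X' \inflation M_X \deflation X\). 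Hence \(\cat{T}_\cat{M}\) has enough projectives, and the projectives are exactly \(\add\cat{M} = \cat{M}\).

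\emph{Global dimension.} Starting from \(X_0 = X\), we take successive right \(\cat{M}\)-approximations to get conflations \(X_i \inflation M_i \deflation X_{i-1}\). The claim is that \(X_{d+1} \in \cat{M}\). Applying \(\Hom{\cat{T}}(\cat{M},-)\) to the triangle \(X_i \rightarrow M_i \rightarrow X_{i-1} \rightarrow \Sigma X_i\), and using that \(M_i \rightarrow X_{i-1}\) is \(\cat{M}\)-epic and \(\Hom{\cat{T}}(\cat{M},\Sigma^j M_i) = 0\) for \(1 \leq j \leq d+1\), we get \(\Hom{\cat{T}}(\cat{M},\Sigma X_i) = 0\) and \(\Hom{\cat{T}}(\cat{M},\Sigma^{j+1}X_i) \simeq \Hom{\cat{T}}(\cat{M},\Sigma^j X_{i-1})\) for \(1 \leq j \leq d\). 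By induction, \(\Hom{\cat{T}}(\cat{M},\Sigma^j X_i) = 0\) for \(1 \leq j \leq i\) (and \(j \leq d+1\)). Thus \(\Hom{\cat{T}}(\cat{M},\Sigma^j X_{d+1}) = 0\) for \(1 \leq j \leq d+1\), and the second characterization in the definition of cluster tilting gives \(X_{d+1} \in \cat{M}\). The index shifts here need care, but this is the standard Iyama--Yoshino argument, consistent with \(\cat{T} = \cat{M}\ast\Sigma\cat{M}\ast\cdots\ast\Sigma^{d+1}\cat{M}\).

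\emph{Injectives and projective-injectives.} Dually, in \(\cat{T}_\cat{M}\) a triangle is a conflation if and only if \(\Hom{\cat{T}}(\cat{M},-)\) sends it to a short exact sequence. For \(M \in \cat{M}\) we have
\begin{align*}
\Ext{\cat{M}}(C,\Sigma^{d+1}M) \subseteq \Hom{\cat{T}}(C,\Sigma^{d+2}M),
\end{align*}
and the relevant extensions are killed by \(\cat{M}\). I expect the injectives to be exactly \(\Sigma^{d+1}\cat{M}\), and the projective-injectives to be \(\cat{M}\cap\Sigma^{d+1}\cat{M}\). Rather than proving injectivity of \(\Sigma^{d+1}\cat{M}\) directly, it suffices for the dominant dimension axiom to produce the required conflations and check that the final term is injective. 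Since \(\cat{T}_\cat{M}\) has enough injectives via left \(\Sigma^{d+1}\cat{M}\)-approximations, this is the natural candidate.

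\emph{Dominant dimension (main obstacle).} For \(P \in \cat{M}\), the natural approach is to take the triangle \(\Sigma^{-1}P \rightarrow 0 \rightarrow P \rightarrow P\), or to use the rotation \(P \rightarrow 0 \rightarrow \Sigma P\), and resolve. The trouble is that the middle terms must lie in \(\cat{Q}\), which could be small. If \(\cat{Q}\) is zero, the condition \(X_{d+2}\) injective with all \(Q_i = 0\) forces the conflations to be \(X_i \inflation 0 \deflation \Sigma X_i\). Each of these is a conflation exactly when \(\Hom{\cat{T}}(\cat{M},\Sigma X_i) = 0\), and then \(X_{i+1} = \Sigma X_i\). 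So with \(Q_i = 0\) we get \(X_{i} = \Sigma^{i-1}P\), and we need \(\Hom{\cat{T}}(\cat{M},\Sigma^{i}P) = 0\) for \(1 \leq i \leq d+1\), which holds by cluster tilting. Finally \(X_{d+2} = \Sigma^{d+1}P\) is injective. So the plan is to take all \(Q_i = 0\), verify that each \(\Sigma^{i-1}P \inflation 0 \deflation \Sigma^i P\) is a conflation in \(\cat{T}_\cat{M}\) by the vanishing above, and prove that \(\Sigma^{d+1}\cat{M}\) consists of injectives. That last step is the genuine obstacle: given a conflation \(\Sigma^{d+1}M \inflation B \deflation C\) with \(\cat{M}\)-epic deflation, I would show that the connecting morphism \(\delta : C \rightarrow \Sigma^{d+2}M\) vanishes. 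To do so, filter \(C\) through \(\cat{M}\ast\Sigma\cat{M}\ast\cdots\ast\Sigma^{d+1}\cat{M}\), note that \(\delta\) kills the \(\cat{M}\)-part, and use \(\Hom{\cat{T}}(\Sigma^j\cat{M},\Sigma^{d+2}\cat{M}) = 0\) for \(1 \leq j \leq d+1\) to conclude that \(\delta = 0\) (the corollary after Iyama--Yoshino's theorem gives exactly this factorization).
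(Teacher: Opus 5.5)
Your proof is correct and follows the paper's argument: projective resolutions by iterated right \(\cat{M}\)-approximations ending in \(\cat{M}\) (you spell out the Iyama--Yoshino vanishing induction that the paper leaves implicit), dominant dimension via the conflations \(\Sigma^{i-1}P \inflation 0 \deflation \Sigma^{i}P\) with \(Q_i = 0\), and injectivity of \(\Sigma^{d+1}\cat{M}\) by factoring the connecting morphism through \(\Sigma\cat{M}\ast\cdots\ast\Sigma^{d+1}\cat{M}\), into which \(\Sigma^{d+2}\cat{M}\) receives no nonzero maps. One harmless slip: your remark that conflations are exactly the triangles sent by \(\Hom{\cat{T}}(\cat{M},-)\) to short exact sequences is false, since only right exactness holds, as your own conflation \(P \inflation 0 \deflation \Sigma P\) shows; your assertion that there are enough injectives is also unproved, but neither claim is used.
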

	
	\begin{proof}
		We begin by emphasizing that a triangle \(X \rightarrow Y \rightarrow Z \xrightarrow{\delta} \Sigma X\) in \(\cat{T}\) is a conflation \(X \inflation Y \deflation Z\) in \(\cat{T}_\cat{M}\) if and only if the natural transformation  \((M,Z) \xrightarrow{\delta\circ-} (M,X)\) is zero for all \(M \in \cat{M}\). This is equivalent to \(\delta\) factoring through an object in \(\Sigma\cat{M} \ast \cdots \ast \Sigma^{d+1}\cat{M}\). \par
		We remark that any triangle in \(\cat{T}\) of the form 
		\begin{align*}
			X \rightarrow M \xrightarrow{f} Z \rightarrow \Sigma X
		\end{align*}
		with \(f\) a right \(\cat{M}\)-approximation satisfies this property. \par
		Let \(Z\) be any object in \(\cat{T}\). As \(\cat{M}\) is \((d+2)\)-cluster-tilting, for any \(0 \leq k \leq d\) there is a triangle  
		\begin{align}\label{triangles-resolution}
			Z_{k+1} \rightarrow M_k \xrightarrow{f_k} Z_k \rightarrow \Sigma Z_{k+1}
		\end{align}
		in \(\cat{T}\) such that \(Z_0 = Z\), the morphism \(f_k\) is a right \(\cat{M}\)-approximation for each \(k\), and \(Z_{d+1}\) belongs to \(\cat{M}\). In particular, in \(\cat{T}_\cat{M}\) we obtain the sequence of conflations
		\begin{align*}
			Z_{k+1} \inflation M_k \xdeflation{f_k} Z_k
		\end{align*}
		which amounts to a projective resolution of \(Z\). In particular \(\cat{T}_\cat{M}\) has enough projectives and \(\cat{T}_\cat{M}\) has global dimension at most \(d+1\). By construction a projective resolution of \(\Sigma^{d+1} M\), we see that \(\pd{\Sigma^{d+1} M} = d+1\) for any nonzero \(M \in \cat{M}\), so \(\cat{T}_\cat{M}\) has global dimension \(d+1\). \par
		We now want to show that \(\inj(\cat{T}_\cat{M}) = \Sigma^{d+1} \cat{M}\). Let us consider a conflation \(X \inflation Y \deflation Z\) in \(\cat{T}_\cat{M}\), which by definition comes from a triangle
		\begin{align*}
			\Sigma^{-1} Z \xrightarrow{\theta} X \rightarrow Y \rightarrow Z
		\end{align*}
		in \(\cat{T}\) such that \(\theta\) factors through \(\cat{M} \ast \cdots \ast \Sigma^{d} \cat{M}\). This means that any morphism from \(X\) to an object in \(\Sigma^{d+1} \cat{M}\) must factor through \(Y\), so \(\Sigma^{d+1} \cat{M} \subseteq \inj(\cat{T}_\cat{M})\). We notice that \(X \inflation 0 \deflation \Sigma X\) is a conflation in \(\cat{T}_\cat{M}\) for any \(X \in \cat{M} \ast \cdots \ast \Sigma^{d} \cat{M}\). In particular, any injective object in \(\cat{T}_\cat{M}\) must belong to \(\Sigma^{d+1} \cat{M}\), which establishes  \(\inj(\cat{T}_\cat{M}) = \Sigma^{d+1} \cat{M}\). \par
		Finally, constructing the triangles \(\eqref{triangles-resolution}\) for \(Z\) in \(\Sigma^{d+1} \cat{M}\), we see that \(M_k = 0\) for all \(0 \leq k \leq d\) and so the dominant dimension in \(\cat{T}_\cat{M}\) is \(d+1\). 
		Therefore \(\cat{T}_\cat{M}\) is \(d\)-Auslander.
	\end{proof}
	
	\begin{corollary}\label{corollary-CT-vosnex}
		Let \(\cat{T}\) be an algebraic triangulated category with a \((d+2)\)-cluster-tilting subcategory \(\cat{M}\) which satisfies the vosnex property. Then
		\begin{align*}
			\cat{T}_\cat{M}/[\Sigma^{d+1}\cat{M}\rightarrow \cat{M}] \simeq \cat{K}^{[-d-1,0]}(\cat{M})
		\end{align*}
		as extriangulated categories.
	\end{corollary}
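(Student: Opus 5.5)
We will deduce the statement from \Cref{theorem-ideal-quotients} applied to \(\cat{C} = \cat{T}_\cat{M}\). Three things need to be in place:
\begin{itemize}
\item \(\cat{T}_\cat{M}\) is algebraic;
\item \(\cat{T}_\cat{M}\) is \(d\)-Auslander with the required Ext-vanishing;
\item the resulting category \(\cat{P}/[\cat{Q}]\) is identified with \(\cat{M}\).
\end{itemize}

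\emph{Algebraicity.} Since \(\cat{T}\) is algebraic, it is the stable category of a Frobenius exact category \(\cat{F}\). The relative structure \(\mathbb{E}_\cat{M}\) pulls back to a relative exact substructure on \(\cat{F}\): declare a conflation of \(\cat{F}\) admissible if its image triangle lies in \(\mathbb{E}_\cat{M}\). The projective-injectives of \(\cat{F}\) remain projective-injective for this substructure, and quotienting by them recovers \(\cat{T}_\cat{M}\). Hence \(\cat{T}_\cat{M}\) is algebraic. This is routine, so we only sketch it.

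\emph{The \(d\)-Auslander property and Ext-vanishing.} \Cref{proposition-CT-d-Auslander} gives that \(\cat{T}_\cat{M}\) is \(d\)-Auslander, with \(\cat{P}=\cat{M}\) and \(\cat{I}=\Sigma^{d+1}\cat{M}\). It remains to verify
\[
\Ext{\cat{T}_\cat{M}}^k(\Sigma^{d+1}M,M')=0, \qquad 1\le k\le d,\ M,M'\in\cat{M}.
\]
We compute these higher extensions by dimension shifting along the projective resolution of \(\Sigma^{d+1}M\) built in the proof of \Cref{proposition-CT-d-Auslander}. That resolution has \(M_k=0\) for \(0\le k\le d\), so its syzygies are \(Z_k=\Sigma^{d+1-k}M\), with conflations \(\Sigma^{d-k}M\inflation 0\deflation \Sigma^{d+1-k}M\).

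Shifting along these conflations gives
\[
\Ext{}^k(\Sigma^{d+1}M,M') \simeq \Ext{}^1(\Sigma^{d+2-k}M,M').
\]
From \(\Sigma^{d+1-k}M\inflation 0\deflation\Sigma^{d+2-k}M\) we get \(\Ext{}^1(\Sigma^{d+2-k}M,M')\simeq \Hom{\cat{T}_\cat{M}}(\Sigma^{d+1-k}M,M')\), modulo morphisms factoring through the zero middle term, i.e.\ modulo nothing. Therefore
\[
\Ext{}^k(\Sigma^{d+1}M,M') \simeq \Hom{\cat{T}}(M,\Sigma^{-(d+1-k)}M').
\]
As \(k\) ranges over \(1\le k\le d\), the exponent \(d+1-k\) ranges over \(1,\dots,d\), which is exactly the vosnex range for a \((d+2)\)-cluster-tilting subcategory. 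These groups therefore vanish.

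The main point to check carefully is the identification of \(\Ext{}^1\) in the relative structure. For \(X\inflation 0\deflation \Sigma X\) to be a conflation we need \(X\in\cat{M}\ast\cdots\ast\Sigma^d\cat{M}\), which holds for \(\Sigma^{j}M\) with \(0\le j\le d\). We also need \(\Ext{}^1(\Sigma X, M')\) to equal \(\Hom{\cat{T}}(X,M')\) in full. The latter follows from the long exact sequence, since \(\Ext{}^1(0,M')=0\) and \(\Hom{}(0,M')=0\).

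\emph{Conclusion.} \Cref{theorem-ideal-quotients} now yields
\[
\cat{T}_\cat{M}/[\Sigma^{d+1}\cat{M}\rightarrow\cat{M}]\simeq \hocat{[-d-1,0]}(\cat{M}/[\cat{Q}]).
\]
The projective-injectives are \(\cat{Q}=\cat{M}\cap\Sigma^{d+1}\cat{M}\). An object \(X\) in this intersection satisfies \(\Hom{\cat{T}}(X,\Sigma^{-(d+1)}X)\ne 0\) unless \(X=0\), via the identity of \(\Sigma^{-(d+1)}X\in\cat{M}\). More directly, \(X\in\Sigma^{d+1}\cat{M}\cap\cat{M}\) forces \(\Hom{\cat{T}}(X,\Sigma^{d+1}\Sigma^{-(d+1)}X)=\Hom{\cat{T}}(X,X)\) to vanish by the rigidity of \(\cat{M}\) with \(1\le d+1<d+2\). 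Hence \(X=0\), so \(\cat{Q}=0\) and \(\cat{M}/[\cat{Q}]=\cat{M}\), which gives the claimed equivalence.
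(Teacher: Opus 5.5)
Your proposal is correct and follows the same route as the paper. You apply \Cref{theorem-ideal-quotients} to $\cat{T}_\cat{M}$, with \Cref{proposition-CT-d-Auslander} supplying the $d$-Auslander property ($\cat{P}=\cat{M}$, $\cat{I}=\Sigma^{d+1}\cat{M}$) and the vosnex property supplying the Ext-vanishing.

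You also make explicit three points the paper leaves implicit:
\begin{enumerate}
\item the algebraicity of $\cat{T}_\cat{M}$, via the relative exact structure on a Frobenius model;
\item the dimension-shift identification $\Ext{\cat{T}_\cat{M}}^k(\Sigma^{d+1}M,M')\simeq\Hom{\cat{T}}(M,\Sigma^{-(d+1-k)}M')$;
\item $\cat{Q}=\cat{M}\cap\Sigma^{d+1}\cat{M}=0$, so that $\cat{P}/[\cat{Q}]=\cat{M}$.
\end{enumerate}
For the third point, only your ``more directly'' argument is valid; the sentence before it, invoking $\Hom{\cat{T}}(X,\Sigma^{-(d+1)}X)$, is garbled and should be dropped.
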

	
	\begin{proof}
		By \Cref{proposition-CT-d-Auslander}, the extriangulated category \(\cat{T}_\cat{M}\) is algebraic and \(d\)-Auslander. \(\cat{M}\) is its subcategory of projective objects, and \(\Sigma^{d+1}\cat{M}\) is its subcategory of injective objects. Since \(\cat{M}\) has the vosnex property, condition \((2)\) of \Cref{theorem-ideal-quotients} holds, so 
		\begin{align*}
			\cat{T}_\cat{M}/[\Sigma^{d+1}\cat{M}\rightarrow \cat{M}] \simeq \cat{K}^{[-d-1,0]}(\cat{M})
		\end{align*}
		as extriangulated categories. 
	\end{proof}
	
	\begin{corollary}\label{corollary-dZCT}
		Let \(\cat{T}\) be an algebraic triangulated category with a \((d+2)\mathbb{Z}\)-cluster-tilting subcategory \(\cat{M}\). Then 
		\begin{align*}
			\cat{T}_\cat{M} \simeq \cat{K}^{[-d-1,0]}(\cat{M})
		\end{align*}
		as extriangulated categories.
	\end{corollary}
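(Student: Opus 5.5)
The plan is to reduce to \Cref{corollary-CT-vosnex}, and then to show that the ideal $[\Sigma^{d+1}\cat{M}\rightarrow\cat{M}]$ is already zero in $\cat{T}_\cat{M}$.

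First, I check that a $(d+2)\mathbb{Z}$-cluster-tilting subcategory has the vosnex property. Take $M,M'\in\cat{M}$ and $1\le i\le d$. Then
\[
\Hom{\cat{T}}(M,\Sigma^{-i}M')=\Hom{\cat{T}}(M,\Sigma^{d+2-i}\Sigma^{-(d+2)}M').
\]
Since $\Sigma^{-(d+2)}M'\in\cat{M}$ and $1\le d+2-i\le d+1$, the $(d+2)$-cluster-tilting condition makes this group vanish. (I read the definition's range $1\le i<d$ as $1\le i\le d+1$ for $(d+2)$-cluster tilting.) So \Cref{corollary-CT-vosnex} applies and gives
\[
\cat{T}_\cat{M}/[\Sigma^{d+1}\cat{M}\rightarrow\cat{M}]\simeq\hocat{[-d-1,0]}(\cat{M}).
\]

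Second, I show the ideal is trivial. By the same argument, any morphism $\Sigma^{d+1}M\rightarrow M'$ lies in
\[
\Hom{\cat{T}}(\Sigma^{d+1}M,M')=\Hom{\cat{T}}(M,\Sigma^{-(d+1)}M')=\Hom{\cat{T}}(M,\Sigma\,\Sigma^{-(d+2)}M'),
\]
which is zero for $d\ge0$. Hence every morphism from an injective object to a projective object of $\cat{T}_\cat{M}$ is zero, so $[\Sigma^{d+1}\cat{M}\rightarrow\cat{M}]=0$. The quotient functor is therefore the identity, and the equivalence holds for $\cat{T}_\cat{M}$ itself.

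The obstacle I expect is not mathematical but in matching statements. \Cref{theorem-ideal-quotients} identifies $\cat{A}$ with $\cat{P}/[\cat{Q}]=\cat{M}/[\cat{M}\cap\Sigma^{d+1}\cat{M}]$. To get $\hocat{[-d-1,0]}(\cat{M})$ rather than a quotient of it, I need $\cat{Q}=0$. If $X\in\cat{M}\cap\Sigma^{d+1}\cat{M}$, then $\Sigma^{-(d+1)}X\in\cat{M}$, so $\Sigma^{-(d+1)}X\in\cat{M}\cap\Sigma^{-(d+1)}\cat{M}=\cat{M}\cap\Sigma\cat{M}$ using $\Sigma^{d+2}\cat{M}=\cat{M}$. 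This intersection is zero, since $\Hom{\cat{T}}(\Sigma N,\Sigma N)=\Hom{\cat{T}}(N,\Sigma^{-1}\cdot\Sigma\cdot\Sigma N)$-type vanishing forces $\mathrm{id}_{\Sigma N}=0$. Therefore $\cat{Q}=0$ and $\cat{A}\simeq\cat{M}$.

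I would also check briefly that the algebraicity hypothesis needed for \Cref{theorem-ideal-quotients} holds for the relative structure on an algebraic $\cat{T}$. The proof of \Cref{corollary-CT-vosnex} already assumes this, so I expect it to carry over.
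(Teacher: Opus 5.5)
Your proposal is correct and is essentially the paper's proof. The $(d+2)\mathbb{Z}$ condition gives the vosnex property, so \Cref{corollary-CT-vosnex} applies, and $\Hom{\cat{T}}(\Sigma^{d+1}M,M')\cong\Hom{\cat{T}}(M,\Sigma\,\Sigma^{-(d+2)}M')=0$ shows $[\Sigma^{d+1}\cat{M}\rightarrow\cat{M}]=0$. Your extra check that $\cat{Q}=0$ is already implicit in the statement of \Cref{corollary-CT-vosnex}; indeed $\cat{M}\cap\Sigma^{d+1}\cat{M}=0$ for any $(d+2)$-cluster-tilting $\cat{M}$. Your written justification that $\cat{M}\cap\Sigma\cat{M}=0$ is garbled, though: the right reason is that for $Y\cong\Sigma N$ with $Y,N\in\cat{M}$ one has $\mathrm{id}_Y\in\Hom{\cat{T}}(Y,\Sigma N)=0$.
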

	
	\begin{proof}
		Obviously \(\cat{M}\) satisfies the vosnex property, so \Cref{corollary-CT-vosnex} implies 
		\begin{align*}
			\cat{T}_\cat{M}/[\Sigma^{d+1}\cat{M}\rightarrow \cat{M}] \simeq \cat{K}^{[-d-1,0]}(\cat{M}).
		\end{align*}
		But \(\cat{M}\) being \((d+2)\mathbb{Z}\)-cluster-tilting implies \([\Sigma^{d+1}\cat{M}\rightarrow \cat{M}] = 0\), and the claim follows.
	\end{proof}
	
	\begin{example}
		Let \(\Lambda\) be the path algebra of the linearly oriented quiver of type \(A_{d+3}\) over a field \(k\). Combining Proposition 2.1 in \cite{HappelSeidel} and Theorem 1.22 in \cite{Iyama11}, we see that \(\cat{T}\coloneq \cat{D}^b(\Lambda)\) contains a \((d+2)\mathbb{Z}\)-cluster-tilting subcategory \(\cat{U}\), which can be described as follows. Denote by \(P\) the simple projective object in \(\mathrm{mod}(\Lambda)\), then \(\cat{U} = \{(\tau\Sigma)^i P \mid i \in \mathbb{Z}\}\), where \(\tau\) is the Auslander-Reiten translation. Thus \eqref{corollary-dZCT} implies that there is an extriangulated equivalence
		\begin{align*}
			\cat{T}_\cat{U} \simeq \cat{K}^{[-d-1,0]}(\cat{U}). 
		\end{align*}
		\begin{figure}[ht]
			\begin{center}
				\def\svgwidth{0.80\textwidth}
				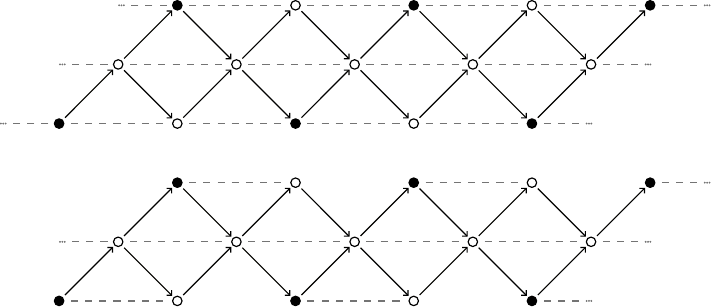
			\end{center}
			\caption{The case \(d=0\). On the top, the Auslander--Reiten quiver of the derived category \(\cat{D}^b(\Lambda)\), the subcategory \(\cat{U}\) represented by black nodes. On the bottom, the Auslander--Reiten quiver of the category of \(2\)-term complexes up to homotopy in \(\cat{U}\).}
		\end{figure}
	\end{example}
	
	We can now give a solution to a question raised by Iyama in \cite{Yang}.
	
	\begin{corollary}\label{cor-angulated}
		Let \(\cat{N}\) be a weakly idempotent complete algebraic \((d+4)\)-angulated category. Then \(\hocat{[-d-1,0]}(\cat{N})\) has a structure of a triangulated category. 
	\end{corollary}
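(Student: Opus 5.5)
The plan is to realize \(\cat{N}\) as a \((d+2)\mathbb{Z}\)-cluster-tilting subcategory of an algebraic triangulated category and then apply \Cref{corollary-dZCT}. Theorem 7.5 of \cite{Kvamme} should supply exactly such an embedding. Write \(n = d+4\), so that \(n-2 = d+2\). The result says that a weakly idempotent complete algebraic \(n\)-angulated category \(\cat{N}\) is equivalent, as an \(n\)-angulated category, to an \((n-2)\mathbb{Z}\)-cluster-tilting subcategory \(\cat{M}\) of an algebraic triangulated category \(\cat{T}\). Here the \(n\)-angulated suspension of \(\cat{N}\) corresponds to \(\Sigma^{n-2} = \Sigma^{d+2}\) restricted to \(\cat{M}\). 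The first step is to check that the hypotheses used there are precisely weak idempotent completeness and algebraicity, so that we obtain an additive equivalence \(\cat{N} \simeq \cat{M}\) with \(\cat{M} \subseteq \cat{T}\) being \((d+2)\)-cluster-tilting and \(\Sigma^{d+2}\cat{M} = \cat{M}\).

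Next, \Cref{corollary-dZCT} applies to the pair \((\cat{T},\cat{M})\) and gives an equivalence of extriangulated categories \(\cat{T}_\cat{M} \simeq \hocat{[-d-1,0]}(\cat{M})\). Since \(\hocat{[-d-1,0]}(-)\) depends only on the additive structure, we have \(\hocat{[-d-1,0]}(\cat{M}) \simeq \hocat{[-d-1,0]}(\cat{N})\), and therefore \(\hocat{[-d-1,0]}(\cat{N})\) is equivalent as an additive category to \(\cat{T}\).

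The final step transports the triangulated structure of \(\cat{T}\) along this additive equivalence \(\hocat{[-d-1,0]}(\cat{N}) \simeq \cat{T}\). The only point needing care is that the equivalence in \Cref{corollary-dZCT} is an equivalence of the underlying additive categories, because the underlying additive category of \(\cat{T}_\cat{M}\) is \(\cat{T}\) itself. The statement only asks for \emph{a} triangulated structure, so transport of structure suffices and no compatibility with the natural extriangulated structure is required.

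The main obstacle is matching conventions with \cite{Kvamme}. There are two things to check: that the \(n\)-angulated category there is realized with cluster-tilting index \(n-2\) rather than \(n\), and that ``algebraic'' there is the same notion as here, so that \(\cat{T}\) is algebraic as \Cref{corollary-dZCT} requires. If Kvamme's \(\cat{T}\) is only known to be triangulated, I would argue that it is built as a stable category of a Frobenius exact category, which is algebraic by construction.
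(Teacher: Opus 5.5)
Your proposal is correct and follows the paper's proof: Theorem 7.5 of \cite{Kvamme} realizes \(\cat{N}\), up to additive equivalence, as a \((d+2)\mathbb{Z}\)-cluster-tilting subcategory \(\cat{M}\) of an algebraic triangulated category \(\cat{T}\), and \Cref{corollary-dZCT} then gives \(\hocat{[-d-1,0]}(\cat{N}) \simeq \hocat{[-d-1,0]}(\cat{M}) \simeq \cat{T}_\cat{M}\), whose underlying additive category is \(\cat{T}\). Your remark that one transports the triangulated structure of \(\cat{T}\) along this additive equivalence, rather than claiming that the natural extriangulated structure is itself triangulated, spells out the step the paper leaves implicit in its last line.
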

	
	\begin{proof}
		Theorem 7.5 in \cite{Kvamme} implies that \(\cat{N}\) is additively equivalent to a \((d+2)\mathbb{Z}\)-cluster-tilting subcategory \(\cat{M}\) of an algebraic triangulated category \(\cat{T}\). Then \Cref{corollary-dZCT} implies
		\begin{align*}
			\hocat{[-d-1,0]}(\cat{N}) \simeq \hocat{[-d-1,0]}(\cat{M}) \simeq \cat{T}_\cat{M},
		\end{align*}
		so \(\hocat{[-d-1,0]}(\cat{N})\) has a structure of a triangulated category.
	\end{proof}
	
	Using the above result, we can give a generalization to Theorem A.1 of \cite[Appendix A]{Yang}.
	
	\begin{theorem}
		Let \(k\) be a commutative ring. Let \(\cat{T}\) be an algebraic, idempotent complete, \(k\)-linear, Hom-finite, locally finite triangulated category. Let \(\cat{M}\) be a \((d+2)\)-cluster-tilting subcategory of \(\cat{T}\), which satisfies the vosnex property. Then the following conditions are equivalent.
		\begin{enumerate}
			\item \(\hocat{[-d-1,0]}(\cat{M})\) has a structure of a triangulated category.
			\item The category \(\mo \cat{M}\) is Frobenius. 
			\item \(\cat{M}\) is \((d+2)\mathbb{Z}\)-cluster-tilting in \(\cat{T}\).
			\item \(\cat{M}\) has a structure of a \((d+4)\)-angulated category. 
		\end{enumerate}
	\end{theorem}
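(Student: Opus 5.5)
We will prove the cycle of implications \((3)\Rightarrow(1)\Rightarrow(2)\Rightarrow(3)\), and treat \((3)\Leftrightarrow(4)\) separately, following the pattern of Theorem A.1 of \cite[Appendix A]{Yang} but replacing its \(d=0\) ingredients by \Cref{corollary-CT-vosnex} and \Cref{corollary-dZCT}.

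\emph{\((3)\Rightarrow(1)\) and \((3)\Leftrightarrow(4)\).} If \(\cat{M}\) is \((d+2)\mathbb{Z}\)-cluster-tilting, then \Cref{corollary-dZCT} gives \(\cat{T}_\cat{M}\simeq\hocat{[-d-1,0]}(\cat{M})\) as extriangulated categories. Since \(\Sigma^{d+2}\cat{M}=\cat{M}\), the relative extension \(\mathbb{E}_\cat{M}\) should coincide with \(\Hom{\cat{T}}(-,\Sigma -)\) on morphisms. The point is that \(\Sigma^{d+1}\cat{M}=\Sigma^{-1}\cat{M}\) kills \(\Hom{\cat{T}}(\cat{M},-)\) up to the small shifts controlled by cluster tilting. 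Rather than verifying this directly, we will use the more robust route via (4). By Geiss--Keller--Oppermann, a \((d+2)\mathbb{Z}\)-cluster-tilting \(\cat{M}\) in a triangulated category carries a \((d+4)\)-angulated structure, which gives \((3)\Rightarrow(4)\). Conversely, \((4)\Rightarrow(1)\) is exactly \Cref{cor-angulated}, using that \(\cat{M}\) is idempotent complete (it is a functorially finite subcategory of an idempotent complete category closed under summands) and algebraic (it is inherited from \(\cat{T}\)). For \((4)\Rightarrow(3)\) within \(\cat{T}\), we expect to argue via (2) instead, as below.

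\emph{\((1)\Rightarrow(2)\).} Suppose \(\hocat{[-d-1,0]}(\cat{M})\) is triangulated, and consider its natural extriangulated structure, which is \(d\)-Auslander with projectives \(\cat{M}\) (in degree \(0\)) and injectives \(\Sigma^{d+1}\cat{M}\). A triangulated structure forces every object to be projective-injective for the triangulated \(\mathbb{E}\). Comparing the two structures on the same additive category (the triangulated one must restrict to the standard one, or at least share its Hom-functors), we obtain that \(\mo\cat{M}\simeq \mo \underline{\hocat{[-d-1,0]}(\cat{M})}\)-type functor categories become Frobenius. Concretely, we use the standard fact that for a triangulated category \(\cat{U}\), \(\mo\cat{U}\) is Frobenius, together with the Yoneda-type identification of \(\mo\cat{M}\) with finitely presented functors on the projectives of the \(d\)-Auslander category. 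This step is the \textbf{main obstacle}: one must justify that an abstract triangulated structure on \(\hocat{[-d-1,0]}(\cat{M})\) makes \(\mo\cat{M}\) Frobenius. We will try to mimic Iyama's \(d=0\) argument. First, \(\mo \hocat{[-d-1,0]}(\cat{M})\) is Frobenius. Then restrict along the inclusion \(\cat{M}\hookrightarrow \hocat{[-d-1,0]}(\cat{M})\), using the recollement-like relation between functors on \(\cat{M}\) and on the truncated homotopy category; here Hom-finiteness and local finiteness give Krull--Schmidt and duality.

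\emph{\((2)\Rightarrow(3)\).} Here we follow the \(d=0\) proof, in which Frobenius \(\mo\cat{M}\) forces \(\Sigma^{d+2}\cat{M}=\cat{M}\) using the vosnex property. For \(M\in\cat{M}\), resolve \(\Sigma^{d+2}M\) by the \(\cat{M}\)-triangles \eqref{triangles-resolution}. Applying \(\Hom{\cat{T}}(\cat{M},-)\) then yields a projective resolution in \(\mo\cat{M}\) of \(\Hom{\cat{T}}(\cat{M},\Sigma^{d+2}M)\) of length \(d+1\). By vosnex and cluster tilting, this module is also the cokernel of an injective copresentation. Frobenius then makes the resolution split, and Krull--Schmidt gives \(\Sigma^{d+2}M\in\cat{M}\). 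The same argument with \(\Sigma^{-d-2}\) gives equality, which is (3).
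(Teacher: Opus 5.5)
Your overall cycle matches the paper's: (3)$\Rightarrow$(4) by Geiss--Keller--Oppermann, (4)$\Rightarrow$(1) by \Cref{cor-angulated}, then (1)$\Rightarrow$(2)$\Rightarrow$(3). However, the two substantive implications are not actually proved.

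\textbf{(1)$\Rightarrow$(2).} You flag this step yourself as the main obstacle, and the heuristics you offer do not work. An abstract triangulated structure on \(\hocat{[-d-1,0]}(\cat{M})\) has no relation to its natural \(d\)-Auslander extriangulated structure, so nothing ``restricts to the standard one''. All you get for free are statements about the additive category, for example that \(\mo\hocat{[-d-1,0]}(\cat{M})\) is Frobenius.

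Restriction along \(\cat{M}\hookrightarrow\hocat{[-d-1,0]}(\cat{M})\) does not transport this. It sends the representable functor of the complex \(M_1\xrightarrow{f}M_0\) (in degrees \(-1,0\)) to \(\mathrm{coker}\,\Hom{\cat{M}}(-,f)\). So under (1) it can only preserve injectives when every finitely presented \(\cat{M}\)-module is injective, i.e.\ when \(\mo\cat{M}\) is semisimple.

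The paper supplies the missing ingredient through Auslander--Reiten theory, which only sees the additive category. From a minimal projective resolution of a non-projective simple \(S\in\mo\cat{M}\) it writes down the almost split triangle ending at \(P^0\) in \(\hocat{[-d-1,0]}(\cat{M})\). It then passes to \(\hocat{[-d-1,0]}(\cat{M}^{\mathrm{op}})\simeq\hocat{[-d-1,0]}(\cat{M})^{\mathrm{op}}\), which is also triangulated, and reads off exactness of the dualized resolution \((P^0,-)\to\cdots\to(P^{-d-3},-)\). This gives \(\Extg{\mo\cat{M}}^1(S,(-,P))=0\) for every simple \(S\), and the length-category hypothesis then makes \((-,P)\) injective.

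Your restriction idea can be rescued, but only through an explicit computation the proposal does not contain:
\begin{enumerate}
\item For \(f:M_1\to M_0\) in \(\cat{M}\), the complex \(M_{d+3}\to\cdots\to M_2\) of iterated weak kernels in \(\cat{M}\), mapped to \(M_1\) in degree \(0\), is a weak kernel of \(f\) in \(\hocat{[-d-1,0]}(\cat{M})\).
\item In any triangulated structure, \(\Hom{}(-,P)\) is exact on weak-kernel sequences (compare with the cocone of \(f\)).
\item For \(P\in\cat{M}\) this gives exactness of \(\Hom{\cat{M}}(M_0,P)\to\Hom{\cat{M}}(M_1,P)\to\Hom{\cat{M}}(M_2,P)\), i.e.\ \(\Extg{\mo\cat{M}}^1(-,(-,P))=0\).
\item The same argument over \(\cat{M}^{\mathrm{op}}\) gives enough injectives.
\end{enumerate}

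\textbf{(2)$\Rightarrow$(3).} This also breaks at the decisive step. Suppose you show \(F(\Sigma^{d+2}M)=\Hom{\cat{T}}(-,\Sigma^{d+2}M)|_{\cat{M}}\) is projective, either via finite projective dimension in a Frobenius category or, as in the paper, via Beligiannis' identification \(\mathrm{inj}(\mo\cat{M})=F(\Sigma^{d+2}\cat{M})\). You then only know \(F(\Sigma^{d+2}M)\simeq F(P')\) in \(\mo\cat{M}\). But \(F\) kills the ideal \([\Sigma\cat{M}\ast\cdots\ast\Sigma^{d+1}\cat{M}]\), so a splitting in \(\mo\cat{M}\) says nothing yet about \(\cat{T}\), and Krull--Schmidt does not bridge this gap.

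The paper proceeds as follows:
\begin{enumerate}
\item Realize the isomorphism by some \(g:P'\to\Sigma^{d+2}P\).
\item Complete it to a triangle \(X\xrightarrow{f}P'\xrightarrow{g}\Sigma^{d+2}P\to\Sigma X\).
\item Deduce \(\Hom{\cat{T}}(\cat{M},\Sigma X)=0\), hence \(X\in\cat{M}\ast\cdots\ast\Sigma^{d}\cat{M}\).
\item Only then use vosnex to force \(f=0\).
\end{enumerate}
That is where vosnex genuinely enters. In your sketch its role (``cokernel of an injective copresentation'') is unexplained.

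\textbf{Smaller points.}
\begin{itemize}
\item (3)$\Rightarrow$(1) is immediate from \Cref{corollary-dZCT}, since the underlying additive category of \(\cat{T}_\cat{M}\) is the triangulated category \(\cat{T}\). Your aside that \(\mathbb{E}_\cat{M}\) agrees with \(\Hom{\cat{T}}(-,\Sigma-)\) is false: \(\mathbb{E}_\cat{M}(M,\Sigma^{-1}M)=0\) for \(M\in\cat{M}\), while \(\Hom{\cat{T}}(M,M)\neq 0\).
\item For (4)$\Rightarrow$(1), algebraicity of an arbitrary \((d+4)\)-angulated structure on \(\cat{M}\) is not ``inherited from \(\cat{T}\)''. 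As in the paper, (4) must be read as asserting an algebraic structure for \Cref{cor-angulated} to apply.
\end{itemize}
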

	
	We note that the assumptions on \(\cat{T}\) imply that the categories \(\cat{M}\), \(\mo \cat{M}\) and \(\hocat{[-d-1,0]}(\cat{M})\) are Krull-Schmidt. Moreover, \(\mo \cat{M}\) is an abelian length category.
	
	\begin{proof}
		We prove \((1) \implies (2)\). This is an adaptation of the proof of Theorem A.2 of \cite[Appendix A]{Yang} . We remark that \(\hocat{[-d-1,0]}(\cat{M}^\mathrm{op})\) admits a triangulated structure, since
		\begin{align*}
			\hocat{[-d-1,0]}(\cat{M}^\mathrm{op}) \simeq \hocat{[-d-1,0]}(\cat{M})^\mathrm{op},
		\end{align*}
		and the opposite category of a triangulated category is triangulated. Denote the shift functor for these triangulated structures by \(\tilde{\Sigma}\). \par
		Let \(S\) be a non-projective simple object in \(\mo \cat{M}\) and let 
		\begin{align*}
			(-,P^{-d-3}) \xrightarrow{f^{-d-3}\circ-} (-,P^{-d-2}) \xrightarrow{f^{-d-2}\circ-} \cdots \xrightarrow{f^{-1}\circ-} (-,P^0) \deflation S
		\end{align*}
		be a minimal projective resolution. By Lemmas A.4-A.7 of \cite[Appendix A]{Yang}  (which generalize immediately to our setting), we have the following almost split triangle in \(\hocat{[-d-1,0]}(\cat{M})\)
		\begin{align*}
			\Sigma^{d+3} P^{-d-3} \xrightarrow{\tilde{f}^{-d-3}} C \xrightarrow{\tilde{f}^{-1}} {P^{0}} \rightarrow \tilde{\Sigma}\Sigma^{d+3} P^{-d-3},
		\end{align*}
		where \(C\) is the complex \(P^{-d-2} \xrightarrow{f^{-d-2}} \cdots \xrightarrow{f^{-2}} P^{-2}\), and the morphisms \(\tilde{f}^{-d-3}\) and \(\tilde{f}^{-1}\) are given by
		\begin{center}
		\begin{tikzcd}
			{P^{-d-3}} & {P^{-d-2}} & 0 \\
			\vdots & \vdots & \vdots \\
			0 & {P^{-1}} & {P^0}
			\arrow["{f^{-d-3}}", from=1-1, to=1-2]
			\arrow[from=1-1, to=2-1]
			\arrow[from=1-2, to=1-3]
			\arrow["{f^{-d-2}}", from=1-2, to=2-2]
			\arrow[from=1-3, to=2-3]
			\arrow[from=2-1, to=3-1]
			\arrow["{f^{-2}}", from=2-2, to=3-2]
			\arrow[from=2-3, to=3-3]
			\arrow[from=3-1, to=3-2]
			\arrow["{f^{-1}}", from=3-2, to=3-3]
		\end{tikzcd}.
		\end{center}
		By duality, we obtain an almost split sequence in \(\hocat{[-d-1,0]}(\cat{M}^\mathrm{op})\), which by Lemma A.5 of \cite{Yang} gives the exact sequence
		\begin{align*}
			(P^{0},-) \xrightarrow{-\circ f^{-1}} \cdots \xrightarrow{-\circ f^{-d-2}} (P^{-d-2},-) \xrightarrow{-\circ f^{-d-3}}(P^{-d-3},-)
		\end{align*}
		in \(\mo \cat{M}^\mathrm{op}\). In particular, we see that \(\Extg{\cat{M}}^1(S,(-,P)) = 0\) for any \(P\) in \(\cat{M}\) and simple \(S\) in \(\mo \cat{M}\). Since \(\mo \cat{M}\) is a length category, this implies that \((-,P)\) is an injective object, hence \(\mo \cat{M}\) is Frobenius. \par
		The implication \((2) \implies (3)\) is basically Proposition 3.6 in \cite{IO13}, we give here a short proof that does not depend on the existence of a Serre functor on \(\cat{T}\). Let \(F : \cat{T} \rightarrow \mo \cat{M}\) be the functor defined by \(F(T) = (-,T)\mid_{\cat{M}}\), which is easily verified to be a full and essentially surjective functor. According to Theorem 5.3 of \cite{BELIGIANNIS}, we have
		\begin{align*}
			\mathrm{proj}(\mo \cat{M}) = F(\cat{M}) \qquad \text{and} \qquad \mathrm{inj}(\mo \cat{M}) = F(\Sigma^{d+2}\cat{M}).
		\end{align*}
		Since \(\mo \cat{M}\) is Frobenius, for any \(P \in \cat{M}\), there exists an object \(P'\) such that
		\begin{align*}
			(-,\Sigma^{d+2}P)\mid_{\cat{M}} \simeq (-,P')\mid_{\cat{M}}.
		\end{align*}
		We thus obtain a morphism \(g : P' \rightarrow \Sigma^{d+2}P\) in \(\cat{T}\) which induces the above isomorphism. We complete \(g\) to a triangle in \(\cat{T}\)
		\begin{align*}
			X \xrightarrow{f} P' \xrightarrow{g} \Sigma^{d+2}P \xrightarrow{h} \Sigma X.
		\end{align*}
		Applying \(\Hom{\cat{T}}(\tilde{P},-)\) to this triangle we obtain the exact sequence
		\begin{align*}
			\Hom{\cat{T}}(\tilde{P},P') \xrightarrow{\sim} \Hom{\cat{T}}(\tilde{P},\Sigma^{d+2}P) \rightarrow \Hom{\cat{T}}(\tilde{P},\Sigma X) \rightarrow \Hom{\cat{T}}(\tilde{P},\Sigma P') = 0.
		\end{align*}
		So \(\Hom{\cat{T}}(\tilde{P},\Sigma X) = 0\) for all \(\tilde{P} \in \cat{M}\), which implies that \(\Sigma X \in \Sigma\cat{M}\ast\cdots \ast \Sigma^{d+1}\cat{M}\). In turn, it follows that \(X \in \cat{M}\ast\cdots \ast \Sigma^{d}\cat{M}\). As \(\cat{M}\) satisfies the vosnex property, the morphism \(f:X \rightarrow P'\) has to vanish. In particular \(P'\) is a summand of \(\Sigma^{d+2}P\), which concludes our proof. In particular, \(\cat{M} \subseteq \Sigma^{d+2} \cat{M}\), which shows \(\cat{M}\) is \((d+2)\mathbb{Z}\)-cluster-tilting. \par
		The implication \((3) \implies (4)\) is Theorem 1 of \cite{GKO}. \par
		Finally, \((4) \implies (1)\) follows from \Cref{cor-angulated}.
	\end{proof}
	
	\section{Connection to higher Auslander algebras}\label{higher}
	
	Let \(k\) be a field and \(\Gamma\) be a finite dimensional \(k\)-algebra. Assume \(\Gamma\) is \(d\)-Auslander and satisfies the equivalent conditions of \Cref{theorem-ideal-quotients}, that is, \(\Extg{\Gamma}^k(D\Gamma,\Gamma) = 0\) for \(1 \leq k \leq d\). Then according to Theorem 1.20 of \cite{Iyama11}, this is equivalent to the existence of a finite dimensional \(k\)-algebra \(\Lambda\) with \(\mathrm{gl.dim}\ \Lambda \leq d\) and a \(d\)-cluster-tilting object \(M\) in \(\mo \Lambda\) such that \(\Gamma \simeq \mathrm{End}_\Lambda(M)\). In this setting, \Cref{theorem-ideal-quotients} has the following immediate consequence.
	
	\begin{corollary}
		We have an equivalence of extriangulated categories
		\begin{align*}
			\mathrm{mod}(\Gamma) / [D\Gamma \rightarrow \Gamma] \simeq \hocat{[-d-1,0]}(\add(M)/[\mathrm{inj}(M)]).
		\end{align*}
	\end{corollary}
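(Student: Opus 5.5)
This is a direct application of \Cref{theorem-ideal-quotients} to the abelian (hence exact, hence algebraic extriangulated) category \(\cat{C} = \mathrm{mod}(\Gamma)\), so the plan is to check hypothesis (2) and then identify the category \(\cat{A} \simeq \cat{P}/[\cat{Q}]\) concretely.

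\emph{Hypotheses.} \(\mathrm{mod}(\Gamma)\) is an abelian category with enough projectives. The projectives are \(\cat{P} = \add(\Gamma)\) and the injectives are \(\cat{I} = \add(D\Gamma)\). Since \(\Gamma\) is \(d\)-Auslander (global dimension \(\leq d+1\), dominant dimension \(\geq d+1\)), the category \(\mathrm{mod}(\Gamma)\) is \(d\)-Auslander in the extriangulated sense. The vanishing \(\Extg{\Gamma}^k(D\Gamma,\Gamma)=0\) for \(1\le k\le d\) is assumed. Hence \Cref{theorem-ideal-quotients} gives
\begin{align*}
\mathrm{mod}(\Gamma)/[D\Gamma\rightarrow\Gamma] \simeq \hocat{[-d-1,0]}(\add(\Gamma)/[\cat{Q}]),
\end{align*}
where \(\cat{Q}\) is the subcategory of projective-injective \(\Gamma\)-modules.

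\emph{Identifying \(\cat{A}\).} It remains to show that \(\add(\Gamma)/[\cat{Q}] \simeq \add(M)/[\mathrm{inj}(M)]\), where \(\mathrm{inj}(M)\) denotes the injective \(\Lambda\)-modules, all of which lie in \(\add(M)\) because \(M\) is \(d\)-cluster-tilting. The steps are as follows.
\begin{enumerate}
\item By Yoneda, the functor \(\Hom{\Lambda}(M,-)\) restricts to an additive equivalence \(\add(M) \simeq \add(\Gamma) = \proj(\Gamma)\).
\item Under this equivalence, the projective-injective \(\Gamma\)-modules correspond exactly to \(\add(D\Lambda)\). This is the standard part of Iyama's higher Auslander correspondence (Theorem 1.20 of \cite{Iyama11}). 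Concretely, \(\Hom{\Lambda}(M,I)\) is injective for \(I\) an injective \(\Lambda\)-module, since \(\Hom{\Lambda}(M,D\Lambda)\simeq DM \simeq D\Gamma e\) for the appropriate idempotent. Conversely, the dominant dimension argument shows that every projective-injective \(\Gamma\)-module arises this way.
\item Equivalent additive categories with corresponding ideals generated by corresponding full subcategories have equivalent ideal quotients. This gives the claimed equivalence of \(\cat{A}\).
\end{enumerate}

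\emph{Main obstacle.} The only non-formal point is step 2, the identification \(\cat{Q} \leftrightarrow \mathrm{inj}(\Lambda)\). I would quote it from \cite{Iyama11} rather than reprove it. Everything else is immediate from \Cref{theorem-ideal-quotients}.
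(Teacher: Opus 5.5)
Your proposal is correct and matches the paper, which treats the corollary as an immediate application of \Cref{theorem-ideal-quotients} to \(\mathrm{mod}(\Gamma)\). As in your plan, \(\cat{A}\simeq\proj(\Gamma)/[\cat{Q}]\) is identified with \(\add(M)/[\mathrm{inj}(M)]\) through \(\Hom{\Lambda}(M,-)\) and Iyama's correspondence. Your step 2 does not need dominant dimension: if \(X\in\add(M)\) and \(\Hom{\Lambda}(M,X)\) is injective, then applying \(\Hom{\Lambda}(M,-)\) to a monomorphism \(X\to I\) with \(I\) injective (so \(I\in\add(M)\)) gives a split monomorphism. Since \(\Hom{\Lambda}(M,-)\) is fully faithful on \(\add(M)\), the splitting lifts, so \(X\) is a summand of \(I\).
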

	
	In particular, there is a bijection between \((d+1)\)-cluster-tilting subcategories of \(\mathrm{mod}(\Gamma)\) and \(\hocat{[-d-1,0]}(\add(M)/[\mathrm{inj}(M)])\). But according to \cite{Iyama11}, there is a \((d+1)\)-cluster-tilting subcategory \(\cat{M}\) in \(\mathrm{mod}(\Gamma)\) if and only if \(\Gamma\) is a higher Auslander algebra of type \(A\), which in this case is unique. In particular, we obtain an equivalence of extriangulated categories
	\begin{align*}
		\mathrm{mod}(A_{n+1}^{d+1}) / [DA_{n+1}^{d+1} \rightarrow A_{n+1}^{d+1}] \simeq \hocat{[-d-1,0]}(\mathrm{add}(M^{d}_{n+1})/[\mathrm{inj}(M^{d}_{n+1})]).
	\end{align*}
	But \(\mathrm{add}(M^{d}_{n+1})/[\mathrm{inj}(M^{d}_{n+1})] \simeq \mathrm{add}(M^{d}_{n}) \simeq \mathrm{proj}(A^{d+1}_{n})\), so we get
	\begin{align*}
		\mathrm{mod}(A_{n+1}^{d+1}) / [DA_{n+1}^{d+1} \rightarrow A_{n+1}^{d+1}] \simeq \hocat{[-d-1,0]}(\mathrm{proj}(A^{d+1}_{n})).
	\end{align*}
	Restricting this equivalence to the unique \((d+1)\)-cluster-tilting subcategories on each side, we obtain
	\begin{align}\label{equivalence-GW}
		\mathrm{add}(M^{d+1}_{n+1})/[DA_{n+1}^{d+1} \rightarrow A_{n+1}^{d+1}] \simeq \tilde{\cat{P}}_{d+1}(\add({M}_n^{d+1})),
	\end{align}
	where \(\tilde{\cat{P}}_{d+1}(\add({M}_n^{d+1}))\) is the category in Theorem 7.3 of \cite{AHJKPT}. We notice that \eqref{equivalence-GW} is exactly the equivalence of Theorem 3.1 of \cite{GW}. The discussion above suggests that this equivalence is a special phenomenon that is unique to the higher Auslander algebras of type \(A\). 
	
	\section*{References}
	\addcontentsline{toc}{section}{References}
	\printbibliography[heading=none]

	\bigskip
	\bigskip
	
	Lior Silberberg, \textsc{Université Paris Cité, Sorbonne Université, CNRS, IMJ-PRG, F-75013 Paris, France} \par
	\textit{E-mail address}: \texttt{lior.silberberg@imj-prg.fr}

\end{document}